\theoremstyle{plain}
\newtheorem{thm}{Theorem}[section]
\newtheorem*{thm*}{Theorem}
\newtheorem{prop}[thm]{Proposition}
\newtheorem{cor}[thm]{Corollary}
\newtheorem{lem}[thm]{Lemma}
\theoremstyle{definition}
\newtheorem{defn}[thm]{Definition}
\newtheorem{conj}[thm]{Conjecture}
\newtheorem{expl}[thm]{Example}
\newtheorem{rem}[thm]{Remark}
\newtheoremstyle{named}%
    {}{}{\itshape}{}{\bfseries}{.}{.5em}{\thmnote{#3}}
\theoremstyle{named}
\DeclareMathOperator{\Aut}{Aut}
\DeclareMathOperator{\tor}{tor}
\DeclareMathOperator{\Tr}{Tr}
\DeclareMathOperator{\ur}{ur}
\newcommand{\bA}{\mathbb A}
\newcommand{\cA}{\mathcal A}
\newcommand{\fA}{\mathfrak A}
\newcommand{\bC}{\mathbb C}
\newcommand{\cE}{\mathcal E}
\newcommand{\cF}{\mathcal F}
\newcommand{\bH}{\mathbb H}
\newcommand{\cO}{\mathcal O}
\newcommand{\cP}{\mathcal P}
\newcommand{\fP}{\mathfrak P}
\newcommand{\bR}{\mathbb R}
\newcommand{\bQ}{\mathbb Q}
\newcommand{\bZ}{\mathbb Z}
\DeclareMathOperator{\an}{an}
\DeclareMathOperator{\Lie}{Lie}
\newcommand{\cLie}{\text{\calligra Lie}\,}
\title{Relative monodromy of ramified sections on abelian schemes}
\author{Paolo Dolce \and 
Francesco Tropeano}
\date{}
\newcommand{\Addresses}{{
  \bigskip
  \bigskip
  \footnotesize
}

  P.~Dolce, \textsc{Institute for Theoretical Sciences, Westlake University, China}\par\nopagebreak
  \textit{E-mail address}: \texttt{dolce@westlake.edu.cn}

  \bigskip
  
F.~Tropeano, \textsc{Universit\`a degli Studi Roma Tre, Italy}\par\nopagebreak
  \textit{E-mail address}: \texttt{francesco.tropeano@uniroma3.it}
  
}
\begin{document}

\maketitle

\makeatletter
\@starttoc{toc}
\makeatother

\begin{abstract}
    Let's fix a complex abelian scheme $\cA\to S$ of relative dimension $g$, without fixed part, and having maximal variation in moduli. We show that the relative monodromy group $M^{\textrm{rel}}_\sigma$ of a ramified section $\sigma\colon S\to\mathcal A$ is nontrivial. Moreover, under some hypotheses on the action of the monodromy group $\textrm{Mon}(\mathcal A)$ we show that $M^{\textrm{rel}}_\sigma\cong \mathbb Z^{2g}$. We discuss several examples and applications. For instance we provide a new proof of Manin's kernel theorem and of the algebraic independence of the coordinates of abelian logarithms with respect to the coordinates of periods.
\end{abstract}

\section{Introduction}
\paragraph{Notation and Background.}
We establish the following notation, which will remain fixed throughout this paper. Let \( S \) be a regular, irreducible, quasi-projective variety over \(\mathbb{C}\), and let \(\phi \colon \cA \to S\) be an abelian scheme over \(\mathbb{C}\) of relative dimension \(g\). By definition, this means that \(\cA\) is a proper, smooth group $\mathbb C$-scheme over \(S\) equipped with a zero section \(\sigma_0 \colon S \to \cA\), and whose fibers \(\mathcal{A}_s\) are abelian varieties of dimension \(g\) for every \(s \in S\). The generic fiber of $\mathcal A$ is denoted by $A$ and  is an abelian variety over the complex function field $\mathbb C(S)$. Since any rational section of $\phi\colon\mathcal A\to S$ can be extended to a global section (see for instance \cite[Proposition 6.2]{LiuLor}),  there is a group isomorphism between the rational points $A(\mathbb C(S))$ and $\Sigma(S)$ which denotes the set of (algebraic) sections of $\phi$. We recall that the \emph{fixed part of $A$} (sometimes called the ``constant part") is the image   of the $\overline{\mathbb C(S)}/\mathbb C$-trace of $A\otimes\overline{\mathbb C(S)}$ (see for instance \cite{Con}). The fixed part is essentially the maximal abelian subcheme of $A$ defined over the base field $\mathbb C$, hence ``constant'' when spread over  $S$. We say that the abelian scheme $\mathcal A$ is \emph{without fixed part} if the fixed part of $A$ is $\{0\}$.

We denote with  \(\mathbb{A}_{g,\ell}\) the moduli space of $g$-dimensional principally polarized complex abelian varieties endowed with a level-$\ell$-structure, with $\ell\ge 3$. Since  \(\mathbb{A}_{g,\ell}\) is a fine moduli space, it comes with a universal family \(\mathfrak{A}_{g,\ell} \to \mathbb{A}_{g,\ell}\). To simplify notation, we shall omit the subscript \(\ell\) in what follows, writing instead \(\mathfrak{A}_g \to \mathbb{A}_g\). Furthermore, up to replacing $\mathcal A\to S$ with a base change by a finite \'etale morphism $S'\to S$, by \cite[Proof of Theorem B.1 Devissages $(iv)-(vi)$]{DGH}, we can assume that the abelian scheme carries principal polarization and it has level-$\ell$-structure. Therefore, we fix a modular morphism \(p \colon S \to \mathbb{A}_g\) induced by $\phi$, and denote its image by \(T = p(S)\).

For any complex variety $X$ we denote by $X(\mathbb C)$ the associated analytification in the sense of \cite[Exposé XII]{SGA1}. If $X$ is regular, then $X(\mathbb C)$ is a complex manifold with the associated locally euclidean topology. We denote with $\Sigma^{\an}$ the sheaf over $S(\mathbb C)$ of holomorphic sections of $\phi$ whereas $\Sigma(S)\subset \Sigma^{\an}(S(\mathbb C))$ is the abelian group of (global) algebraic $\mathbb C$-sections. Each fiber $\mathcal A_s(\mathbb C)$ can be analytically identified with a complex torus $\mathbb C^g/\Lambda_s$. Locally, on simply connected subsets  $U\subset S(\mathbb C)$ it is possible to define a holomorphic period map $\mathfrak P: U\to\Lie(\cA)^{2g}$ where $\Lie(\cA)$ is the complex Lie algebra bundle of $\cA(\mathbb C)$. The period map associates to any point $s\in U$ a basis $\mathfrak P(s)$ of the lattice $\Lambda_s$ and in general it  cannot be globally defined on $S(\mathbb C)$. The obstruction on the existence of the analytic continuation of $\mathfrak P$   is measured by the \emph{monodromy group (of $\mathcal A$)}, $\textrm{Mon}(\mathcal A)\subseteq\text{Sp}_{2g}(\mathbb Z)$. The group $\textrm{Mon}(\mathcal A)$ is often addressed also as \emph{monodromy of periods}. Given an algebraic section $\sigma\colon S\to \cA$ of the abelian scheme,  locally on simply connected open subsets $U'\subset S(\mathbb C)$ it is possible to lift $\sigma$ through the exponential map $\exp\colon \Lie(\cA)\to \cA(\mathbb C)$ to a holomorphic map $\log_\sigma\colon U'\to \Lie(\cA)$. This map is called an abelian logarithm of the section $\sigma$. On a common open set $V\subset S(\mathbb C)$ where both $\log_\sigma$ and $\mathfrak P$ are defined, we can express $\log_\sigma(s)\in\mathbb C^{g}$ in terms of the basis $\mathfrak P(s)$ of the lattice $\Lambda_s$. The coordinates form a real-analytic function $\beta_\sigma\colon V\to\mathbb R^{2g}$ which is nowadays called the Betti map (associated to $\sigma$) and that turned out to be of crucial importance in the theory of unlikely intersections. Such map was implicitly considered by Manin in \cite{Man}, to get a proof of Mordell's conjecture in the function field case. The  abelian logarithm $\log_\sigma$ is in general defined only locally, thus it is interesting to measure the obstruction to the existence of a global abelian logarithm: the simultaneous monodromy of the period map and  $\log_\sigma$ gives rise to the \emph{monodromy group of the section}, $M_\sigma\subseteq\textrm{SL}_{2g+1}(\mathbb Z)$.\\

The non-triviality of $\textrm{Mon}(\cA)$ and $M_\sigma$ gives relevant information on the structure of the abelian scheme and on the section, with respect to the fixed part. In fact: no non-zero periods can be globally defined (so in particular  $\textrm{Mon}(\cA)\neq \{0\}$) when the abelian scheme has no fixed part (see \cite[Lemma 5.6]{GH2}). In addition,  if  $M_\sigma =\begin{psmallmatrix} \mathrm{Mon}(\mathcal{A}) & 0 \\ 0 & 1 \end{psmallmatrix}$ then $\sigma$ is contained in the fixed part of $\mathcal A$ (see \Cref{wellDefLog}).

A deeper understanding of monodromy group structures, going beyond simply establishing their non-triviality, is often essential. For example, $\textrm{Mon}(\mathcal{A})\subseteq\textrm{Sp}_{2g}(\mathbb Z)$ measures the ``modular complexity'' of $\phi\colon\mathcal{A}\to S$, and a key question is whether it is \emph{thin}, i.e. of infinite index in $G(\mathbb{Z})$, where $G$ denotes its Zariski closure.  Works such as \cite{PONCELET} and \cite{NORI} exhibit important cases where $\textrm{Mon}(\mathcal{A})$ is thin. While thin monodromy groups appear to be more common in certain settings, non-thin examples also arise (see, e.g., \cite{SARNAK} or \cite[Section 2.1]{Ell} for a discussion).  For a finitely generated subgroup of $\textrm{GL}_n(\mathbb{Z})$, determining its Zariski closure is usually computationally feasible. However, verifying whether the group is thin can be a delicate and highly non-trivial problem.

\paragraph{Main results of this paper.} We investigate the obstruction that prevents the existence of a globally defined abelian logarithm along loops which induce a trivial monodromy action on the period map. This obstruction is measured by the \emph{relative monodromy group} (of the section) $M^{\textrm{rel}}_\sigma\subseteq \mathbb Z^{2g}$.
The group $M_\sigma^\textrm{rel}$ gives relevant information on the section, in particular its non-triviality is supposed be related to the property of $\sigma$ of being non-torsion, and its rank should give information on the minimal group subscheme containing the image of $\sigma$. 

In the case of non-isotrivial elliptic schemes over a curve Corvaja and Zannier proved in \cite{CZ} that when $\sigma$  is a non-torsion  section the group $M^{\textrm{rel}}_\sigma$ is non-trivial and moreover it is isomorphic to $\mathbb Z^2$ (so it has full rank). In the same paper, they provided an example showing that a priori, if we just limit ourselves to group theoretic arguments, the relative monodromy group could be smaller than expected (even trivial) when compared to the relative group arising from Zariski closures of non-relative monodromy groups. Hence, in this setting there are some groups which cannot arise as monodromy groups of sections. We point out that the second author of the present paper in \cite{T2} gives an explicit proof of the results of Corvaja and Zannier. 

In this paper we generalize the work of \cite{CZ} on the relative monodromy $M^{\textrm{rel}}_\sigma$, focusing on ramified sections instead of non-torsion sections. The concept of a \emph{ramified section} is formally defined below. Intuitively, a section $\sigma \colon S \to \cA$ is ramified if it does not arise from a base change of an abelian scheme $\cA' \to S'$ whose modular map is unramified and factors $p$. The main technical difficulty arises when $T$ is non-normal, requiring a more careful definition.

Let $\nu \colon T^\nu \to T$ denote the normalization of $T$, and let $\mathfrak{A}_{g,T^\nu} \to T^\nu$ be the base change of the universal family to $T^\nu$. By the universal property of normalization, the modular map $p \colon S \to T$ factors uniquely through $\nu$, yielding a dominant map $p^\nu \colon S \to T^\nu$. Let $L$ be the maximal unramified extension\footnote{Let $X$ be an integral, normal scheme. Recall that an extension $L$ of $K(X)$ is said unramified if the normalization $X'\to X$ of $X$ in $L$ is an unramified morphism.} of $\mathbb C(T^\nu)$ in $\mathbb C(S)$ and let $S^{\ur}$ be the normalization of $T^\nu$ in $L$.  By the universal property of the normalization (in a field extension) we get a factorization
\begin{equation}\label{eq:pnu}
\begin{tikzcd}
S\arrow{r}{q}\arrow[bend left]{rr}{p_\nu} & S^{\ur} \arrow{r}{q'\;\;} & T^\nu
\end{tikzcd}
\end{equation}
where $q'$ is unramified, $q$ is dominant and $S^{\ur}$ is normal. By taking the fiber product over $T^\nu$ we fix once for all the following commutative diagram
\begin{equation}\label{eq:relNormalization}
	\begin{tikzcd}
		\cA \arrow{d} \arrow{r} & \cA' \arrow{d} \arrow{r} & \fA_{g,T^\nu} \arrow{d}\\
		S \arrow{r}{q} \arrow[bend right, swap]{rr}{p^\nu}& S^{\ur} \arrow{r}{q'} & T^\nu.
	\end{tikzcd}
\end{equation}
\begin{defn}\label{def:ram_sect}
    With the above notation fixed we say that an algebraic section $\sigma:S \to \cA$ is \emph{unramified} (over $S$) if there exists an algebraic  section $\sigma':S^{\ur} \to \cA'$ such that $\sigma=q^*\sigma'$, where 
\begin{eqnarray*}
q^*\sigma'\colon S(\mathbb C)& \to& \mathcal A(\mathbb C)\\
s &\mapsto&(\mathfrak q^{-1}_s\circ\sigma'\circ q)(s)
\end{eqnarray*}
and $\mathfrak q_s:\mathcal A_s\xrightarrow{\cong} \mathcal A_{q(s)}$  is the isomorphism induced by the fiber product, for any $s\in S(\mathbb C)$. 
An algebraic section $\sigma\colon S\to \cA$ is said to be \emph{ramified} (over $S$) if it is not unramified. We say that $\sigma$ is unramified [resp. ramified] over a Zariski open subset $U\subseteq S$ if $\sigma_{|U}$ is an unramified [resp. ramified] section  of the restricted abelian scheme $\mathcal A_{|U}\to U$.
%If $S$ is not maximal we say that $\sigma$ is (un)ramified if an extension of $\sigma$ to a maximal base is  (un)ramified.
\end{defn}

In  \Cref{{cor:divisible_ram}} we show that every torsion section is unramified. This fact is noted in \cite{CZ} for elliptic schemes over a curve, though we were unable to find a proof or reference for higher-dimensional cases. Of particular interest are settings where unramified sections are necessarily torsion, as this condition is equivalent to say that every abelian scheme obtained via an unramified base change of $\fA_{g,T} \to T$ has zero Mordell-Weil rank. For example, Shioda \cite{Shio} proves that unramified sections of elliptic schemes over a curve are always torsion, while Mok and To \cite{MokTo} establish the vanishing of the Mordell-Weil rank for unramified finite extension Kuga families (without locally constant part). However, in general torsion sections form a proper subset of unramified sections, as in \Cref{expl:tor_ram} we construct a non-torsion unramified section.

In the following, we consider abelian schemes without fixed part. Additionally, we assume that the modular map $p:S\to T\subseteq\mathbb A_g$ is generically finite\footnote{For us a morphism of schemes $f\colon X\to Y$ is \emph{generically finite} if there exists a dense open $V\subseteq Y$ such that the restriction $f\colon f^{-1}(V)\to V$ is a finite morphism.} (onto its image), this assumption is often referred as \emph{maximal variation in moduli}. When $\mathcal A \to S$ is simple and $S$ is a curve, maximal variation in moduli is equivalent to being without fixed part. We prove the result stated below.

%Notice that for a generically finite morphism, any section becomes unramified by restriction to some dense open subset of S. Thus, in order to make the following results meaningful, we also need to assume that the base $S$ is \emph{maximal} in the sense that on any compactification $\overline{\cA} \to \overline{S}$ the fibers over $\overline{S}\setminus S$ are all singular.

%Under the extra hypotheses that the abelian scheme is without fixed part (i.e. it has trivial Chow trace) and that the modular map $p:S\to p(S)\subseteq\mathbb A_g$ is generically finite\footnote{In the literature the is no uniform consensus on the precise notion of generically finite morphism. For us a morphism of schemes $f\colon X\to Y$ is \emph{generically finite} if there exists a dense open $V\subseteq Y$ such that the restriction $f\colon f^{-1}(V)\to V$ is a finite morphism.} (onto its image) we prove the result stated below. Notice that such condition on the modular map is often referred as \emph{maximal variation in moduli}.

\begin{thm}\label{mainThm1}
    Let $\phi:\mathcal A \to S$ be an abelian scheme without fixed part such that the modular map $p:S \to T \subseteq \mathbb A_g$ is generically  finite. Let $U_p\subseteq S$ be the Zariski open subset on which $p$ is finite and flat. Then for any section $\sigma\colon S\to \mathcal A$  ramified over $U_p$, the relative monodromy group $M_\sigma^{\textrm{rel}}$ is non-trivial.
\end{thm}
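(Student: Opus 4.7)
My plan is to argue by contradiction: assume $M_\sigma^{\textrm{rel}}=\{0\}$ and produce an algebraic section $\sigma'$ of $\mathcal A' \to S^{\textrm{ur}}$ with $q^*\sigma'=\sigma$, contradicting the ramification hypothesis via \Cref{def:ram_sect}. After restricting to $U_p$ I may assume that $q\colon S\to S^{\textrm{ur}}$ is finite flat.

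On the universal cover $\widetilde{S(\mathbb C)}$ of $S(\mathbb C)$ I simultaneously globalize the period map $\widetilde{\mathfrak P}$ and the abelian logarithm $\widetilde{\log}_\sigma$. The action of $\pi_1(S(\mathbb C))$ on the pair fits into the short exact sequence
\[
1 \longrightarrow M_\sigma^{\textrm{rel}} \longrightarrow M_\sigma \longrightarrow \textrm{Mon}(\mathcal A) \longrightarrow 1,
\]
so the vanishing of $M_\sigma^{\textrm{rel}}$ forces $\widetilde{\log}_\sigma$ to be invariant under every loop in the kernel of the period representation. Both $\widetilde{\mathfrak P}$ and $\widetilde{\log}_\sigma$ then descend to the analytic cover $\widehat S\to S(\mathbb C)$ classified by $\ker(\pi_1(S(\mathbb C))\to \textrm{Mon}(\mathcal A))$.

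Next I compare $\widehat S$ with the analogous period cover $\widehat{S^{\textrm{ur}}}\to S^{\textrm{ur}}(\mathbb C)$ attached to $\mathcal A'$. Since $\mathcal A = q^*\mathcal A'$ by \eqref{eq:relNormalization}, the period monodromy of $\mathcal A$ factors through $q_*\colon \pi_1(S(\mathbb C))\to \pi_1(S^{\textrm{ur}}(\mathbb C))$, so $\widehat S$ sits inside $S(\mathbb C)\times_{S^{\textrm{ur}}(\mathbb C)}\widehat{S^{\textrm{ur}}}$ with finite-fibered projection to $\widehat{S^{\textrm{ur}}}$ by finite flatness of $q$. The central step is to show that $\widetilde{\log}_\sigma$ is \emph{constant along the fibers of this projection, modulo the globally trivialized period lattice}. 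Once this is established, $\sigma$ descends analytically to a section $\sigma'\colon S^{\textrm{ur}}(\mathbb C)\to \mathcal A'(\mathbb C)$ with $q^*\sigma'=\sigma$; properness of $\mathcal A' \to S^{\textrm{ur}}$, normality of $S^{\textrm{ur}}$, and GAGA then upgrade $\sigma'$ to an algebraic section, extending across the codimension-one locus where $q$ fails to be finite flat by a Hartogs-type argument, and yielding the required contradiction.

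To establish the fiberwise constancy I would work with a Galois closure $\widetilde q\colon \widetilde S \to S^{\textrm{ur}}$ of $q$, with groups $G$ and $H=\textrm{Gal}(\widetilde S/S)$. If $\sigma$ fails to descend then some $g\in G\setminus H$ satisfies $g\cdot \widetilde\sigma \neq \widetilde\sigma$. Using the richness of $\textrm{Mon}(\mathcal A)$ guaranteed by the absence of fixed part together with maximal variation in moduli, I would build a loop of the form $\gamma_1\gamma_2$ in $S^{\textrm{ur}}(\mathbb C)$ whose $\widetilde q$-monodromy lies in $H$ — so that it lifts to an honest loop in $S(\mathbb C)$ — and whose total period monodromy on $\mathcal A'$ is trivial, while the analytic continuation of $\widetilde{\log}_\sigma$ around this lift picks up a nonzero lattice element detecting $g\cdot\widetilde\sigma\neq \widetilde\sigma$ in the fiber $\mathcal A'_{\widetilde q(\widetilde s)}$; this would contradict $M_\sigma^{\textrm{rel}}=\{0\}$. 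The most delicate point is precisely the construction of this compensating loop and the verification that the resulting log monodromy is nonzero: one must simultaneously control the deck-group coordinate in $G$ and the period coordinate in $\textrm{Mon}(\mathcal A')$ inside $\pi_1(S^{\textrm{ur}}(\mathbb C))$, and then translate the Galois-theoretic obstruction to descent into an actual lattice element along the lifted loop. This is where the hypotheses on $\mathcal A$ and on the generic finiteness of $p$ enter in an essential way, by ensuring enough independence between the deck-group action and the period representation to make the construction possible.
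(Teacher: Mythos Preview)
Your overall strategy --- assume $M_\sigma^{\textrm{rel}}=\{0\}$ and deduce that $\sigma$ is unramified --- matches the paper's. The proposal breaks down, however, at the step you yourself flag as most delicate: the ``fiberwise constancy modulo periods'' of $\widetilde{\log}_\sigma$ along $\widehat S\to\widehat{S^{\textrm{ur}}}$, to be established via a compensating loop. Under the standing hypothesis $M_\sigma^{\textrm{rel}}=\{0\}$, \emph{every} loop $\delta\in\ker\rho\subset\pi_1(S(\mathbb C))$ satisfies $c(\delta)=0$ by definition; one therefore cannot produce a loop in $\ker\rho$ whose log monodromy ``picks up a nonzero lattice element''. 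The difficulty is structural: the condition $M_\sigma^{\textrm{rel}}=\{0\}$ constrains only \emph{loops} in $S$, whereas comparing $\sigma(s_1)$ with $\sigma(s_2)$ for distinct points of a fibre $q^{-1}(s')$ involves \emph{paths} in $S$ that are not loops (since $q$ is genuinely ramified, a path from $s_1$ to $s_2$ projects to a loop in $S^{\textrm{ur}}$ that need not lift back to a loop in $S$). Your sketch gives no mechanism that converts the Galois obstruction $g\cdot\widetilde\sigma\neq\widetilde\sigma$ into a nonzero value of the cocycle $c$ on $\ker\rho$, and it is not clear that one exists. A secondary issue: the appeal to GAGA to algebraize $\sigma'$ is unjustified, since $S^{\textrm{ur}}$ is only quasi-projective.

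The paper avoids this obstacle by \emph{not} trying to descend $\sigma$ directly. After noting that $\ker q_*\subseteq\ker\rho\subseteq\ker\theta_\sigma$ (so $\theta_\sigma$ factors through the surjection $q_*\colon\pi_1(S)\to\pi_1(S^{\textrm{ur}})$), it uses the Lefschetz hyperplane theorem to pass to a curve $C\subset S$ with image $C'\subset S^{\textrm{ur}}$; the Stein property of $C'$ then gives a \emph{holomorphic} section $\sigma'$ on $C'$ whose cohomology class in $H^1(\pi_1(C'),\mathbb Z^{2g})$ matches that induced by $\sigma_C$. Writing $\sigma_C-q_C^*\sigma'=\exp(f)$ and applying trace and pullback (using $\Tr\circ q_C^*=\deg q\cdot\mathrm{id}$) eliminates the transcendental $\sigma'$ and shows that the \emph{algebraic} section $q^*\Tr(\sigma)-\deg q\cdot\sigma$ admits a global logarithm on $S$; by the absence of fixed part (Lang--N\'eron) it must vanish. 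Hence $\deg q\cdot\sigma=q^*\Tr(\sigma)$, and \Cref{cor:divisible_ram} (via \'etaleness of multiplication by $\deg q$ on $\mathcal A'$) upgrades this to $\sigma$ itself being unramified. The trace trick is the key idea you are missing: it replaces the unavailable direct descent of $\sigma$ by descent of a multiple, which suffices.
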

When $\phi\colon\mathcal{A} \to S$ is an elliptic scheme over a curve, we may always assume that the modular map $p$ is finite and flat after performing a base change. In this case we have $U_p = S$, and the results of \cite{CZ} are recovered. An interesting case where our result holds, is given by the family of Jacobians of hyperelliptic curves described in \cite[Section 8.1]{ACZ}. Our techniques seem unsuitable to prove the same statement for all non-torsion sections or for a more general modular map $p$. However, in the particular case  when $p$ is finite and flat and all unramified sections of $\mathcal A\to S$ are torsion, then the claim \Cref{mainThm1} clearly holds for all non-torsion sections. For instance, by \cite[Main Theorem]{MokTo} such special situation occurs  when $p$ is finite and $\mathfrak A_{g,T}\to T$ is a Kuga family without locally constant  part. In particular, $T$ may be chosen to be a totally geodesic (i.e. weakly-special) subvariety of $\bA_g$. 

Regarding the relation between the rank of $M^{\textrm{rel}}_\sigma$ and the geometric properties of $\sigma$ we prove the following:
\begin{cor}\label{mainThm2}
    Under the same hypotheses as in \Cref{mainThm1}, if the (linear) action of the monodromy group $\textrm{Mon}(\mathcal A)$ is irreducible,  then  for any section $\sigma\colon S\to \mathcal A$  ramified over $U_p$, the relative monodromy group $M_\sigma^{\textrm{rel}}$ is isomorphic to $\mathbb Z^{2g}$.
\end{cor}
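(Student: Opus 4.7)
The plan is to deduce the corollary from \Cref{mainThm1} by exploiting the natural action of $\textrm{Mon}(\mathcal A)$ on $M^{\textrm{rel}}_\sigma$. First I would recall the ambient structure: an element of $M_\sigma\subseteq\textrm{SL}_{2g+1}(\mathbb Z)$ may be written as a block matrix
\[
\begin{pmatrix} A & b \\ 0 & 1 \end{pmatrix},\qquad A\in\textrm{Mon}(\mathcal A)\subseteq\textrm{Sp}_{2g}(\mathbb Z),\ b\in\mathbb Z^{2g},
\]
where $A$ records the period monodromy and $b$ the Betti-coordinate shift of $\log_\sigma$. By definition $M^{\textrm{rel}}_\sigma$ is the kernel of the projection $M_\sigma\to\textrm{Mon}(\mathcal A)$, i.e.\ the subgroup of matrices with $A=I$. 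Hence $M^{\textrm{rel}}_\sigma$ is a normal subgroup of $M_\sigma$, and it embeds naturally in $\mathbb Z^{2g}$ via $b$.

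The key algebraic observation is that the conjugation action of $M_\sigma$ on $M^{\textrm{rel}}_\sigma$ descends to an action of $\textrm{Mon}(\mathcal A)$ on $\mathbb Z^{2g}$ which is precisely its canonical linear action. Indeed, a direct block computation gives
\[
\begin{pmatrix} A & c \\ 0 & 1 \end{pmatrix}\begin{pmatrix} I & b \\ 0 & 1 \end{pmatrix}\begin{pmatrix} A & c \\ 0 & 1 \end{pmatrix}^{-1} = \begin{pmatrix} I & Ab \\ 0 & 1 \end{pmatrix}.
\]
Therefore $M^{\textrm{rel}}_\sigma\subseteq\mathbb Z^{2g}$ is a $\textrm{Mon}(\mathcal A)$-stable subgroup, and consequently $M^{\textrm{rel}}_\sigma\otimes\mathbb Q$ is a $\textrm{Mon}(\mathcal A)$-invariant $\mathbb Q$-subspace of $\mathbb Q^{2g}$.

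Now the hypotheses of \Cref{mainThm1} hold, so $\sigma$ being ramified over $U_p$ ensures $M^{\textrm{rel}}_\sigma\neq\{0\}$. Combined with the standing assumption that $\textrm{Mon}(\mathcal A)$ acts irreducibly on $\mathbb Q^{2g}$, this forces $M^{\textrm{rel}}_\sigma\otimes\mathbb Q=\mathbb Q^{2g}$, i.e.\ $M^{\textrm{rel}}_\sigma$ is a sublattice of $\mathbb Z^{2g}$ of full rank $2g$. Any such sublattice is abstractly isomorphic to $\mathbb Z^{2g}$, which is the desired conclusion.

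The only non-formal point is the invariance argument, which hinges on the precise embedding $M_\sigma\hookrightarrow\textrm{SL}_{2g+1}(\mathbb Z)$ recalled in the introduction; once that is unwound the corollary is immediate from \Cref{mainThm1}. I would therefore expect the proof to be essentially a three-line consequence of the theorem together with the elementary fact that nonzero $\textrm{Mon}(\mathcal A)$-invariant subgroups of $\mathbb Z^{2g}$ have full rank under the irreducibility assumption. No further geometric input is needed.
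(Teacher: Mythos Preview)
Your proof is correct and follows essentially the same approach as the paper's: both establish that $M_\sigma^{\textrm{rel}}$ is $\textrm{Mon}(\mathcal A)$-stable (you via block-matrix conjugation inside $M_\sigma$, the paper via tracking the action of $k^{-1}hk$ on $\log_\sigma$ for $h\in\ker\rho$ and arbitrary $k$), and then invoke irreducibility together with \Cref{mainThm1}. Your presentation is slightly more streamlined—arguing directly rather than by contradiction—but the underlying idea is identical.
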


Also \Cref{mainThm2} can be extended for all non-torsion sections under some additional hypothesis. For instance the action of $\textrm{Mon}(\cA)$ is irreducible when $T$ is not contained in any proper special subvariety of $\mathbb A_g$ (see \cite[10.2.6 Lemma]{ACZ}) or when the abelian scheme is simple. Hence our \Cref{mainThm2} holds for all sections, which are ramified outside a Zariski closed subset, of a simple abelian scheme without fixed part over a curve $S$. Our results suggest the following conjecture:
\begin{conj}\label{conj:main_conj}
Let  $\phi\colon \cA\to S$ be an abelian scheme of relative dimension $g$  without fixed part. The rank of $M^{\textrm{rel}}_\sigma$ is $2g'$ where $g'\le g$ is the relative dimension of the minimal group subscheme of $\cA\to S$ containing the image of the section $\sigma$.
\end{conj}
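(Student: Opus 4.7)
\textbf{Proof proposal for \Cref{conj:main_conj}.} Let $\mathcal B\to S$ be the minimal group subscheme of $\mathcal A\to S$ whose image contains $\sigma(S)$. Since $\mathcal A$ is commutative and $\mathcal B$ is necessarily connected and closed in $\mathcal A$, it is an abelian subscheme of relative dimension $g'$, and $\sigma$ factors through $\mathcal B$. Moreover $\mathcal B$ inherits the ``no fixed part'' property: any fixed part of $\mathcal B$ would be an abelian subvariety of the generic fiber $B$ defined over $\mathbb C$, hence contained in the fixed part of $A$, which is trivial by hypothesis.

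The upper bound $\rk(M^{\mathrm{rel}}_\sigma)\le 2g'$ is the easy direction. Since $\sigma(S)\subseteq\mathcal B$, every local abelian logarithm $\log_\sigma$ takes values in $\Lie(\mathcal B)\subseteq \Lie(\mathcal A)$. Therefore the monodromy of $\log_\sigma$ along any loop in $\ker(\pi_1(S(\mathbb C))\to \mathrm{Mon}(\mathcal A))$ lies in the period lattice $\Lambda_{\mathcal B}$, which has rank $2g'$, yielding an embedding $M^{\mathrm{rel}}_\sigma\hookrightarrow \mathbb Z^{2g'}$.

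For the lower bound, after replacing $\mathcal A$ by $\mathcal B$ (and base changing $S$ along a suitable finite étale cover if needed), one reduces to the case $g'=g$: the image of $\sigma$ is not contained in any proper abelian subscheme of $\mathcal A$. The plan is then to decompose $\mathcal A$ up to isogeny as $\prod_i \mathcal A_i^{n_i}$ with $\mathcal A_i$ simple, project $\sigma$ onto each factor, and apply a variant of \Cref{mainThm1} and \Cref{mainThm2} to each piece. Simplicity of $\mathcal A_i$ forces the action of $\mathrm{Mon}(\mathcal A_i)$ on $\Lambda_{\mathcal A_i}$ to be irreducible, and combined with the absence of a fixed part it should also guarantee maximal variation in moduli (after further base change). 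The minimality of $\mathcal A$ as a group subscheme containing $\sigma(S)$ keeps each projection ``large'' in its factor, which ought to preclude triviality and match the ramification hypothesis needed.

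The principal obstacles I expect are twofold. First, preservation of ramification in the sense of \Cref{def:ram_sect} under isogeny projection is not automatic: one must verify that each projected section still meets the hypothesis of \Cref{mainThm1} with respect to the modular map of the corresponding simple factor. Second, and more serious, the assembly step is delicate, since $\ker(\pi_1(S(\mathbb C))\to\mathrm{Mon}(\mathcal A))=\bigcap_i \ker(\pi_1(S(\mathbb C))\to\mathrm{Mon}(\mathcal A_i^{n_i}))$ is strictly smaller than each factor's kernel; the factor-by-factor relative monodromies are thus computed from a larger supply of loops than is available globally, and one must exhibit enough of them in the intersection to span $\Lambda_{\mathcal A}\otimes\mathbb Q$. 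An alternative route, which may bypass both issues, is to identify $\rk(M^{\mathrm{rel}}_\sigma)$ with the generic rank of the Betti map $\beta_\sigma$ and then invoke a formula of André--Corvaja--Zannier type expressing $\rk(\mathrm d\beta_\sigma)$ precisely in terms of the minimal group subscheme containing $\sigma(S)$.
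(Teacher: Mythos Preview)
The statement you are attempting to prove is \emph{not} a theorem in the paper: it is explicitly labeled a conjecture (\Cref{conj:main_conj}), and the paper offers no proof of it in full generality. The authors establish only special cases: \Cref{mainThm2} settles the conjecture when the modular map is finite and surjective onto $\bA_g$ (where no proper group subschemes exist, so $g'=g$ automatically), and they cite \cite{T1} for products of two elliptic schemes over a curve. There is therefore no ``paper's proof'' against which to compare your proposal; you are attacking an open problem.

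Your upper bound $\rk(M^{\mathrm{rel}}_\sigma)\le 2g'$ is correct and the argument is sound: choosing the logarithm to lie in $\Lie(\mathcal B)$, analytic continuation along loops in $\ker\rho$ keeps it in $\Lie(\mathcal B)$, so the difference is a period of $\mathcal B$ and hence lies in $\Lambda_{\mathcal B,s}\cong\bZ^{2g'}$.

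For the lower bound you have honestly flagged the genuine obstacles rather than papering over them, and those obstacles are exactly why the statement remains conjectural. The assembly problem is the crux: even after reducing to $g'=g$ and decomposing up to isogeny into simple factors, $M^{\mathrm{rel}}_\sigma$ with respect to $\mathcal A$ is computed on $\ker\rho_{\mathcal A}=\bigcap_i\ker\rho_{\mathcal A_i^{n_i}}$, which may be strictly smaller than each $\ker\rho_{\mathcal A_i^{n_i}}$, and nothing in the paper's toolkit produces enough loops in that intersection. The ramification issue is also real: \Cref{mainThm1} is phrased via the modular map of the whole scheme and its behavior under isogeny projection is not addressed. Moreover, the conjecture does \emph{not} assume maximal variation in moduli, whereas \Cref{mainThm1} does; the equivalence ``simple $+$ no fixed part $\Rightarrow$ maximal variation'' that you invoke is asserted in the paper only when $S$ is a curve, so your reduction to simple factors does not automatically land you inside the hypotheses of \Cref{mainThm1}. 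Your alternative route via the generic rank of the Betti map is not developed in the paper either, and the identification of $\rk(M^{\mathrm{rel}}_\sigma)$ with $\rk(\mathrm d\beta_\sigma)$ is not established there. In short, your proposal is a reasonable strategic outline, but it is not a proof, and the gaps you identify are substantive.
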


\Cref{mainThm2} proves \Cref{conj:main_conj} when the modular map is finite and surjective onto $\bA_g$. Notice that such a hypothesis prevents our abelian scheme from having any non-trivial proper group-subscheme. For the same reason, in this case the scheme automatically is without fixed part. Moreover, the second author of the present paper in \cite{T1} proved \Cref{conj:main_conj} in the case of products of two elliptic schemes over a curve.

After the initial version of this paper (containing somewhat weaker results) was circulated, Y. André kindly sent us a preprint~\cite{And_mon}, which addresses certain special cases of~\Cref{conj:main_conj} when \( g' = g \), overlapping with parts of our work. 
His approach is fundamentally Hodge-theoretic, and his main hypothesis on the abelian scheme is the non-thinness of \(\mathrm{Mon}(\mathcal{A})\), in contrast to our condition on the modular map \( p \). A more detailed comparison is provided in~\Cref{sec:3.5}.

\paragraph{Application I: properties of the Betti map.}
Our theorem has immediate implications on the arithmetic properties of the sections of abelian schemes, in particular on some properties of the Betti map. On one hand periods and abelian logarithms in general cannot be globally defined, but on the other hand it is also true that they can be always be globally defined after going to the universal cover $\widetilde{S}$ of the base $S(\bC)$, so the Betti map is also globally defined on $  \widetilde{S}$. Let us denote by $S_* \to S(\bC)$ the minimal topological cover where periods are globally defined and by $S_\sigma \to S_*$ the minimal topological cover where abelian logarithm of $\sigma$ is globally defined: then, $S_\sigma \to S_*$ is also the minimal topological cover of $S_*$ where the Betti map is globally defined and we have the following situation
\[
\widetilde{S} \to S_\sigma \to S_* \to S(\bC)\,.
\]
The map $S_* \to S(\bC)$ is a Galois cover whose Galois group is $\textrm{Mon}(\cA,s)$, while the map $S_\sigma \to S(\bC)$ is a Galois cover with Galois group $M_\sigma$. Our main theorem determines properties of the Galois group $M_\sigma^\textrm{rel}$ of the Galois cover $S_\sigma \to S_*$, giving precise information about the global definition of the Betti map which is useful to know for several applications.

For instance, it is straightforward to see that if $\sigma$ is a torsion section then the Betti map of $\sigma$ is constant and thus globally defined on the base. The converse statement is also true but more difficult to prove, it is widely known as Manin's kernel theorem. As a consequence of \Cref{mainThm1}, we obtain a new proof of a stronger version of Manin's kernel theorem. We call it ``strong version'' because in its statement we just assume the Betti coordinates to be globally defined instead of rational constant a priori.

\paragraph{Application II: the functional transcendence step in the Pila-Zannier method.} The Betti map is used in the work \cite{PZ} of  Pila and Zannier together with a result of Pila and Wilkie in transcendental Diophantine geometry (see \cite{PiWi}) to give a new proof of the Manin-Mumford conjecture. Such a strategy is referred as ``the Pila-Zannier method''; in the case of abelian schemes over $\overline{\mathbb Q}$ it works in the following way: the goal is to have a control on the distribution of torsion values of $\sigma$ (i.e. elements of $\sigma^{-1}(\cA_{\tor})$) with bounded heights. First one gives  lower bounds on the number of such torsion values using Galois conjugates and the height inequality of Dimitrov-Gao-Habegger. On the other hand the Betti map $\beta_\sigma$ transforms the torsion values in rational points of a definable set in the $o$-minimal structure $\mathbb R_{\an,\exp}$. At this point a ``Pila-Wilkie type'' result  gives a  control on the rational points on the transcendental part of such definable set. Still we have no information on the rational points in the algebraic part of the definable set, but here the crucial point is to use some functional transcendence theorem of ``Ax-Schanuel type" to control the algebraic part of the definable set. Let us take a closer look at the last step involving the functional transcendence arguments: a classical way to tackle this step is by using a result of Andr\'e (see \cite[Theorem 3]{André}). It says that the coordinates of the abelian logarithm $\log_\sigma$ are pairwise algebraically independent over the extension of $\mathbb C(S)$ generated by the coordinates of the period map $\mathfrak P$. Such result has often very strong implications, for instance in \cite{DT}, thanks to such result, it is  shown that the algebraic part of the definable set constructed with the Pila-Zannier method is empty.

Such type of functional transcendence results are related to the study of the relative monodromy. In fact Andr\'e's result can be interpreted in terms of differential Galois theory. More precisely, considering directional derivatives along a tangent vector field $\partial$ with respect to the Gauss-Manin connection, the transcendence degree of the extension
\[
\bC(S)(\fP,\partial\fP)(\log_\sigma, \partial\log_\sigma)/\bC(S)(\fP,\partial\fP)
\]
is equal to the dimension of the kernel of the homomorphism from the differential Galois group of $\bC(S)(\fP,\partial\fP)(\log_\sigma, \partial\log_\sigma)/\bC(S)$ to the differential Galois group of $\bC(S)(\fP,\partial\fP)/\bC(S)$. Recall that by general theory, the differential Galois group of a Picard-Vessiot extension is exactly the Zariski closure of the corresponding monodromy group. In \cite{CZ} it is pointed out that André's theorem gives no information about the relative monodromy group $M_\sigma^\textrm{rel}$. Thus, \Cref{mainThm2} can be interpreted as a strengthening of André's theorem (under our hypothesis). We shall in fact prove how André's result is a consequence of \Cref{mainThm2}, and  this in turns leads to a new proof of the algebraic independence of the coordinates of $\log_\sigma$ with respect to periods.

\paragraph{Further developments.} After the works of Andr\'e, Corvaja, Masser and Zannier (see for instance \cite{ACZ}, \cite{CMZ}, \cite{CZ}, \cite{Zbook}) the Betti map became a standard tool in Diophantine geometry. Remarkably,  Dimitrov, Gao and Habbegger in \cite {DGH} used the Betti map and a novel height inequality (then generalized by Yuan and Zhang in \cite{YZ}) to prove a uniform version of the Mordell-Lang conjecture. The Pila-Zannier method was in turns employed by Gao and Habbegger in combination with  new ideas involving some ``degeneracy loci'', to  solve the  relative Manin-Mumford conjecture (see \cite{GH}). Other very recent applications of the Betti map can be found in the works of Xie and Yuan in \cite{XY} towards the geometric Bombieri-Lang conjecture. 

Our hope  for future developments is twofold. First of all, the main results of this paper could be applied for the solution of some ``special points problems'' when in the functional transcendence step of the Pila-Zannier method  André's theorem fails to give a control on rational points of the algebraic part of the constructed definable set. It is not clear yet if such cases exist and if they are interesting. Secondly, it would be nice to remove the hypothesis of maximal variation in moduli and also to obtain the result for all non-torsion sections in \Cref{mainThm1}.

\paragraph{Strategy of the proofs.}

Our proofs develop the ideas introduced in \cite{CZ}, interpreting the obstruction to defining a global logarithm as a Galois cohomology class relative to the monodromy action of $\pi_1(S(\mathbb{C}))$ on $\mathbb{Z}^{2g}$. The key innovation involves applying the Lefschetz hyperplane theorem for quasi-projective varieties (see \cite[Part II, 1.2, Theorem]{GorMac}) to reduce the problem to generic curves in the base variety. We then utilize properties of the trace operator with respect to the pullback of sections (in the case of finite flat morphisms). Unlike the approach in \cite{CZ}, our method does not require the construction of abelian schemes with finite Mordell-Weil groups.

For the proof of \Cref{mainThm1} we proceed in the following way:  we construct an intermediate abelian scheme $\cA' \to S'$ which factorizes the universal diagram induced by the modular map $p:S \to T$. This construction is carried out by means of topological techniques in such a way that $\cA' \to S'$ is a finite unramified base change (after normalization) of the universal family. Then we find a suitable curve inside $S(\bC)$ which preserves the complete monodromy action of the fundamental group of $S(\bC)$ on periods and logarithms. The curve is obtained after iterating the aforementioned Lefschetz theorem and turns out to be a Stein space. This last property allows us to construct holomorphic sections of the unramified family which come from a Galois cohomology class describing the obstruction of the global logarithms. The proof then follows: we assume the triviality of $M_\sigma^\textrm{rel}$, then by using the previous apparatus and the properties of trace and pull-back operators, we deduce that $\sigma$ should be unramified. This proves the non-triviality of $M_\sigma^{\textrm{rel}}$ for ramified sections (over $U_p$).

Once that we have addressed the non-triviality of $M^{\textrm{rel}}_\sigma$, the corollaries follow immediately from the extra assumptions.

\begin{comment}
\paragraph{Overview of the paper.}\todo{da rivedere. Magari togliamo del tutto. Non serva a nenta}

In \Cref{section2}, we introduce the principal objects of study and summarize relevant results from the general theory, with particular emphasis on the different monodromy groups.  The proof of \Cref{mainThm1} is presented in \Cref{section:3.2}, building upon technical results concerning the invariance properties established in \Cref{section3.1}. Subsequently, \Cref{section:3.3} contains the proof of the corollary.  In \Cref{sec:3.4} and \Cref{sec:3.5}, we examine concrete examples of abelian schemes equipped with sections for which we can estimate the rank of their relative monodromy groups.  Finally, \Cref{section4} presents applications of our results.
\end{comment}

\paragraph{Acknowledgmentss.} 
The authors are deeply grateful to \emph{P. Corvaja} and \emph{U. Zannier} for their support and numerous insights. Moreover, they thank \emph{Y. Andr\'e} for the interesting discussions on the topic.
The second author is supported by PRIN 2022, CUP F53D23002740006. Moreover, he would like to thank the Institute for Theoretical Sciences of Westlake University for their hospitality while he was completing the paper.

\section{Generalities on abelian schemes}\label{section2}

\subsection{Ramification and torsion}\label{sec:tor_ram}

We discuss here the notion of unramified section formally given in \Cref{{def:ram_sect}}, providing examples and properties. We prove a characterization useful for constructing unramified sections, and we show that torsion sections are always unramified. We refer to the notation introduced at the beginning and to the diagram 
\begin{equation}\label{eq:relNormalization}
	\begin{tikzcd}
		\cA \arrow{d}{\phi} \arrow{r} & \cA' \arrow{d}{\phi'}\arrow{r} & \fA_{g,T^\nu} \arrow{d}\arrow{r} & \fA_{g,T}\arrow{d}\\
		S \arrow{r}{q} \arrow[bend right, swap]{rr}{p^\nu}& S^{\ur} \arrow{r}{q'} & T^\nu \arrow{r}{\nu} & T,
	\end{tikzcd}
\end{equation}
where $q'$ is unramified, $q$ is dominant, $S^{\ur}$ is normal and $\nu$ is the normalization map.

In general, the normalization map $\nu \colon T^\nu \to T$ is not unramified, and understanding its ramification is an interesting problem. For instance, the normalization of a nodal curve is unramified but not étale, while the normalization of a cuspidal curve is ramified.  On the other hand, if we start with a normal irreducible variety, the relative normalizations within finite field extensions of the function field yield natural examples of unramified morphisms. In fact, there is a strong classification result: étale covers of a normal integral variety, such as $T^\nu$, correspond exactly to the relative normalizations of $T^\nu$. A more general version of this statement, which includes non-integral schemes, can be found in \cite[Exposé I, Corollaire~10.2 and Corollaire~10.3]{SGA1}.

We aim to establish criteria for identifying unramified sections $\sigma \colon S \to \mathcal{A}$. At first glance, one might expect that unramified sections should be defined as those induced by unramified covers of $T$, rather than $T^\nu$. However, since $T$ may fail to be normal, this definition proves to be ill-behaved, as previously discussed.   Nevertheless, we observe that due to the ``cancellation property'' of unramified morphisms (see \cite[TAG 02GG]{stacks-project}), sections arising from unramified covers of $T$ remain unramified under our current definition. The converse, however, does not hold in general, as the normalization map $\nu \colon T^\nu \to T$ may itself be ramified.

We remark that the normality condition is essential in order to have the correspondence between the notion of topological covers and finite étale covers. In fact while finite étale morphisms induce topological covers of the corresponding complex spaces, we need the normality condition to conclude that an unramified (finite) topological cover induces a finite étale map. This implies the first simple criterion to construct unramified sections:
\begin{prop}\label{prop:unramifiedMiddle}
    Assume the modular map $p:S \to T$ to be finite. Let $\sigma:S \to \cA$ be a section. Let $\cA''/S''$ an abelian scheme with a cartesian diagram
    \[
    \begin{tikzcd}
        \cA \arrow{r} \arrow{d}{\phi} & \cA'' \arrow{r} \arrow{d} & \fA_{g,T^\nu} \arrow{d}\\
        S \arrow[bend left]{u}{\sigma} \arrow{r}{g} \arrow[bend right, swap]{rr}{p^\nu} & S'' \arrow{r}{f} & T^\nu,
    \end{tikzcd}
    \]
    where $f$ is an unramified morphism. If $\sigma$ is pullback of a section of the abelian scheme $\cA''/S''$ by $g$, then $\sigma$ is an unramified section.
\end{prop}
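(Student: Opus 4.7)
\medskip
\noindent\textbf{Proof proposal.} The strategy is to construct a morphism $\alpha\colon S^{\ur}\to S''$ over $T^\nu$ through which the section $\tau\colon S''\to\mathcal{A}''$ (whose pullback by $g$ is $\sigma$) descends to a section $\sigma'\colon S^{\ur}\to\mathcal{A}'$, and then to check that $\sigma=q^*\sigma'$.

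First I would reduce to the case in which $S''$ is irreducible and reduced: since $S$ is irreducible and $g\colon S\to S''$ is dominant onto its image, the image lies in a single irreducible component of $S''_{\mathrm{red}}$, and restricting the cartesian diagram to that component preserves all the hypotheses (in particular, unramifiedness of $f$ and the equality $\sigma=g^{*}\tau$). Next I would show that the function-field extension $\mathbb{C}(S'')/\mathbb{C}(T^{\nu})$ is unramified in the sense of the footnote, i.e.\ that the normalization $\tilde{S}''\to T^{\nu}$ is an unramified (hence, being finite, finite étale) morphism. Because $p=f\circ g\circ(\text{something})$ is finite and $g$ is dominant, $f$ is quasi-finite; combined with the fact that $f$ is unramified and $T^{\nu}$ is normal, at the generic point the fiber of $f$ is a disjoint union of finite separable extensions of $\mathbb{C}(T^{\nu})$. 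The integral closure of $\mathcal{O}_{T^{\nu}}$ in $\mathbb{C}(S'')$ therefore defines a finite étale cover of $T^{\nu}$ which coincides with $\tilde{S}''$ (by \cite[Exp.~I, Cor.~10.3]{SGA1} and the fact that since $f$ is finite, integral closure of $\mathcal{O}_{S''}$ in $\mathbb{C}(S'')$ equals integral closure of $\mathcal{O}_{T^{\nu}}$). Hence $\mathbb{C}(S'')\subseteq L$ by maximality of $L$.

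With that in hand, the universal property of normalization in field extensions yields a morphism $S^{\ur}\to \tilde{S}''$ over $T^{\nu}$, which composed with the normalization map $\tilde{S}''\to S''$ produces $\alpha\colon S^{\ur}\to S''$ satisfying $f\circ\alpha=q'$. Because both $\mathcal{A}'$ and $\mathcal{A}''$ are base changes of $\mathfrak{A}_{g,T^{\nu}}$ along the maps $q'$ and $f$ respectively, and $f\circ\alpha=q'$, there is a canonical identification $\mathcal{A}'\cong S^{\ur}\times_{S''}\mathcal{A}''$. Pulling $\tau$ back along $\alpha$ therefore gives a section $\sigma':=\alpha^{*}\tau\colon S^{\ur}\to\mathcal{A}'$.

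It remains to verify that $\sigma=q^{*}\sigma'$, which reduces to showing $\alpha\circ q=g$ as morphisms $S\to S''$. Both morphisms satisfy $f\circ(\alpha\circ q)=q'\circ q=p^{\nu}=f\circ g$, so they become equal after postcomposition with the unramified morphism $f$. The rigidity of unramified morphisms (the diagonal $\Delta_{f}$ is an open immersion) implies that the equalizer of these two morphisms is open in $S$; since it is also non-empty (a direct comparison at the generic point shows that both morphisms induce, via the chain $\mathbb{C}(T^{\nu})\hookrightarrow\mathbb{C}(S'')\hookrightarrow \mathbb{C}(S)$, the same field embedding coming from the original factorization $p^{\nu}=f\circ g$) and $S$ is irreducible, the two morphisms agree everywhere. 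This yields $\sigma=q^{*}\sigma'$, concluding the proof.

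\medskip
\noindent\textbf{Main obstacle.} The subtle point is Step~2: showing that an unramified morphism $f$ with possibly non-normal source induces an \emph{unramified field extension} in the precise sense of the footnote. The normalization map $\tilde{S}''\to S''$ is typically ramified, so one cannot simply compose; the key is to recognize the integral closure of $\mathcal{O}_{T^{\nu}}$ in $\mathbb{C}(S'')$ directly as a finite étale cover of $T^{\nu}$ via the SGA~1 correspondence, bypassing the potentially bad geometry of $S''$ itself.
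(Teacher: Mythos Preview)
Your overall architecture---reduce to $S''$ integral, establish $\mathbb{C}(S'')\subseteq L$, produce $\alpha\colon S^{\ur}\to S''$, pull back $\tau$, and verify $\alpha\circ q=g$ via the open diagonal of $f$---is sound and matches the paper's route. The gap is precisely where you locate it, in Step~2, but your proposed resolution does not work. From the generic fibre of $f$ being finite separable you cannot conclude that the normalization $\tilde S''\to T^\nu$ is \'etale: for instance $f\colon\mathbb A^1\setminus\{0\}\to\mathbb A^1$, $s\mapsto s^2$, is \'etale with separable generic fibre, yet the normalization of $\mathbb A^1$ in the corresponding quadratic extension is the ramified double cover of all of $\mathbb A^1$. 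Your appeal to \cite[Expos\'e~I, Cor.~10.3]{SGA1} is circular, since that correspondence translates between finite \'etale covers and unramified field extensions and cannot manufacture \'etaleness from generic data alone. You also slide from ``$f$ quasi-finite'' to ``$f$ finite'' without argument.

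The paper dissolves the obstacle rather than working around it. By \cite[Expos\'e~I, Cor.~9.11]{SGA1}, an unramified morphism from an integral scheme dominating a normal locally noetherian target is automatically \'etale; hence $f$ itself is \'etale and $S''$ is already normal and irreducible. There is no ``potentially bad geometry'' to bypass: $\tilde S''=S''$, and the \'etale-cover/normalization dictionary of \cite[Expos\'e~I, Cor.~10.2--10.3]{SGA1} then gives $\mathbb{C}(S'')\subseteq L$ and the factorization through $S^{\ur}$ directly. Your Steps~3--5 are essentially the unpacking of that last clause.
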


\begin{proof}
    By \cite[Exposé I, Corollaire~9.11]{SGA1}, the morphism $f$ is étale, and $S''$ is normal and irreducible. By \cite[Exposé I, Corollaire~10.2 and Corollaire~10.3]{SGA1}, it follows that $\sigma$ is the pullback of a section of the abelian scheme $\cA'/S^{\ur}$, thus it is unramified.\\
\end{proof}

Let $A$ and $A'$ be the generic fibres of $\mathcal A$ and $\mathcal A'$ respectively, since the induced map $ A\to A'$ is dominant, any section $\sigma:S\to\mathcal A$ induces a $\mathbb C(S)$ point of $A'$ that we denote by $\sigma'$. The closure of $\sigma'$ in $\mathcal A'$ is denoted by $\Sigma'$ and clearly $\mathbb C(\Sigma')$ embeds into $\mathbb C(S)$. Moreover recall that $\phi'\colon \Sigma'\to S^{\ur}$ is surjective.

\begin{prop}\label{prop:tor_ram}
Keep the notations fixed above and in \Cref{eq:relNormalization}.  A section $\sigma\colon S\to\mathcal A$ is unramified 
if and only if there exists a locally closed subscheme $\mathcal B/S^{\ur}$  of $\cA'/S^{\ur}$ with the following property: the restriction $\phi': \mathcal B \to S^{\ur}$ is finite unramified. If $\sigma: S \to \cA$ is a section such that $\Sigma'\subseteq\mathcal B$, then $\sigma$ is unramified.
\end{prop}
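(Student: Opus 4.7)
The plan is to analyze the irreducible closed subscheme $\Sigma'\subseteq\mathcal A'$ and exploit the maximality of $\mathbb C(S^{\ur})$ as an unramified extension of $\mathbb C(T^\nu)$ in $\mathbb C(S)$.

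For the direction $(\Rightarrow)$, assume $\sigma=q^*\sigma'$ for some algebraic section $\sigma'\colon S^{\ur}\to\mathcal A'$. I would take $\mathcal B:=\sigma'(S^{\ur})$, which is a closed subscheme of $\mathcal A'$; since $\phi'\circ\sigma'=\id_{S^{\ur}}$, the restriction $\phi'|_{\mathcal B}\colon\mathcal B\to S^{\ur}$ is an isomorphism, hence in particular finite and unramified. On generic fibres, the $\mathbb C(S)$-point of $A'$ induced by $\sigma$ is the base change to $\mathbb C(S)$ of the $\mathbb C(S^{\ur})$-point underlying $\sigma'$, so it factors through $\mathcal B$, and $\Sigma'\subseteq\mathcal B$ follows by taking closures.

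For the converse, which also yields the final implication in the statement, assume there exists $\mathcal B\subseteq\mathcal A'$ with $\phi'|_{\mathcal B}$ finite unramified and $\Sigma'\subseteq\mathcal B$. The first step is to observe that $\phi'|_{\Sigma'}\colon\Sigma'\to S^{\ur}$ is itself finite and unramified: indeed $\Sigma'$ is closed in $\mathcal A'$, hence closed in $\mathcal B$, so the inclusion $\Sigma'\hookrightarrow\mathcal B$ is a closed immersion (and thus unramified), and composing with $\phi'|_{\mathcal B}$ yields the claim. The second step is to compose with the unramified morphism $q'\colon S^{\ur}\to T^\nu$: the resulting map $\Sigma'\to T^\nu$ is finite unramified, and since $T^\nu$ is normal and $\Sigma'$ is reduced and irreducible, by \cite[Exposé~I, Corollaire~9.11]{SGA1} this morphism is in fact étale. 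In particular, $\Sigma'$ is normal, and therefore it coincides with the normalization of $T^\nu$ in the function field $\mathbb C(\Sigma')$; by the footnote's definition, $\mathbb C(\Sigma')/\mathbb C(T^\nu)$ is an unramified extension.

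The third step is the maximality argument. The residue field at the generic point of $\Sigma'$ embeds into $\mathbb C(S)$ via the $\mathbb C(S)$-point $\sigma'$, so $\mathbb C(\Sigma')\subseteq\mathbb C(S)$; by maximality of $\mathbb C(S^{\ur})$ among unramified extensions of $\mathbb C(T^\nu)$ in $\mathbb C(S)$, we obtain $\mathbb C(\Sigma')\subseteq\mathbb C(S^{\ur})$. The reverse inclusion follows from the dominance of $\phi'|_{\Sigma'}\colon\Sigma'\to S^{\ur}$. Hence $\mathbb C(\Sigma')=\mathbb C(S^{\ur})$, and $\phi'|_{\Sigma'}$ is a finite birational morphism between two normal irreducible varieties, so it is an isomorphism. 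Setting $\sigma':=(\phi'|_{\Sigma'})^{-1}\colon S^{\ur}\xrightarrow{\sim}\Sigma'\hookrightarrow\mathcal A'$ produces a global algebraic section of $\phi'$. Generic agreement of $\sigma$ and $q^*\sigma'$ is built into the construction, and uniqueness of the extension of rational sections of abelian schemes (\cite[Proposition~6.2]{LiuLor}) yields $\sigma=q^*\sigma'$, so $\sigma$ is unramified.

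The main obstacle I anticipate is the second step, namely bridging the scheme-theoretic notion of ``unramified morphism'' with the field-theoretic one appearing in the footnote. The delicate point is that one must know $\Sigma'$ itself is the normalization of $T^\nu$ in $\mathbb C(\Sigma')$, not just some finite cover birational to it; this is precisely where it is crucial that finite unramified morphisms to a normal target are étale in characteristic zero, and that étale morphisms preserve normality of the source.
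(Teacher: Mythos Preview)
Your proof is correct and pivots on the same key idea as the paper's, namely the maximality of $\mathbb C(S^{\ur})$ among the unramified extensions of $\mathbb C(T^\nu)$ in $\mathbb C(S)$, together with the fact that a finite unramified morphism to a normal base is \'etale. The only difference is organizational: the paper first passes to the connected component $\mathcal B'$ of $\mathcal B$ containing $\Sigma'$, applies \cite[Tag 0GS9]{stacks-project} to $\mathcal B'\to S^{\ur}$, and then argues by dimension that $\mathcal B'=\Sigma'$, whereas you observe directly that $\Sigma'$, being closed in $\mathcal A'$ and contained in $\mathcal B$, is a closed subscheme of $\mathcal B$, so $\phi'|_{\Sigma'}$ inherits finiteness and unramifiedness immediately. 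Your route is slightly more economical and bypasses the connected-component and dimension steps; the paper's route has the small advantage of never needing to check that the inclusion $\Sigma'\hookrightarrow\mathcal B$ is a genuine closed immersion of schemes (which is true, since $\Sigma'$ is reduced, but requires a moment's thought when $\mathcal B$ is only locally closed).
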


\proof
  Assume that $\sigma$ is unramified, so $\sigma=q^\ast\sigma'$ where $\sigma'\colon S^{\ur}\to \mathcal A'$ is  a section of $\phi'$. Then $\Sigma'=\sigma'(S^{\ur})$ and it is enough to take $\mathcal B:=\Sigma'$. Vice versa assume the existence of $\mathcal B$; by \cite[Tag 02IK]{stacks-project}  $\mathcal B$ is a Noetherian scheme (so a Noetherian topological space) then it has a finite number of irreducible components. This implies that all the connected components are closed and open. Therefore, the restriction of the morphism $\phi'\colon\mathcal B \to S^{\ur}$ to its connected component $\mathcal B'$ that contains $ \Sigma'$  is finite and unramified.  By \cite[Tag 0GS9]{stacks-project} (here we use the hypothesis that  $S^{\ur}$ is normal) we conclude that $\phi':\mathcal B'\to S^{\ur}$ is finite étale  and $\mathcal B'$ is irreducible. So $\mathcal B'=\Sigma'$ because they have the same dimension. We have shown that $\Sigma'$ is a connected component of $\mathcal B$. Now since $\mathbb C(\Sigma')\subseteq\mathbb C(S)$,  and $\phi':\Sigma'\to S^{\ur}$ is in particular unramified it follows that $\mathbb C(S^{\ur})=\mathbb C(\Sigma')$, this because $\mathbb C(S^{\ur})$ is the maximal unramified extension of $\mathbb C(T^{\nu})$ in $\mathbb C(S)$. It follows that  the morphism $\phi'\colon\Sigma'\to S^{\ur}$ is finite étale of degree $1$, hence an isomorphism. By \cite[Tag 024U]{stacks-project} there exists a section $\widetilde{\sigma}\colon S^{\ur}\to\mathcal B$ such that $\widetilde{\sigma}(S^{\ur})=\Sigma'$. In particular $\widetilde{\sigma}$ is a section of $\phi':\mathcal A'\to S^{\ur}$ and $q^\ast\widetilde{\sigma}=\sigma$.
\endproof

\begin{comment}
\begin{cor}\label{cor:tor_ram}
Any torsion section $\sigma\colon S\to \mathcal A$ is unramified.
\end{cor}
\proof
Assume that $\sigma$ has order $m$. The $m$-torsion scheme $\mathcal A'[m]\subseteq \mathcal A'$ is closed and finite \'etale over $S$ by \cite[V, Proposition 20.7]{Cor_Sil}. So it is enough to apply \Cref{prop:tor_ram} with $\mathcal B=\mathcal A'[m]$.
\endproof
\end{comment}

\begin{cor}\label{cor:divisible_ram}
Let $m \in \bZ_{>0}$ be a positive integer. A section $\sigma:S \to \cA$ is unramified if and only if $m\sigma$ is unramified. In particular, any torsion section is unramified
\end{cor}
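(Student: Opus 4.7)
The forward direction is routine: pullback via $q$ (as defined in \Cref{def:ram_sect}) is a group homomorphism $\Sigma(S^{\ur})\to\Sigma(S)$ because the fibrewise isomorphisms $\mathfrak q_s\colon\cA_s\xrightarrow{\cong}\cA'_{q(s)}$ are isomorphisms of abelian varieties. Hence if $\sigma=q^\ast\sigma'$ for some section $\sigma'\colon S^{\ur}\to\cA'$, then $m\sigma=q^\ast(m\sigma')$, which exhibits $m\sigma$ as unramified.

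For the converse, assume $m\sigma$ is unramified and write $m\sigma=q^\ast\tilde\tau$ for a section $\tilde\tau\colon S^{\ur}\to\cA'$. The strategy is to apply the criterion in the second part of \Cref{prop:tor_ram}: I will exhibit a locally closed subscheme $\mathcal B/S^{\ur}$ of $\cA'/S^{\ur}$ such that $\phi'\colon\mathcal B\to S^{\ur}$ is finite unramified and $\Sigma'\subseteq\mathcal B$. The natural choice is
\[
\mathcal B:=[m]^{-1}\bigl(\tilde\tau(S^{\ur})\bigr)\subseteq\cA',
\]
where $[m]\colon\cA'\to\cA'$ is multiplication by $m$. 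Since we work in characteristic zero and $m\neq 0$, the isogeny $[m]$ is finite étale; the image $\tilde\tau(S^{\ur})$ is closed in $\cA'$ and maps isomorphically onto $S^{\ur}$, so $\mathcal B\to S^{\ur}$ is finite étale, in particular finite unramified.

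It remains to verify $\Sigma'\subseteq\mathcal B$. The generic fibres fit in a dominant map $A\to A'$ of abelian varieties over $\bC(S)$, compatibly with $[m]$; pushing $\sigma$ forward to a $\bC(S)$-point $\sigma'$ of $A'$ and doing the same with $m\sigma=q^\ast\tilde\tau$ (whose image is exactly the $\bC(S)$-point induced by $\tilde\tau$ via $\bC(S^{\ur})\hookrightarrow\bC(S)$), one obtains the scheme-theoretic identity $[m](\sigma')=\tilde\tau$ at the generic point. Consequently the generic point $\sigma'$ of $\Sigma'$ lies in $\mathcal B$, and since $\mathcal B$ is closed in $\cA'$ one gets $\Sigma'\subseteq\mathcal B$ by taking closures. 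By \Cref{prop:tor_ram}, $\sigma$ is unramified.

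The last sentence follows immediately: if $\sigma$ has finite order $m$, then $m\sigma=\sigma_0$ is the zero section, which is the pullback along $q$ of the zero section of $\cA'\to S^{\ur}$, hence trivially unramified. The main subtlety is merely the compatibility of $q^\ast$ with the group law and the verification at the generic point that $\sigma'$ lies in $[m]^{-1}(\tilde\tau(S^{\ur}))$; everything else is formal once the criterion in \Cref{prop:tor_ram} is available.
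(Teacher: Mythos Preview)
Your proof is correct and follows essentially the same approach as the paper: both define $\mathcal B$ as the pullback of $[m]\colon\cA'\to\cA'$ along the section $\tilde\tau$ (equivalently, $[m]^{-1}(\tilde\tau(S^{\ur}))$), use that $[m]$ is finite \'etale to conclude $\mathcal B\to S^{\ur}$ is finite \'etale, verify $\Sigma'\subseteq\mathcal B$, and then invoke \Cref{prop:tor_ram}. The only cosmetic difference is that the paper checks the inclusion $\Sigma'\subseteq\mathcal B$ via the global identity $[m]\circ f\circ\sigma=\tilde\tau\circ q$ (with $f\colon\cA\to\cA'$ the natural map and $\Sigma'=f\circ\sigma(S)$), whereas you phrase it at the generic point and pass to the closure; these are equivalent.
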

\proof
If $\sigma$ is unramified then $\sigma=q^\ast\sigma'$ for $\sigma'\colon S^{\ur}\to\cA'$, so $m\sigma=q^\ast(m\sigma')$. Vice versa, denote by $f:\cA \to \cA'$ the upper left horizontal map in \Cref{eq:relNormalization}. Since $m\sigma$ is unramified, there exists a section $\psi:S^{\ur} \to \cA'$ such that $m\sigma=q^\ast\psi$. This condition implies that $[m]\circ f\circ\sigma=\psi\circ q$. Define $\Sigma':=f\circ\sigma(S)\subseteq \cA'$ and observe this corresponds to take the closure of the image of $\sigma$ in the generic fiber as above. Thus, $\Sigma'$ is irreducible and clearly $\bC(\Sigma')$ embeds into $\bC(S)$. The multiplication map $[m]:\cA' \to \cA'$ is  finite étale  by \cite[V, Proposition 20.7]{Cor_Sil}. Let us consider the locally closed subscheme $\mathcal B/S^{\ur}$ satisfying the following cartesian diagram:
\[
\begin{tikzcd}
    \mathcal B \arrow{r} \arrow{d}{\phi'} & \cA' \arrow{d}{[m]}\\
    S^{\ur} \arrow{r}{\psi} & \cA'.
\end{tikzcd}
\]
Observe that the map $\mathcal B \to S^{\ur}$ is étale, because it is base change of an étale morphism. Since $\Sigma' \subseteq \mathcal B$, the conclusion follows by \Cref{prop:tor_ram}.
\endproof

We now provide an example of unramified non-torsion section.

\begin{expl}\label{expl:tor_ram}
    Let us denote by $\mathcal L \to S_0$ the Legendre elliptic scheme, where $S_0:=\mathbb P_1 \setminus \{0,1,\infty\}$. Consider the product
    \[
    (\phi_1,\phi_2):\mathcal L \times \mathcal L \to S_0\times S_0,
    \]
    where we use the coordinates $(x,y)$ in $S_0\times S_0$. We let $X$ be the line in $S_0^2$ defined by $x+y= 2$, so this line is isomorphic under the first projection to $B:=\mathbb P_1 \setminus \{0,1,2,\infty\}$. By projection from $X$ to $B$, we may define a scheme $\phi: \cA \to B$ over $B$ whose fibers are of the type $\phi^{-1}(\lambda) = \mathcal L_\lambda \times \mathcal L_{2-\lambda}$.

    %products of two elliptic curves: to the parameter $\lambda \in C$ we associate the product $\mathcal L_\lambda \times \mathcal L_{2-\lambda}$ of the corresponding Legendre curves.

    The abelian scheme $\phi:\cA \to B$ may be interpreted as the fiber product of two elliptic schemes $\cE_1 \times_B \cE_2 \to B$, where $\cE_1 \to B$ is the Legendre scheme associating $\mathcal L_\lambda \to \lambda$ and $\cE_2 \to B$ is the Legendre scheme associating $\mathcal L_\lambda \to 2-\lambda$. Define the multisection:
    \[
    \tau(\lambda):=\left(\left(2,\sqrt{2(2-\lambda)}\right), \left(2, \sqrt{2\lambda}\right)\right).
    \]
    It becomes a non-torsion section on the base change induced by the degree $4$ field extension $\bC(B) \subseteq \bC(B)(\psi,\mu)$ where $\psi^2+\mu^2=2$ and $\mu^2=\lambda$. Such a field extension induces an unramified morphism onto $B$ and therefore this gives rise to an unramified (finite flat) modular map. This implies that $\tau$ is an unramified section.
\end{expl}

\subsection{Periods and abelian logarithms}\label{perAbLog}

Let us consider an abelian scheme $\phi:\cA \to S$ with a zero-section as above. Each fiber $\cA_s(\bC)$ is analytically isomorphic  to  a complex torus $\bC^g/\Lambda_s$ and for any subset $W\subseteq S(\mathbb C)$ we denote $\Lambda_{W}:=\bigsqcup_{s\in W} \Lambda_s$. The space $\Lie(\cA):=\bigsqcup_{s\in S(\mathbb C)}\Lie(\mathcal A_s(\mathbb C))$ has a natural structure of $g$-dimensional holomorphic vector bundle $f\colon\Lie(\cA)\to S(\bC)$ (it is actually  a complex Lie algebra bundle). By using the fiberwise exponential maps one can define a global  map $\exp\colon \Lie(\cA)\to \cA(\mathbb C)$. Let $\Theta_0\subset\cA(\mathbb C)$ be the image of the zero section of the abelian scheme, then  obviously
\begin{equation}\label{periodSheafEq}
    \exp^{-1}(\Theta_0)=\Lambda_S.
\end{equation}
Clearly $S(\mathbb C)$ can be covered by open simply connected subsets where the holomorphic vector bundle $f\colon\Lie(\cA)\to S(\bC)$ trivializes. Let $U\subseteq S(\bC)$ be any of such subsets and consider the induced holomorphic map $f\colon \Lambda_U\to U$; it is actually a fiber bundle with structure group $\textnormal{GL}(n,\bZ)$.  Since $U$ is simply connected, by \cite[Lemma 4.7]{DK} we conclude that $f\colon \Lambda_U\to U$ is a  topologically trivial fiber bundle. Thus we can find $2g$ continuous sections of $f$:
\begin{equation}\label{periods}
    \mathcal \omega_i:U\to\Lambda_{U}\,,\quad i=1,\ldots 2g
\end{equation}
such that $\{\omega_1(s),\ldots,\omega_{2g}(s)\}$ is a basis of periods for $\Lambda_s$ for any $s\in U$. Since $\Lambda_U \subseteq \Lie(\cA)_{|U}$, we can put periods into the following commutative diagram:
$$
\begin{tikzcd}
     & \Lie(\cA)_{|U} \arrow{d}{\exp_{|U}}\\
    S(\bC) \supset U \arrow{r}{\sigma_0{|U}} \arrow{ur}{\omega_i} & \cA_{|U},
\end{tikzcd}
$$
where $\sigma_0$ is the zero section. Since $\sigma_0$ is holomorphic and $\exp$ is a local biolomorphism, then the period functions defined in \Cref{periods} are holomorphic. The map
\begin{equation}\label{periodMap}
    \fP=(\omega_1,\ldots,\omega_{2g})
\end{equation}
is called a \emph{period map}; roughly speaking it selects a $\bZ$-basis for $\Lambda_s$ which varies holomorphically for $s\in U$. 

Let us consider now a non-torsion section $\sigma: S\to \cA$ of the abelian scheme. The set $U\subseteq S(\bC)$ is simply connected, therefore we can choose a holomorphic lifting $\ell_\sigma:U\to\Lie(\cA)$ of the restriction $\sigma_{|U}$; $\ell_\sigma$ is often called an \emph{abelian logarithm}. Thus for any $s\in U$ we can write uniquely
\begin{equation}\label{log}
\ell_\sigma(s)=\beta_1(s)\omega_1(s)+\ldots+\beta_{2g}\omega_{2g}(s)
\end{equation}
where $\beta_i: U\to\mathbb R$ is a real analytic function for $i=1,\ldots, 2g$. The map 
$\beta_\sigma:U\to\mathbb R^{2g}$
defined as $\beta_\sigma=(\beta_1,\ldots,\beta_{2g})$ is called the \emph{Betti map associated to the section $\sigma$}, whereas the $\beta_i$'s are the \emph{Betti coordinates}. Observe that the Betti map depends both on the choices of the period map $\mathcal P$ and of the abelian logarithm $\ell_\sigma$, but this is irrelevant for our aims. %The main feature of the Betti map is that $\beta_\sigma(s)\in\mathbb Q^{2g}$ if and only if $s$ is a torsion value of $\sigma$, so it allows us to treat the study of the torsion values of an abelian scheme as a transcendental Diophantine problem. Note that we need a non-torsion section $\sigma$ otherwise $\beta_\sigma$ would be obviously constant and equal to a rational point. Viceversa, we recall  that as a consequence of Manin's ``theorem of the kernel''  (see \cite{Man} or \cite{Ber}) if $\beta_\sigma$ is locally constant then $\sigma$ is torsion. Moreover, the fibers of $\beta_\sigma$ are complex submanifolds of $S(\mathbb C)$ (see \cite[Proposition 2.1]{CMZ}  or \cite[Section 4.2]{ACZ}).

As already mentioned, in this paper we are going to make also use of the sheaf of holomorphic sections $\Sigma^{\an}$. Let $\cLie(\cA)$ denote the locally free sheaf on $S(\mathbb C)$ associated to the vector bundle $\textrm{Lie}(\cA)\to S(\mathbb C)$. Then we have a morphism of sheaves $\psi\colon \cLie(\cA) \to \Sigma^{\an}$ defined in the following way on any open set $U\subseteq S(\mathbb C)$:
\begin{eqnarray*}
\psi_U\colon \cLie(\cA)(U)& \to& \Sigma^{\an}(U)\\
t &\mapsto& \exp \circ\, t.
\end{eqnarray*}
Observe that $\psi$ is surjective  because of the local  existence of abelian logarithms. Moreover the sheaf of periods $\Lambda_S$ is exactly the kernel of $\psi$, therefore we obtain the following short exact sequence of sheaves of abelian groups on $S$:
\begin{equation}\label{exSeqSch}
    0 \rightarrow \Lambda_S \rightarrow \cLie(\cA) \rightarrow \Sigma^{\an} \rightarrow 0.
\end{equation}

\section{Monodromy groups}
\subsection{Monodromy of abelian schemes}
Let $\cF$ be a sheaf of abelian groups over a topological space $X$ and denote by $\Gamma$ the functor of ``global sections'' which to $\cF$ associates $\Gamma(X,\cF) = \cF(X)$, with values in the category of abelian groups. Since the category of sheaves of abelian groups has sufficiently many injective objects, then the derived functors $R^k\Gamma$ do exist. They are generally written
\[
R^k\Gamma(\cF)=:H^k(X,\cF).
\]
Let $X$ be a locally contractible topological space and denote by $\underline{\bZ}$ the constant sheaf of abelian groups over $X$ with constant stalk $\bZ$. Then we have a canonical isomorphism
\[
H^q_{\textrm{sing}}(X,\bZ) \cong H^q(X,\underline{\bZ}),
\]
where we are considering the cohomology of $X$ with coefficients in the constant sheaf of stalk $\underline{\bZ}$ on the right, and the singular cohomology with coefficients in $\bZ$ on the left. The same result holds with $\bZ$ replaced by any commutative ring $G$.

Now, let $\phi:\cA \to S$ be an abelian scheme as defined above. We want to define a sheaf on the base $S$ containing information on periods of the abelian schemes, to this end let us consider the direct image functor $\phi_*$. Since $\phi_*$ is left exact and the category of sheaves of abelian groups has enough injectivities, the right derived functors of $\phi_*$ are well defined from the category of sheaves on $\cA$ to the category of sheaves on $S$: they are called \emph{higher direct image functors} and will be denoted by $R^k\phi_*$. For any $k \ge 0$, define the sheaf
\[
\cP_S^k:=R^k\phi_*\underline{\bZ}.
\]
By \cite[Proposition 8.1]{Hart}, this is the sheaf on $S$ associated to the presheaf
\[
U \mapsto H^k(\phi^{-1}(U),\underline{\bZ}_{|\phi^{-1}(U)}).
\]

Since $\phi:\cA \to S$ is a smooth surjective morphism of algebraic varieties over $\bC$, by \cite[Theorem 10.4]{Hart} then $\phi$ is a submersion. We can apply Ehresmann's Lemma to conclude that the proper submersion $\phi$ is a $C^\infty$-fiber bundle.

Since $S(\bC)$ is locally contractible, then for sufficiently small $U \subseteq S(\bC)$, the open sets $\cA_U:=\phi^{-1}(U)$ have the same homotopy type as the fibre $\cA_s(\bC)$ with $s \in U$. Using the invariance under homotopy, i.e. the fact that if $U$ is a contractible space then $H^k(\cA_s(\bC) \times U,\underline{\bZ}) = H^k(\cA_s(\bC), \underline{\bZ})$ for all $k \ge 0$, we deduce that the sheaves $\cP_S^k=R^k\phi_*\underline{\bZ}$ are locally constant sheaves (or equivalenty \emph{local systems}) on $S$ with stalk
\[
(\cP_S^k)_s=(R^k\phi_*\underline{\bZ})_s = H^k(\cA_s(\bC),\underline{\bZ}).
\]

Now, the fundamental group $\pi_1(S(\bC),s)$ acts via linear transformations on $(\cP_S^k)_s$: this gives rise to a monodromy representation. We are going to give a more detailed description of the monodromy representation but the rough idea is the following: pick a loop $\gamma(t)$ at $s$ and use a trivialization of the bundle along the loop to move vectors in $(\cP_S^k)_s$ along $(\cP^k_S)_{\gamma(t)}$, back to $(\cP_S^k)_s$. Actually this is a more general construction regarding monodromy representations associated to local systems, but in our case the monodromy action can be induced by homeomorphisms of the fiber. In order to be more precise, consider $\gamma \in \pi_1(S(\bC),s)$ represented by a loop $\gamma:[0,1] \to S(\bC)$ based at $s$. Consider the fibration $\cA_\gamma$ defined as the fiber product
$$\begin{tikzcd}
    \cA_\gamma \arrow{d}[swap]{\phi_\gamma} \arrow{r} & \cA(\bC) \arrow{d}{\phi}\\
    \left[0,1\right] \arrow{r}{\gamma} & S(\bC).
\end{tikzcd}$$
By the compactness of $[0,1]$, there exist real numbers $0\le \epsilon_i < \epsilon_{i+1}, 1\le i \le N$, such that $\epsilon_1 =0,\epsilon_N=1$, and $\phi_\gamma$ trivialises on $[\epsilon_i,\epsilon_{i+1}]$. By gluing local trivialisations above segments $[\epsilon_i,\epsilon_{i+1}]$, we can trivialise the fibration $\phi_\gamma: \cA_\gamma \to [0,1]$. From such a trivialization
\[
\cA_\gamma \cong \phi_\gamma^{-1}(0) \times [0,1],
\]
we deduce a homeomorphism $\psi: \phi_\gamma^{-1}(1) \cong \phi_\gamma^{-1}(0)$. These two spaces are canonically homeomorphic to the fibre $\cA_s(\bC)$.

The homeomorphism $\psi$ induces a group automorphism $\psi^k$ of $H^k(\cA_s(\bC),\underline{\bZ})$, for any $k\ge 0$. Therefore, we obtain the monodromy representations
\begin{equation}\label{eq:mon_repr}
\rho^k: \pi_1(S(\bC),s) \to \textrm{Aut}\, H^k(\cA_s(\bC),\underline{\bZ})
\end{equation}
defined by
\[
\rho^k(\gamma)(\eta) := \psi^k\eta, \qquad \eta \in H^k(\cA_s(\bC),\underline{\bZ}). 
\]
Thus, thanks to the fact the local systems we are considering is obtained using a fibration, the monodromy representation is in fact induced by homeomorphisms of the fibre $\cA_s(\bC)$ onto itself. In particular, it follows that the monodromy representation on the cohomology of the fibre $H^\bullet(\cA_s(\bC),\underline{\bZ})$ is compatible with the cup-product on $H^\bullet(\cA_s(\bC),\underline{\bZ})$, in the sense that each $\rho(\gamma) \in \textrm{Aut}\,H^\bullet(\cA_s(\bC),\underline{\bZ})$ is a ring automorphism for the ring structure given by the cup-product. %This fact is also a direct consequence of the construction of the local system $R^*\phi_*\bZ$, which shows that this local system is equipped with the structure given by the cup-product.

%In this case we have a proper differentiable fibration whose fibres are oriented and $2g$-dimensional, so the local system $R^{2g}\phi_*\bZ$ is trivial, and we deduce from the above that by Poincar\'e duality, we have an isomorphism
%\[
%R^g\phi_*\bZ \cong (R^g\phi_*\bZ)^*
%\]
%of local systems. Equivalently, writing $\langle \, , \, \rangle$ for the corresponding intersection form on the stalk
%\[
%H^g(\cA_s,\bZ) = R^g\phi_*\bZ_s,
%\]
%the $g$-monodromy representation takes values in $\textrm{Aut}(H^g(\cA_s,\bZ),\langle \, , \, \rangle)$.

Recall that by Poincar\'e duality we have isomorphisms $H^k(\cA_s(\bC),\underline{\bZ}) \cong H_{2g-k}(\cA_s(\bC),\bZ)$. Moreover, by the K\"unneth formula the groups $H_k(\cA_s(\bC),\bZ)$ and $H^k(\cA_s(\bC),\underline{\bZ})$ are free abelian groups of finite rank $\binom{2g}{k}$ for all $k = 1, \ldots, 2g$. Therefore, we can look at $\cP_S^{2g-1}$ as a local system with constant stalk $\bZ^{2g}$ to be identified with singular homology; we will simply write $\cP$ instead of $\cP_S^{2g-1}$. In other words, from now on we fix $k=2g-1$ and we only deal with the monodromy action induced on the first homology group identifying $\textrm{Aut}\,H_1(\cA_s(\bC),\bZ) \cong \textrm{GL}_{2g}(\bZ)$:
\[
\rho:\pi_1(S(\bC),s) \to \textrm{GL}_{2g}(\bZ).
\]
Furthermore, we have a non-degenerate intersection form
\[
\langle\cdot, \cdot\rangle: H_1(\cA_s(\bC),\bZ) \times H_{2g-1}(\cA_s(\bC),\bZ) \to \bZ.
\]
Any functional $H_{2g-1}(\cA_s(\bC),\bZ) \to \bZ$ is an intersection index of some homology class from $H_1(\cA_s(\bC),\bZ)$ and, if a class $A \in H_1(\cA_s(\bC),\bZ)$ is such that $\langle A,B \rangle = 0$ for any $B \in H_{2g-1}(\cA_s(\bC),\bZ)$, then $A$ vanishes as element of $H_1(\cA_s(\bC),\bZ)$. The abelian variety $\cA_s(\bC)$ has a polarization inducing an alternating form $E$ which takes integer values on the lattice $\Lambda_s$. Identifying $\Lambda_s=H_1(\cA_s(\bC),\bZ)$ and fixing a symplectic basis $\omega_1, \ldots, \omega_{2g}$ of $H_1(\cA_s(\bC),\bZ)$ for $E$ we obtain functionals
\[
\varphi_i:=E(\cdot,\omega_i):H_1(\cA_s(\bC),\bZ) \to \bZ, \qquad \textrm{for } i =1, \ldots, 2g.
\]
For any functional $\varphi_i$ there exists a $(2g-1)$-cycle $\eta_i \in H_{2g-1}(\cA_s(\bC),\bZ)$ such that
\[
\varphi_i(\omega_j)=\langle \eta_i,\omega_j \rangle.
\]
Since the intersection form is dual to the cup-product and since each $\rho(\gamma) \in \textrm{GL}_{2g}(\bZ)$ is a ring automorphism for the ring structure given by the cup-product, then $\rho(\gamma)$ has to preserve the intersection matrix induced by $E$, i.e.
\[
P=
\begin{pmatrix}
    0 & I_g\\
    -I_g & 0
\end{pmatrix}.
\]
In other words the image of the representation $\rho$ is contained in the following group
\[
\textrm{Sp}_{2g}(\bZ)=\{M \in \textrm{GL}_{2g}(\bZ) \mid M^\top PM=P\}.
\]
Thus, monodromy representation of periods introduced in \Cref{eq:mon_repr} can be actually thought as a map:
\begin{equation}\label{monPer}
    \rho: \pi_1(S(\bC),s) \to \textrm{Sp}_{2g}(\bZ).
\end{equation}

\begin{defn}
We define the \emph{monodromy group} of $\cA \to S$ at $s$ as $\textrm{Mon}(\cA,s):= \rho(\pi_1(S(\bC),s))$.
\end{defn}
Since $S(\bC)$ is path connected, all the groups $\textrm{Mon}(\cA,s)$ are conjugate when we vary the base point $s \in S(\bC)$: in other words, the monodromy group is defined up to an inner automorphism of the group $\textrm{GL}_{2g}(\bZ)$. Thus, fix once and for all a base point $s \in S(\bC)$ and denote the group $\textrm{Mon}(\cA,s)$ simply by $\textrm{Mon}(\cA)$, without writing any dependencies on the base point.

Since we can identify each first homology group of the fibers with the corresponding lattice $\Lambda_s$, then the sheaf $\cP$ can be identified with the sheaf $\Lambda_S$ defined in \Cref{periodSheafEq}; it will be called \emph{period sheaf} and will be denoted simply by $\Lambda$. In more concrete terms, notice that by construction studying the monodromy of the period sheaf corresponds to studying the analytic continuation of period functions (i.e. the coordinates of the period map defined in \Cref{periodMap}) along loops in $S(\bC)$ representing classes in the fundamental group based at $s$.

\subsection{The case of universal families}

For universal families, a more explicit description of the monodromy action on periods is described in \cite[Section 3]{GHDeg}. Let $\bH_g$ denote Siegel's upper half space, i.e., the symmetric matrices in $\textrm{Mat}_{g\times g}(\bC)$ with positive definite imaginary part. We have holomorphic uniformizing maps
\[
u_B: \bH_g \to \bA_g(\bC) \qquad \textrm{and} \qquad u: \bC_g \times \bH_g \to  \fA_g(\bC).
\]
Recall that $\textrm{Sp}_{2g}(\bR)$, the group of real points of the symplectic group, acts on $\bH_g$. Let $x \in \bA_g(\bC)$ and fix $\tau \in \bH_g$ such that $x = u_B(\tau)$. If $1_g$ denotes the $g\times g$ unit matrix, then the columns of $(\tau, 1_g)$ are an $\bR$-basis of $\bC^g$ and we have $\fA_{g,x}(\bC) \cong \bC^g/(\tau \bZ^g + \bZ^g)$. The period lattice basis $(\tau, 1_g)$ allows us to identify $H_1(\fA_{g,x}(\bC),\bZ)$ with $\bZ^{2g}$. Let us consider a loop $\gamma$ in $\bA_g(\bC)$ based at $x$ representing a class of $\pi_1(\bA_g(\bC),x)$. Then a lift $\widetilde{\gamma}$ of $\gamma$ to $\bH_g$ starting at $\tau$ ends at $M\tau \in \bH_g$ for some
\[
M = \begin{pmatrix}
    a & b\\
    c & d
\end{pmatrix}
\in \textrm{Sp}_{2g}(\bZ).
\]
Then $M\tau$ is the period matrix of the abelian variety $\bC^g/(M\tau\bZ^g + \bZ^g)$ which is isomorphic to $\bC^g/(\tau\bZ^g + \bZ^g)$: more precisely, the isomorphism $\bC^g/(\tau\bZ^g+\bZ^g) \to \bC^g/(M\tau\bZ^g+ \bZ^g)$ is induced by the map
\[
\tau u + v \mapsto ((c\tau+d)^\top)^{-1}(\tau u + v) = (M\tau,1_g)(M^\top)^{-1}\begin{pmatrix}
    u\\
    v
\end{pmatrix},
\]
where $u, v \in \bR^g$ are column vectors; in what follows sometimes we will interpret $u,v$ as row vectors and we will consider the transposition of the previous relation. Therefore, the monodromy representation expressed in these coordinates is given by
\begin{equation}\label{columnAction}
    \begin{aligned}
        \rho: \pi_1(\bA_g(\bC)),x) &\to \textrm{Sp}_{2g}(\bZ)\\
        [\gamma] &\mapsto (M^\top)^{-1}.
    \end{aligned}
\end{equation}
Notice that the monodromy action is a right action on period functions, it can be obviously interpreted as a left action by putting $h\cdot (M\tau,1_g):=(M\tau,1_g)\rho(h)$.

The previous considerations about periods of universal families allow us to obtain some results on periods of the abelian scheme $\phi:\cA \to S$: we are able to construct period functions of $\phi: \cA \to S$ by means of periods of the universal family $\pi:\fA_g \to \bA_g$. Let $V \subseteq \bA_g(\bC)$ be a simply connected open set of an open covering of $\bA_g$ where holomorphic period functions do exist for the universal abelian scheme, see \Cref{periods}; denote by $\omega^{\fA_g}_{V,1}, \ldots, \omega^{\fA_g}_{V,2g}$ such local (holomorphic) period functions. Fix a base point $s \in S(\bC)$, not a ramification point for the modular map $p:S \rightarrow \bA_g$, and let $x = p(s) \in \bA_g(\bC)$. In a connected and simply connected neighborhood $U$ of $s$ in $S(\bC)$ such that $p(U)=V$, we can holomorphically define a basis $\omega_{U,1}, \ldots, \omega_{U,2g}$ of the period lattices by the equations
\begin{equation}\label{periodFunctions}
    \omega_{U,i} = \omega^{\fA_g}_{V,i}\circ p.
\end{equation}
Locally on suitable open subsets $U \subset S(\bC)$, this gives a basis for $\Lambda_s$ made up of holomorphic functions $\omega_{U,1}, \ldots, \omega_{U,2g}:U \rightarrow \bC^g$.

\begin{rem}\label{densityMon}
    Recall that the monodromy group of the universal family $\textrm{Mon}(\fA_g)$ is a finite index subgroup of $\textrm{Sp}_{2g}(\bZ)$. When the modular map $p:S \to \bA_g$ is finite, the monodromy group $\textrm{Mon}(\cA)$ of $\cA \to S$ is still a finite index subgroup of $\textrm{Sp}_{2g}(\bZ)$ thanks to \Cref{periodFunctions}; in particular, $\textrm{Mon}(\cA)$ is Zariski-dense in $\textrm{Sp}_{2g}(\bZ)$. Moreover, when the image of the modular map $T$ is not contained in any proper special subvariety of $\bA_g$, the monodromy group $\textrm{Mon}(\cA)$ is still Zariski-dense in $\textrm{Sp}_{2g}(\bZ)$ by \cite[Remark 10.1.2]{ACZ}. %In general, the monodromy group $\textrm{Mon}(\cA)$ is Zariski-dense in the monodromy group of the family defined over the smallest special subvariety of $\bA_g$ containing $p(S)$.
\end{rem}

\begin{lem}\label{irrSp}
    The natural action of the group $\textrm{Sp}_{2g}(\bZ)$ on $\bQ^{2g}$ is irreducible.
\end{lem}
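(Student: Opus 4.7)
The plan is to give a direct, elementary argument using symplectic transvections. For any $u \in \mathbb{Z}^{2g}$, consider the endomorphism $T_u \colon v \mapsto v + \omega(v,u)\, u$ of $\mathbb{Q}^{2g}$, where $\omega$ is the standard symplectic form encoded by the matrix $P$ appearing in the excerpt. A short bilinear computation using the antisymmetry of $\omega$ shows that $T_u$ preserves $\omega$, and its matrix has integer entries when $u$ is an integer vector, so $T_u \in \mathrm{Sp}_{2g}(\mathbb{Z})$.

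Now suppose $V \subseteq \mathbb{Q}^{2g}$ is a nonzero $\mathrm{Sp}_{2g}(\mathbb{Z})$-stable subspace, and fix $0 \neq v \in V$. For every $u \in \mathbb{Z}^{2g}$ one has $\omega(v,u)\, u = T_u(v) - v \in V$. Dividing by the rational scalar $\omega(v,u)$ whenever it is nonzero shows that $V$ already contains every integer vector $u$ on which the linear form $\omega(v,\cdot)$ does not vanish.

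Since $v \neq 0$ and $\omega$ is non-degenerate, the form $\omega(v,\cdot)$ is nonzero, so its kernel is a proper hyperplane $H \subset \mathbb{Q}^{2g}$. It then suffices to observe that the lattice points $\mathbb{Z}^{2g} \setminus H$ span $\mathbb{Q}^{2g}$ over $\mathbb{Q}$, which forces $V = \mathbb{Q}^{2g}$.

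The only mildly delicate point is the spanning statement for $\mathbb{Z}^{2g} \setminus H$, but it can be verified directly on the standard basis $e_1, \ldots, e_{2g}$: at least one $e_i$ lies outside $H$, and for each remaining $e_j \in H$ the translate $e_i + e_j$ also lies outside $H$; together these vectors form a $\mathbb{Q}$-basis of $\mathbb{Q}^{2g}$ contained in $V$. An alternative, more abstract route would be to invoke the Zariski density of $\mathrm{Sp}_{2g}(\mathbb{Z})$ in the algebraic group $\mathrm{Sp}_{2g}$ (e.g.\ by Borel density) together with the well-known irreducibility of the standard representation of $\mathrm{Sp}_{2g}$, but the transvection argument above is self-contained and avoids any appeal to external machinery.
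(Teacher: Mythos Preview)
Your proof is correct and takes a genuinely different route from the paper. The paper's argument is high-powered and indirect: it invokes Borel--Harish-Chandra to see that $\mathrm{Sp}_{2g}(\mathbb{Z})$ is a lattice in $\mathrm{Sp}_{2g}(\mathbb{R})$, then the Borel density theorem to conclude Zariski density in $\mathrm{Sp}_{2g}$, and finally a reference for the irreducibility of the standard representation of $\mathrm{Sp}_{2g}(\mathbb{Q})$ on $\mathbb{Q}^{2g}$ --- exactly the ``abstract route'' you sketch at the end of your write-up. Your transvection argument bypasses all of this machinery: it is self-contained, needs no external input beyond the antisymmetry and non-degeneracy of $\omega$, and shows directly that any nonzero invariant subspace already contains every standard basis vector. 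The paper's approach has the virtue of plugging into a general framework (Zariski density of arithmetic lattices) that applies uniformly to many groups, but for this particular lemma your argument is both shorter and more transparent.

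One small exposition point in your final paragraph: you single out one $e_i \notin H$ and then form $e_i + e_j$ only for those remaining $e_j$ lying in $H$, asserting that these together give a basis. As written this ignores any other $e_k \notin H$ with $k \neq i$, so the displayed collection may have fewer than $2g$ vectors. The repair is immediate --- those $e_k$ are already in $V$ directly, or equivalently one simply notes that for every $j$ either $e_j \in V$ or $e_j = (e_i + e_j) - e_i \in V$ --- so this is a wording issue, not a gap in the argument.
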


\begin{proof}
    Assume that there exists a non-trivial subvector space $W$ of $\bQ^{2g}$ which is fixed by the action of $\textrm{Sp}_{2g}(\bZ)$. Since the symplectic group $\textrm{Sp}_{2g}$ is a simple algebraic Lie group defined over $\bQ$, by a theorem of Borel and Harish-Chandra \cite[Theorem 7.8]{BorHC} the group $\textrm{Sp}_{2g}(\bZ)$ is a lattice in $\textrm{Sp}_{2g}(\bR)$. By Borel density theorem (see \cite{Bor}) we have that $\textrm{Sp}_{2g}(\bZ)$ is Zariski dense in $\textrm{Sp}_{2g}(\bR)$, and so it is Zariski dense in $\textrm{Sp}_{2g}(\bQ)$. This implies that $W$ is fixed by the action of $\textrm{Sp}_{2g}(\bQ)$. Since the action of $\textrm{Sp}_{2g}(\bQ)$ on $\bQ^{2g}$ is irreducible (see \cite[Proposition 3.2]{Gro}), we get $W=\bQ^{2g}$ which concludes the proof.\\
\end{proof}

\begin{prop}\label{irredAct}
    If the image of the modular map $T$ is not contained in any proper special subvariety of $\bA_g$, the action of the monodromy group $\textrm{Mon}(\cA)$ on the lattice of periods is irreducible.
\end{prop}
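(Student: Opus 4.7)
My plan is to deduce \Cref{irredAct} as a direct combination of \Cref{densityMon} and (the proof of) \Cref{irrSp}, plus a standard Zariski-closure argument for the stabilizer of a subspace.

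First, I would argue by contradiction: suppose there is a nonzero proper subgroup of the lattice of periods which is stabilized by $\mathrm{Mon}(\cA)$. Tensoring with $\bQ$, this yields a nonzero proper $\bQ$-subspace $W \subsetneq \bQ^{2g}$ that is stable under the action of $\mathrm{Mon}(\cA) \subseteq \mathrm{Sp}_{2g}(\bZ)$.

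Next, I would invoke \Cref{densityMon}: because $T$ is not contained in any proper special subvariety of $\bA_g$, the monodromy group $\mathrm{Mon}(\cA)$ is Zariski-dense in the algebraic group $\mathrm{Sp}_{2g}$. The stabilizer
\[
\mathrm{Stab}(W) = \{\, M \in \mathrm{Sp}_{2g} : M \cdot W \subseteq W \,\}
\]
is a Zariski-closed subgroup of $\mathrm{Sp}_{2g}$. Since it contains $\mathrm{Mon}(\cA)$, by Zariski-density it must equal the whole of $\mathrm{Sp}_{2g}$. In particular, $W$ is stable under the action of $\mathrm{Sp}_{2g}(\bQ)$ on $\bQ^{2g}$.

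Finally, the proof of \Cref{irrSp} established (via Borel and Harish-Chandra's theorem and Borel's density theorem) that the action of $\mathrm{Sp}_{2g}(\bQ)$ on $\bQ^{2g}$ is irreducible. Hence $W = 0$ or $W = \bQ^{2g}$, contradicting our assumption. This forces the $\mathrm{Mon}(\cA)$-action on the lattice of periods to be irreducible. I do not anticipate any serious obstacle here: the whole content of the proposition is packaged in the two preceding results, and the only glue needed is the remark that the stabilizer of a subspace is a Zariski-closed subgroup, which is elementary.
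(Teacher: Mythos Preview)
Your proof is correct and follows exactly the approach the paper intends: the paper's own proof simply states that the result is ``an easy consequence of \Cref{densityMon} and \Cref{irrSp}'', and you have spelled out precisely this deduction via the Zariski-closed stabilizer argument. One small remark: you could invoke \Cref{irrSp} directly (irreducibility of the $\mathrm{Sp}_{2g}(\bZ)$-action) rather than going back to the $\mathrm{Sp}_{2g}(\bQ)$-action used inside its proof, but the logic is the same either way.
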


\begin{proof}
    This is an easy consequence of \Cref{densityMon} and \Cref{irrSp}.\\
\end{proof}

\begin{rem}\label{wellDefPeriods}
    The previous considerations yield some conclusion on the global definition of periods on the base $S(\bC)$. We have seen that period functions $\omega_1, \ldots, \omega_{2g}$ can be locally defined on simply connected open sets $U \subseteq S(\bC)$ via \Cref{periodFunctions} for any abelian scheme $\cA\rightarrow S$. These functions may be analytically continued through the whole of $S(\bC)$, but it's impossible to globally define them: they turn out to be multi-valued functions, i.e. they have quite nontrivial monodromy when traveling along closed paths. 

	To be more precise, since the group $\textrm{Mon}(\cA)$ is non-trivial then the functions $\omega_1, \ldots, \omega_{2g}$ cannot be all defined continuously on the whole of $S(\bC)$. Moreover, when the action of $\textrm{Mon}(\cA)$ on the lattice of periods is irreducible, neither one of $\omega_1, \ldots, \omega_{2g}$ nor any single non-zero element of the period lattice can be defined on the whole of $S(\bC)$. This last thing is also proved in \cite[Lemma 5.6]{GH2} for any abelian scheme over a curve without using irriducibility properties of the monodromy action.
\end{rem} 

\subsection{Monodromy groups of sections}

Let us consider the exponential map $\exp: \Lie(\cA) \to \cA(\mathbb C)$ defined in \Cref{perAbLog}. Notice that it is a topological cover. Thus, it induces a monodromy action of the fundamental group $\pi_1(\cA(\bC),p)$ on the fiber $\exp^{-1}(p)$, where we assume $\phi(p)=s$. Let $\sigma: S \to \cA$ be a section of the abelian scheme, we obtain a monodromy action of $\pi_1(S(\bC),s)$ on the fiber $\exp^{-1}(\sigma(s))$.

Recalling the definition of logarithm given in \Cref{log}, the set $\exp^{-1}(\sigma(s))$ is the set of all possible determinations of the abelian logarithm of $\sigma$ at the point $s$. Every two such determinations differ each other by an element of the lattice $\Lambda_s$, therefore, after fixing a branch $\log_\sigma(s)\in\exp^{-1}(\sigma(s)) $ of the logarithm at $s$, we get the monodromy action
\begin{equation*}
    \rho': \pi_1(S(\bC),s) \to \Aut(\mathbb Z^{2g})\,.   
\end{equation*}
The representation $\rho'$  induces a map (which is not a group homomorphism):
\begin{equation}\label{monLog}
\begin{aligned}
    c: \pi_1(S(\bC),s) &\to \bZ^{2g}\\
    h &\mapsto \rho'(h)(\log_\sigma(s))\,.
\end{aligned}
\end{equation}
The function $c$ can be described explicitly. In fact, any element $h \in \pi_1(S(\bC),s)$ acts by analytic continuation on $\log_\sigma(s)$ in the following way:
\begin{equation}\label{actionLog}
	h\cdot\log_\sigma=\log_\sigma + u_1^h\omega_1 + \cdots + u_{2g}^h\omega_{2g},
\end{equation}
where $u_1^h,\ldots, u_{2g}^h \in \bZ$. Then $c(h)=(u^h_1, \ldots, u^h_{2g})^\top$. Again, since $S(\mathbb C)$ is path connected we can fix the base point $s\in S(\mathbb C)$ once and for all.  

Moreover, if we choose $h_1,h_2 \in \pi_1(S(\bC),s)$ we can consider the action of the product $h_1h_2$. Recalling \Cref{columnAction}, we have
\begin{align*}
	\log_\sigma \xrightarrow{h_2} \log_\sigma + (u_1^{h_2},\ldots, u_{2g}^{h_2})\left(\begin{matrix} \omega_1\\ \vdots \\ \omega_{2g}\end{matrix}\right) \xrightarrow{h_1} \log_\sigma + (u_1^{h_1},\ldots, u_{2g}^{h_1})\left(\begin{matrix} \omega_1\\ \vdots \\ \omega_{2g}\end{matrix}\right) + (u_1^{h_2},\ldots, u_{2g}^{h_2})\rho(h_1)^\top\left(\begin{matrix} \omega_1\\ \vdots \\ \omega_{2g}\end{matrix}\right).
\end{align*}
Therefore, we obtain the cocycle relation
\begin{equation}\label{eq:cocycle}
    (u_1^{h_1h_2},\ldots, u_{2g}^{h_1h_2}) = (u_1^{h_1},\ldots, u_{2g}^{h_1}) + (u_1^{h_2},\ldots, u_{2g}^{h_2})\rho(h_1)^\top.
\end{equation}

Now, let us look at the simultaneous monodromy action of $\pi_1(S(\bC),s)$  on periods and logarithm. Thanks to \Cref{eq:cocycle}, using the functions described in \Cref{monPer} and \Cref{monLog}, we can provide a new representation
\[
\theta_\sigma: \pi_1(S(\bC),s) \rightarrow \textrm{SL}_{2g+1}(\bZ),
\]
where every matrix $\theta_\sigma(h)$ is of the form
\begin{equation}\label{simultaneousRepresentation}
	\theta_\sigma(h)=\left( \begin{matrix} \rho(h) & c(h) \\ 0 & 1 \end{matrix} \right),
\end{equation}
for $\rho(h) \in \textrm{Sp}_{2g}(\bZ), c(h) \in \bZ^{2g}$. Note that the matrix $\rho(h)$ acts on the periods and does not depend on $\sigma$. Moreover, the vector $c(h)$ encodes the action of $h$ on determinations of the logarithm $\log_\sigma$. 

\begin{defn}
The monodromy group of the section $\sigma$ is $M_\sigma:=\theta_\sigma(\pi_1(S(\bC),s))$.
\end{defn}

Let's continue considering an abelian scheme $\cA \rightarrow S$ and a non-zero section $\sigma:S \rightarrow \cA$. Generally neither a basis of periods nor a logarithm $\log_\sigma$ can be globally defined on the whole of $S(\bC)$ (see \Cref{wellDefPeriods} and \Cref{wellDefLog} below), but obviously they can be globally defined on the universal cover of $S(\bC)$. Studying the related monodromy problems corresponds to finding out the minimal topological cover of $S(\bC)$ on which both a basis of the periods and a logarithm of the section can be defined. With this in mind, we first call $S^* \rightarrow S(\bC)$ the minimal topological cover of $S(\bC)$ on which a basis for the period lattice can be globally defined and we set $S_\sigma \rightarrow S^*$ to be the minimal cover of $S^*$ where one can define the logarithm of $\sigma$. The tower of covers is represented in the diagram:
\begin{equation}\label{monodromyDiagram}
	S_\sigma \rightarrow S^* \rightarrow S(\bC).
\end{equation}
In particular, the group $\textnormal{Mon}(\cA)$ corresponds to the Galois group of the covering map $S^* \rightarrow S(\bC)$, while the group $M_\sigma$ corresponds to the Galois group of the covering map $S_\sigma\rightarrow S(\bC)$. Our interest is in studying the relative monodromy of the logarithm of a section with respect to the monodromy of periods, i.e. studying the covering map $S_\sigma \rightarrow S^*$. Topologically, this is the same as looking at the variation of logarithm via analytic continuation along loops on $S(\bC)$ which leave periods unchanged. With the notations of \Cref{monPer} and \Cref{simultaneousRepresentation} this corresponds to studying group defined below:

\begin{defn}
$M_\sigma^{\textnormal{rel}}:=\theta_\sigma(\ker{\rho})$ is called the \emph{relative monodromy group of $\sigma$}.
\end{defn}
The group $M_\sigma^{\textnormal{rel}}$ gives information on the ``pure monodromy'' of the Betti map, and this is a strong knowledge for obtaining transcendence results which for example allow to count torsion points in issues with ``unlikely intersection'' flavour. Note that $M_\sigma^{\textnormal{rel}}$ is clearly a subgroup of $\bZ^{2g}$; it is useful for applications knowing when this subgroup is trivial and how large it is.

\begin{expl}\label{exaTorsSec}
	Let us illustrate what happens in a simple case, i.e. when the section is torsion. Thus, let $\sigma:S \rightarrow \cA$ be a torsion section. By the properties of Betti map, any logarithm of $\sigma$ is a rational constant combination of periods; i.e.
	$$
	\log_\sigma = q_1 \omega_1 + \cdots + q_{2g} \omega_{2g},
	$$
	where $q_1, \ldots, q_{2g} \in \bQ$. Therefore, a loop which leaves unchanged periods via analytic continuation, leaves also unchanged the logarithm of such a section. In other words, the cover $S_\sigma \rightarrow S^*$ is trivial in this case and then $M_\sigma^\textnormal{rel}\cong \{0\}$.
\end{expl}

\begin{rem}
	Note that the Zariski-closures of the discrete groups $\textnormal{Mon}(\cA)$ and $M_\sigma$ as far as the kernel of the projection $\textrm{ker}(\overline{M_\sigma} \to \overline{\textnormal{Mon}(\cA)})$ are the differential Galois groups of some Picard-Vessiot extensions of $\bC(S)$ obtained with $\omega_1, \ldots, \omega_{2g}, \log_\sigma$ and their derivatives (see \cite{CH} for further details about differential Galois theory and Picard-Vessiot extensions). In these terms, the differential Galois group $\textrm{ker}(\overline{M_\sigma} \to \overline{\textnormal{Mon}(\cA)})$ was just studied in \cite{André}, \cite{André2}, \cite{Ber2}. Anyway, those results give no information on the relative monodromy of the logarithm over $S^*$ because they involve the Zariski closures of $M_{\sigma}$ and $\textnormal{Mon}(\cA)$: in fact, in \cite{CZ} the authors pointed out that it may happen that the group $\textrm{ker}(\overline{M_\sigma} \to \overline{\textnormal{Mon}(\cA)})$ contains quite limited information on $M_{\sigma}^\textrm{rel}$ since the former may be larger than expected in comparison with the latter. Thus, in order to obtain information on $M_\sigma^\textnormal{rel}$, we have to introduce considerations of different nature with respect to Bertrand's and André's theorems.
\end{rem}

Similarly to period functions, once we have locally defined the logarithm of a section as in \Cref{log} we can think about its analytic continuation through the whole of $S(\bC)$. As a first example, let us focus on the zero-section $\sigma_0$ which associates to each $s \in S(\bC)$ the origin $O_s$ of the corresponding fiber $\cA_s$. A logarithm of $\sigma_0$ is given by the zero function
\[
\log_{\sigma_0}:S(\bC) \rightarrow \bC^g, \qquad \log_{\sigma_0}(s)=0 \textrm{ for each } s \in S(\bC).
\]
Thus, in this case we can find a globally defined logarithm on the whole of $S(\bC)$: in fact it has no monodromy. For algebraic sections, this is the only case in which such a global logarithm exists. For the sake of completeness, we briefly resume a proof of this fact.

\begin{rem}\label{wellDefLog}
	Let $\cA \to S$ be an abelian scheme. We use Lang-Néron theorem \cite[Thorem 1]{LanNér} to show that a non-zero algebraic section $\sigma:S \rightarrow \cA$ which is not contained in the fixed part cannot admit a globally defined logarithm on $S(\bC)$. In fact, assume by contradiction that $\log_\sigma$ exists globally on $S(\bC)$, then $\frac{\sigma}{n}:= \exp \circ \left(\frac{1}{n}\log_\sigma\right)$ for any $n \in \bZ \setminus \{0\}$ is a holomorphic section such that $n \cdot \frac{\sigma}{n}=\sigma$. This means that $\sigma$ is infinitely divisible in $\Sigma(S)$ contradicting Lang-Néron theorem. 
\end{rem}

Fix a determination of $\log_\sigma$ on an open set $U$ containing the base point $s$ and use the notation $G:=\pi_1(S(\bC),s)$. Thanks to \Cref{eq:cocycle}, the map $c: h \mapsto (u_1^h,\ldots, u_{2g}^h)$ \emph{is a cocycle for the action of $G$ on $\bZ^{2g}$ described in \Cref{actionLog}}. Moreover, in the next proposition we prove that a section admitting a globally defined logarithm is characterized by the fact that the map $h \mapsto (u_1^h,\ldots, u_{2g}^h)$ is a coboundary for the aforementioned action, i.e. there exists a fixed vector $(u_1,\ldots,u_{2g}) \in \bZ^{2g}$ such that $(u_1^h,\ldots, u_{2g}^h) = (u_1,\ldots,u_{2g})(\rho(h)^\top-1_{2g})$ for all $h \in G$. Thus, the just defined cocycle determines an element in the cohomology group $H^1(G,\bZ^{2g})$ that describes the obstruction for a(n analytic) section to have a globally defined logarithm.

\begin{prop}\label{coboundary}
	Let $\sigma:S\rightarrow \cA$ be a holomorphic section and $\log_\sigma$ a determination of its logarithm over an open set $U\subset S(\bC)$. The section admits a globally defined logarithm on $S(\bC)$ if and only if the associated cocycle $h \mapsto (u_1^h,\ldots, u_{2g}^h)$ is a coboundary.
\end{prop}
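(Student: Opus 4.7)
The plan is to interpret the question cohomologically: a global holomorphic lift of $\sigma$ exists precisely when the local determination $\log_\sigma$ can be corrected by a flat integer combination of periods so that its analytic continuation around every loop is trivial. Both directions reduce to a direct computation using the action formula \Cref{actionLog} together with the transformation rule $h\cdot(\omega_1,\ldots,\omega_{2g})^\top = \rho(h)^\top(\omega_1,\ldots,\omega_{2g})^\top$ implicit in the derivation of \Cref{eq:cocycle} (and consistent with \Cref{columnAction}). Throughout, I may shrink $U$ to assume it is simply connected and contains the base point $s$.

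For $(\Rightarrow)$, suppose a global lift $L\colon S(\bC)\to\Lie(\cA)$ with $\exp\circ L=\sigma$ exists. On $U$, the difference $L|_U - \log_\sigma$ is a holomorphic section of the locally constant period sheaf $\Lambda$, hence equals $\vec u^{\,\top}\vec\omega$ for a unique constant $\vec u \in \bZ^{2g}$, where $\vec\omega=(\omega_1,\ldots,\omega_{2g})^\top$. Since $L$ is single-valued, analytic continuation along any loop $h\in\pi_1(S(\bC),s)$ returns $L|_U$ to itself. Applying $h$ to $L|_U=\log_\sigma+\vec u^{\,\top}\vec\omega$ and using the two action rules above, the identity $h\cdot L|_U=L|_U$ becomes
\[
c(h)^\top\vec\omega+\vec u^{\,\top}\rho(h)^\top\vec\omega = \vec u^{\,\top}\vec\omega.
\]
Since the $\omega_i$ form a $\bZ$-basis of the period lattice at each point, integer coefficients are unique, so $c(h)^\top=\vec u^{\,\top}(I_{2g}-\rho(h)^\top)$; replacing $\vec u$ with $-\vec u$ exhibits $c$ as a coboundary in the sense of the proposition.

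For $(\Leftarrow)$, given $\vec u\in\bZ^{2g}$ with $c(h)^\top=\vec u^{\,\top}(\rho(h)^\top-I_{2g})$ for every $h$, I would define on $U$ the new determination $L|_U:=\log_\sigma-\vec u^{\,\top}\vec\omega$. This is still a lift of $\sigma|_U$ through $\exp$, since $\vec u^{\,\top}\vec\omega$ is a section of $\Lambda$. A direct calculation analogous to the one above, using the cocycle relation \Cref{eq:cocycle}, shows that the monodromy cocycle associated to $L|_U$ vanishes identically. Since $\exp$ is a local biholomorphism, $L|_U$ admits a unique holomorphic continuation along every path in $S(\bC)$ starting at a point of $U$, and the vanishing of the cocycle is exactly the hypothesis of the classical monodromy theorem: the continuation depends only on the endpoint. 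Hence $L|_U$ extends to a global holomorphic map $L\colon S(\bC)\to\Lie(\cA)$ with $\exp\circ L=\sigma$.

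The only delicate point is a careful bookkeeping of the right-action conventions (row vs.\ column vectors and the transpose in $\rho(h)^\top$), so that the correction $-\vec u^{\,\top}\vec\omega$ transforms precisely in the way required to cancel $c(h)^\top\vec\omega$. Once this is settled, the whole content of the proposition is that the class $[c]\in H^1(\pi_1(S(\bC),s),\bZ^{2g})$ is the obstruction to extending a local section of $\Sigma^{\an}$ to a global section of $\cLie(\cA)$ in the connecting homomorphism of the short exact sequence \Cref{exSeqSch}.
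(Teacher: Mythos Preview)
Your proof is correct and follows essentially the same approach as the paper: in both directions one writes the difference between the chosen determination and a (putative) global lift as an integer combination of periods, then computes the monodromy action using \Cref{actionLog} to see that invariance of the new determination is exactly the coboundary condition. Your invocation of the monodromy theorem in the converse direction makes explicit what the paper leaves implicit when it concludes ``$\ell$ is a globally defined logarithm'' from $h\cdot\ell=\ell$.
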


\begin{proof}
	Suppose that $\sigma$ admits a globally defined logarithm $\ell:S(\bC) \rightarrow \bC^g$. Then the two determinations $\ell$ and $\log_\sigma$ over $U$ differ by a period $\omega:=n_1\omega_1+\cdots+n_{2g}\omega_{2g}$, i.e. $\log_\sigma=\ell + n_1\omega_1 + \cdots + n_{2g}\omega_{2g}$. For $h \in G$, we have
	\begin{align*}
		h\cdot\log_\sigma &= h\cdot (\ell + n_1\omega_1 + \cdots + n_{2g}\omega_{2g}) = \ell + (n_1,\ldots,n_{2g})\rho(h)^\top\left(\begin{matrix} \omega_1 \\ \vdots \\ \omega_{2g}\end{matrix}\right)=\\
		&= \log_\sigma + \left[(n_1,\ldots,n_{2g})\rho(h)^\top-(n_1,\ldots,n_{2g})\right] \left(\begin{matrix} \omega_1 \\ \vdots \\ \omega_{2g}\end{matrix}\right).
	\end{align*}
	Thus the corresponding cocycle is given by
	\[
    h\mapsto (n_1,\ldots,n_{2g})\rho(h)^\top-(n_1,\ldots,n_{2g})
    \]
	for $h \in G$ and a fixed pair $(n_1,\ldots,n_{2g}) \in \bZ^{2g}$, hence it is a coboundary.
	
	Viceversa, let us suppose to have a logarithm $\log_\sigma$ over $U$ and that there exists a fixed $2g$-tuple $(n_1,\ldots,n_{2g}) \in \bZ^{2g}$ such that
	\[
    h\cdot\log_\sigma = \log_\sigma + \left[(n_1,\ldots,n_{2g})\rho(h)^\top-(n_1,\ldots,n_{2g})\right] \left(\begin{matrix} \omega_1 \\ \vdots \\ \omega_{2g}\end{matrix}\right)
    \]
	for each $h \in G$. Let us define the function
	\[
    \ell:=\log_\sigma - n_1\omega_1 -\cdots- n_{2g} \omega_{2g},
    \]
	which is another determination of the logarithm of $\sigma$. Looking at the action of $G$ we obtain
	\begin{align*}
		h\cdot \ell &= h\cdot(\log_\sigma - n_1\omega_1 - \cdots - n_{2g} \omega_{2g}) =\\
		&=\log_\sigma + \left[(n_1,\ldots,n_{2g})\rho(h)^\top-(n_1,\ldots,n_{2g})\right] \left(\begin{matrix} \omega_1 \\ \vdots \\ \omega_{2g}\end{matrix}\right) - (n_1,\ldots,n_{2g})\rho(h)^\top \left(\begin{matrix} \omega_1 \\ \vdots \\ \omega_{2g}\end{matrix}\right)=\\
		&= \log_\sigma - n_1\omega_1 - \cdots - n_{2g} \omega_{2g} =\ell.
	\end{align*}
	Therefore, $\ell$ is a globally defined logarithm of $\sigma$ on $S(\bC)$.\\
\end{proof}

\section{Proof of the main results}\label{section3}

\subsection{Invariance results}\label{section3.1}

In this section we want to provide some preliminary results about the relative monodromy group $M_\sigma^\textnormal{rel}$ in order to prove that our main theorem are insensitive to some natural operations. First of all, let us notice that \Cref{mainThm1} is invariant under isogenies: this was proven in \cite[Theorem 3.6]{T1}. In what follows we describe other invariance results.

\begin{comment}
\begin{lem}\label{open}
    \Cref{mainThm1} is invariant by restriction to Zariski open subsets of $S(\bC)$: in other words, if $U \subseteq S(\bC)$ is a Zariski open subset, then \Cref{mainThm1} for $\cA|_U \rightarrow U$ implies \Cref{mainThm1} for $\cA \rightarrow S$.
\end{lem}
\begin{proof}\todo{Controlla}
    It follows from \cite[Proposition 2.10]{kol}, since surjection of fundamental groups preserves monodromy actions.\\
\end{proof}
\end{comment}

\begin{lem}\label{baseChange}
	\Cref{mainThm1} is invariant by finite base change: in other words, if $\varphi: \widetilde{S} \rightarrow S$ is a finite morphism and $\widetilde{\cA} \to \widetilde{S}$ is the pullback of $\cA\rightarrow S$ via $\varphi$, then \Cref{mainThm1}  for $\widetilde{\cA} \rightarrow \widetilde{S}$ implies \Cref{mainThm1}  for $\cA \rightarrow S$.
\end{lem}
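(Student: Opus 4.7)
The plan is to pull the section back to $\widetilde{S}$, verify that $\widetilde{\cA} \to \widetilde{S}$ together with $\widetilde{\sigma} := \varphi^* \sigma$ meets the hypotheses of \Cref{mainThm1}, and then transfer the non-triviality of $M_{\widetilde{\sigma}}^{\textrm{rel}}$ back to $M_\sigma^{\textrm{rel}}$ through a topological comparison. After replacing $\widetilde{S}$ by an irreducible component dominating $S$ one may assume $\varphi$ surjective, so that the image $\widetilde{T}$ of $\widetilde{p} = p \circ \varphi$ equals $T$ and $\widetilde{T}^\nu = T^\nu$; the map $\widetilde{p}$ is generically finite as a composition of generically finite morphisms. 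The condition ``without fixed part'' persists since $\overline{\bC(\widetilde{S})} = \overline{\bC(S)}$ and hence the $\bC$-trace of $\widetilde{A}$ coincides with that of $A$.

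The second, and crucial, step is to show that $\widetilde{\sigma}$ is ramified over $U_{\widetilde{p}}$ whenever $\sigma$ is ramified over $U_p$; after possibly shrinking one may arrange $U_{\widetilde{p}} \subseteq \varphi^{-1}(U_p)$. Because $\bC(S^{\ur})$ is unramified over $\bC(T^\nu)$ and sits inside $\bC(\widetilde{S})$, it is contained in $\bC(\widetilde{S}^{\ur})$; by the universal property of normalization one obtains a finite étale surjection $\widetilde{S}^{\ur} \to S^{\ur}$ together with the cartesian identification $\widetilde{\cA}' = \cA' \times_{S^{\ur}} \widetilde{S}^{\ur}$. Arguing by contradiction, if $\widetilde{\sigma}$ were unramified then \Cref{prop:tor_ram} would yield a finite étale subscheme $\widetilde{\mathcal B} \to \widetilde{S}^{\ur}$ of $\widetilde{\cA}'$ containing the schematic closure $\widetilde{\Sigma}'$ of the generic section; fpqc descent of finite étale subschemes along the faithfully flat cover $\widetilde{S}^{\ur} \to S^{\ur}$ would then produce a finite étale subscheme $\mathcal B \to S^{\ur}$ of $\cA'$ whose base change to $\widetilde{S}^{\ur}$ is $\widetilde{\mathcal B}$ and which contains $\Sigma'$, contradicting ramification of $\sigma$ via \Cref{prop:tor_ram}.

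Finally, the topological comparison: pick compatible base points and let $\varphi_* \colon \pi_1(\widetilde{S}(\bC), \widetilde{s}) \to \pi_1(S(\bC), s)$ have image $H$, which has finite index since $\varphi$ is finite. The period map and any abelian logarithm for $\widetilde{\cA}$ and $\widetilde{\sigma}$ are the literal pullbacks of those for $\cA$ and $\sigma$, so $\widetilde{\rho} = \rho \circ \varphi_*$ and $\widetilde{\theta}_{\widetilde{\sigma}} = \theta_\sigma \circ \varphi_*$. Consequently the cocycle $\widetilde{c}$ attached to $\widetilde{\sigma}$ is the restriction of $c$ to $H \cap \ker \rho$, yielding an inclusion $M_{\widetilde{\sigma}}^{\textrm{rel}} \subseteq M_\sigma^{\textrm{rel}}$ inside $\bZ^{2g}$. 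Applying the assumed statement of \Cref{mainThm1} to $\widetilde{\cA} \to \widetilde{S}$ with the ramified section $\widetilde{\sigma}$ gives $M_{\widetilde{\sigma}}^{\textrm{rel}} \neq 0$, whence $M_\sigma^{\textrm{rel}} \neq 0$. I expect the main obstacle to be the descent step in the second paragraph, requiring a careful identification of $\widetilde{S}^{\ur}$ as a finite étale cover of $S^{\ur}$ and a verification that the descent of $\widetilde{\mathcal B}$ is compatible with the closure of the generic image of $\sigma$.
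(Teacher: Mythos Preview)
Your third paragraph — the topological comparison — is correct and is essentially the entirety of the paper's proof. The paper simply records $\widetilde{\rho}=\rho\circ\varphi_*$ and $\theta_{\varphi^*\sigma}=\theta_\sigma\circ\varphi_*$, deduces $\varphi_*(\ker\widetilde{\rho})=\ker\rho\cap\varphi_*(\pi_1(\widetilde S))$, and concludes $M_{\varphi^*\sigma}^{\textrm{rel}}\subseteq M_\sigma^{\textrm{rel}}$. It does \emph{not} verify that the hypotheses of \Cref{mainThm1} transfer to $\widetilde{\cA}\to\widetilde S$; indeed the paper's argument speaks only of a ``non-torsion'' section rather than a ramified one. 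This is because, in the intended application (passage to a finite étale cover $S'\to S$ to acquire principal polarization and level structure, following \cite{DGH}), those verifications are either immediate or irrelevant. So your first paragraph is extra work the authors do not perform, and your second paragraph addresses a point the paper simply leaves aside.

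That second paragraph, however, contains a genuine gap. Descent of closed (or finite étale) subschemes along a faithfully flat cover $\widetilde S^{\ur}\to S^{\ur}$ is not automatic: one needs descent data, i.e.\ a canonical identification of the two pullbacks of $\widetilde{\mathcal B}$ to $\widetilde S^{\ur}\times_{S^{\ur}}\widetilde S^{\ur}$ satisfying the cocycle condition. An arbitrary finite étale subscheme $\widetilde{\mathcal B}\subset\widetilde{\cA}'$ containing $\widetilde\Sigma'$ has no reason to carry such data, so the claimed production of $\mathcal B\subset\cA'$ is unjustified as written. If you want to salvage this step, a more promising route is to work at the generic point: the original $\sigma$ gives a $\bC(S)$-point of $A'$, which under your hypothesis would also lie in $A'(\bC(\widetilde S^{\ur}))$, hence in $A'\bigl(\bC(S)\cap\bC(\widetilde S^{\ur})\bigr)$; the task then reduces to checking the field-theoretic identity $\bC(S)\cap\bC(\widetilde S^{\ur})=\bC(S^{\ur})$ inside $\bC(\widetilde S)$, which is a statement about maximal unramified subextensions rather than about descent of subschemes.
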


\begin{proof}
	Let us denote by $\omega_1, \ldots, \omega_{2g}$ a basis of periods of $\cA \rightarrow S$. We can define $\widetilde{\omega_1}, \ldots, \widetilde{\omega_{2g}}$, a basis of periods of $\widetilde{\cA} \rightarrow \widetilde{S}$, by the equations
	\begin{equation}\label{eqn3}
		\widetilde{\omega_1}=\omega_1 \circ \varphi,\; \ldots,\; \widetilde{\omega_{2g}}=\omega_{2g} \circ \varphi.
	\end{equation}
	Fix two base points $\widetilde{s} \in \widetilde{S}(\bC)$ and $s \in S(\bC)$ such that $\varphi(\widetilde{s})=s$. We have the associated monodromy representations:
	\[
    \rho:\pi_1(S(\bC),s) \rightarrow \textrm{Mon}(\cA), \qquad \widetilde{\rho}:\pi_1(\widetilde{S}(\bC),\widetilde{s}) \rightarrow \textrm{Mon}(\widetilde{A}).
    \]
	By \Cref{eqn3}, we have $\widetilde{\rho}(h)=\rho(\varphi_*(h))$ for each $h \in \pi_1(\widetilde{S}(\bC),\widetilde{s})$ where $\varphi_*:\pi_1(\widetilde{S}(\bC),\widetilde{s})\rightarrow \pi_1(S(\bC),s)$ denotes the induced homomorphism between fundamental groups. In particular, we obtain
	\begin{equation}\label{eqn3_bis}
		\varphi_*(\ker{\widetilde{\rho}})=\ker{\rho} \cap \varphi_*(\pi_1(\widetilde{S}(\bC),\widetilde{s})).
	\end{equation}
	Let $\sigma:S \rightarrow \cA$ be a non-torsion section of $\cA \rightarrow S$. Since $\widetilde{\cA}\rightarrow \widetilde{S}$ is obtained as pullback of $\cA \rightarrow S$, then the abelian varieties $\widetilde{\cA}_{\widetilde{s}}$ and $\cA_s$ are canonically identified; therefore the pullback $\varphi^*\sigma:\widetilde{S} \rightarrow \widetilde{\cA}$ is a non-torsion section of $\widetilde{\cA} \rightarrow \widetilde{S}$. We have the associated monodromy representations
	\[
    \theta_\sigma:\pi_1(S(\bC),s) \rightarrow \textrm{SL}_{2g+1}(\bZ), \qquad \theta_{\varphi^*\sigma}:\pi_1(\widetilde{S}(\bC),\widetilde{s}) \rightarrow \textrm{SL}_{2g+1}(\bZ).
    \]
		
	Using again the fact that the abelian varieties $\widetilde{\cA}_{\widetilde{s}}$ and $\cA_s$ are canonically identified, we obtain that a determination of the logarithm of $\varphi^*\sigma$ at $\widetilde{s}$ can be defined by the equation
	\begin{equation}\label{eqn4}
		\log_{\varphi^*\sigma}(\widetilde{s}):=\log_\sigma(s).
	\end{equation}
	Hence, we have $\theta_{\varphi^*\sigma}(h)=\theta_\sigma(\varphi_*(h))$ for each $h \in \pi_1(\widetilde{S}(\bC),\widetilde{s})$. Now, let us consider the relative monodromy group
	\[
	M_{\varphi^*\sigma}^{\textnormal{rel}}=\theta_{\varphi^*\sigma}(\ker{\widetilde{\rho}}).
    \]
    By \Cref{eqn3_bis} and \Cref{eqn4}, we obtain
	\[
	M_{\varphi^*\sigma}^{\textnormal{rel}} = \theta_\sigma(\varphi_*(\ker{\widetilde{\rho}})) = \theta_{\sigma}(\ker{\rho} \cap \varphi_*(\pi_1(\widetilde{S}(\bC),\widetilde{s}))).
    \]
    Since $\ker{\rho} \cap \varphi_*(\pi_1(\widetilde{S}(\bC),\widetilde{s})) \subset \ker{\rho}$, we get $M_{\varphi^*\sigma}^{\textnormal{rel}} \subseteq M_{\sigma}^{\textnormal{rel}}$. The conclusion follows.\\
\end{proof}

Let us consider a non-torsion section $\sigma:S \rightarrow \cA$. Note that the representation $\theta_\sigma$ is defined in terms of \Cref{actionLog}, thus it depends on the branch of logarithm we fix. Here, we want to prove that $M_\sigma^\textnormal{rel}$ is independent of the choice of branch of $\log_\sigma$ and remains unchanged under some operations on the section.

\begin{prop}\label{relMonGrp}
	Let $\sigma: S \rightarrow \cA$ be a non-torsion section. Then
	\begin{itemize}
		\item[(i)] the group $M_\sigma^\textnormal{rel}$ does not depend on the choice of branch of $\log_\sigma$;
		
		\item[(ii)] the groups $M_\sigma^\textnormal{rel}$ and $M_{n\sigma}^\textnormal{rel}$ are isomorphic.
	\end{itemize}
\end{prop}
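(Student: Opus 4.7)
The plan is to exploit two elementary observations: (a) any two branches of $\log_\sigma$ over a connected open set differ by a fixed element of the period lattice $\Lambda_s$, and (b) the fiberwise exponential is a group homomorphism, so if $\log_\sigma$ is a branch of the logarithm of $\sigma$, then $n\log_\sigma$ is automatically a branch of the logarithm of $n\sigma$. The crucial structural input is the definition of $\ker\rho$ as the subgroup of $\pi_1(S(\bC),s)$ that acts trivially on every period function $\omega_i$, which makes the computation on $\ker\rho$ much cleaner than on the full $\pi_1$.

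For part (i), I would take two branches $\log_\sigma$ and $\log_\sigma'$ defined on a common simply connected neighborhood of the base point $s$. Their difference is a locally constant section of the period sheaf $\Lambda$, hence has the form $\omega = m_1\omega_1 + \cdots + m_{2g}\omega_{2g}$ for fixed integers $m_i$. For any $h \in \ker\rho$, the action of $h$ fixes each $\omega_i$ (this is exactly the definition of $\ker\rho$) and therefore fixes $\omega$. Applying $h$ to the identity $\log_\sigma' = \log_\sigma + \omega$ and subtracting yields $h\cdot \log_\sigma' - \log_\sigma' = h\cdot \log_\sigma - \log_\sigma$, so the cocycle values on $\ker\rho$ coincide for the two branches. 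Hence $M_\sigma^{\textnormal{rel}} = \theta_\sigma(\ker\rho)$ depends only on $\sigma$.

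For part (ii), I would fix a branch $\log_\sigma$ and observe that $n\log_\sigma$ is a valid branch of $\log_{n\sigma}$, since the fiberwise exponential is a group homomorphism. For $h \in \ker\rho$, the $\bZ$-linearity of the monodromy action combined with the triviality of $h$ on the $\omega_i$ gives
\[
h\cdot(n\log_\sigma) = n(h\cdot\log_\sigma) = n\log_\sigma + n\,c_\sigma(h)^\top \begin{pmatrix}\omega_1\\ \vdots \\ \omega_{2g}\end{pmatrix},
\]
so $c_{n\sigma}(h) = n\cdot c_\sigma(h)$ as vectors in $\bZ^{2g}$. Consequently $M_{n\sigma}^{\textnormal{rel}} = n\cdot M_\sigma^{\textnormal{rel}}$. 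Since $M_\sigma^{\textnormal{rel}} \subseteq \bZ^{2g}$ is torsion-free and $n$ is a nonzero integer, multiplication by $n$ is an injection onto its image, hence realizes an isomorphism $M_\sigma^{\textnormal{rel}} \xrightarrow{\sim} M_{n\sigma}^{\textnormal{rel}}$ of abstract abelian groups. Part (i) guarantees that this conclusion is insensitive to any other choice of branch for $\log_{n\sigma}$.

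There is no substantial obstacle: once one recognizes that $\ker\rho$ acts trivially on periods, both statements reduce to direct algebraic manipulations with the cocycle formula \Cref{actionLog}. The only minor subtlety worth pointing out is the coherence between the two independent branch choices (for $\log_\sigma$ and for $\log_{n\sigma}$) appearing in part (ii), which is exactly what part (i) allows us to disregard; this motivates the order (i) before (ii).
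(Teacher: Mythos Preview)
Your proposal is correct and follows essentially the same approach as the paper: for (i) two branches differ by a period, which is fixed by $\ker\rho$, so the cocycle values on $\ker\rho$ agree; for (ii) one takes $n\log_\sigma$ as a branch of $\log_{n\sigma}$ and observes the variations scale by $n$, giving $M_{n\sigma}^{\textnormal{rel}} = n\cdot M_\sigma^{\textnormal{rel}}$. Your write-up is in fact slightly more explicit than the paper's (you spell out the isomorphism as multiplication by $n$ on a torsion-free group and note why part (i) is needed to make (ii) branch-independent), but the argument is the same.
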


\begin{proof}
	$(i)$: Choose two branches of $\log_\sigma$, say $\ell_\sigma^1$ and $\ell_\sigma^2$ and denote by $M_{\sigma,1}^\textnormal{rel}$ and $M_{\sigma,2}^\textnormal{rel}$ the corresponding relative monodromy groups. Since the two branches $\ell_\sigma^1$ and $\ell_\sigma^2$ have to differ by a period, the conclusion follows: in fact, since any loop $\alpha$ in $S(\bC)$ whose homotopy class lies in $\ker{\rho}$ leaves periods unchanged, then $\ell_\sigma^1$ and $\ell_\sigma^2$ have the same variation by analytic continuation along $\alpha$; thus we get $M_{\sigma,1}^\textnormal{rel}=M_{\sigma,2}^\textnormal{rel}$.
	
	$(ii)$: Fixed a branch $\ell_\sigma$ of $\log_\sigma$, a determination $\ell_{n\sigma}$ of the logarithm of $n\sigma$ can be defined by the equation $ \ell_{n\sigma}=n\ell_\sigma$. For any loop $\alpha$ representing a homotopy class $h \in \pi_1(S(\bC),s)$ we have the corresponding variations:
	\[
	h\cdot \ell_{n\sigma}=\ell_{n\sigma}+\omega_\alpha^{n\sigma}, \qquad  h\cdot \ell_\sigma = \ell_\sigma+\omega_\alpha^\sigma,
	\]
	where $\omega_\alpha^{n\sigma}=n\omega_\alpha^\sigma$. Thus $M_\sigma^\textnormal{rel}$ and $M_{n\sigma}^\textnormal{rel}$ are isomorphic.\\
\end{proof}

\subsection{Non triviality of the relative monodromy}\label{section:3.2}

In this section we prove \Cref{mainThm1}, i.e. that $M_{\sigma}^{\textnormal{rel}}$ is non-trivial for all sections which ramify outside a fixed proper closed subset of the base. Recall that by  \Cref{baseChange}, we can assume that the abelian scheme $\phi:\cA \to S$ is endowed with a dominant morphism $p^\nu:S \to T^\nu$ coming from the normalization of $T$.

By the hypothesis of the theorem, the modular map $p \colon S \to T$ is assumed to be generically finite. Using \cite[Lemma 10.5]{Hart}, this implies that the locus $U_p \subseteq S$ where $p$ is finite and flat forms a dense Zariski-open subset. The non-triviality of $M_{\sigma}^{\textnormal{rel}}$ is preserved under restriction to dense Zariski-open subsets of the base, as the induced homomorphism between fundamental groups is surjective by \cite[Proposition 2.10]{kol}.  Therefore, after possibly replacing $S$ with $U_p$, we may assume that the morphism $p = \nu \circ p^\nu \colon S \to T$ is finite and flat. Observe that $p^\nu$ is quasi-finite by construction. Moreover, since $\nu$ is separated, the morphism $p^\nu$ is proper and consequently finite and surjective.
\begin{rem}\label{rem:case_ell}
    The restriction to $U_p$ is necessary because our argument relies on the ``trace operator'' for sections, as defined in \Cref{the_trace}. The key properties of this operator, particularly the one established in \Cref{propTrPu}, are valid when $p$ is flat and finite.
\end{rem}

\begin{comment}
\begin{rem}\label{extRam}
Notice that the notion of ramified section is not affected by the restriction of the base. In fact the ramification property must be checked after extending the section to a maximal base. One uses the fact that for any open dense subset $U\subseteq S$ that induces an abelian scheme $\cA|_U\to U$ we have $\Sigma(S)\cong\Sigma(U)$.
\end{rem}
\end{comment}

Given a section $\sigma$ of $\cA/S$, the proof proceeds by assuming that the group $M_{\sigma}^{\textnormal{rel}}$ is trivial and showing that $\sigma$ must then be unramified. The key idea involves constructing an ``unramified family'', i.e. an unramified base change of the universal family, associated with the abelian scheme, which ensures that the cohomology class arising from the logarithm of $\sigma$ ``descends''. Applying the Lefschetz hyperplane theorem then imposes strong restrictions on the logarithm of $\sigma$. Combining this with \Cref{prop:unramifiedMiddle} and \Cref{cor:divisible_ram}, we conclude that $\sigma$ is indeed unramified.

In order to construct the aforementioned unramified family, we consider the  map $p^\nu:S \to T^\nu$ and we look at the induced homomorphism on fundamental groups $p^\nu_*:\pi_1(S(\bC),s) \to \pi_1(T^\nu(\bC),x)$ where $s \in S(\bC)$ and $x \in T^\nu(\bC)$ are two base points such that $x=p^\nu(s)$. By \cite[Corollary 12.19]{Lee2} the inclusion $p^\nu_*\pi_1(S(\bC),s) \subseteq \pi_1(T^\nu(\bC),x)$ gives rise to a finite topological cover $q': S'(\bC) \to T^\nu(\bC)$ that  satisfies
\begin{equation}\label{liftProp}
q'_*\pi_1(S'(\bC),s')=p^\nu_*\pi_1(S(\bC),s),
\end{equation}
where $s'$ is a fixed base point with $q'(s')=x$. By the Riemann existence theorem  (see \cite[Exposé XII, Théorème~5.1]{SGA1}) at the level of schemes $q':S'\to T^\nu$ is a finite \'etale cover. By the lifting theorem for \'etale fundamental groups and the relation with the topological fundamental group we can lift $p^\nu$ to a morphism $q:S(\bC)\rightarrow S'(\bC)$ such that $q(s)=s'$. Considering the corresponding abelian schemes we obtain the diagram
\begin{equation}\label{factorScheme}
	\begin{tikzcd}
		\cA(\bC) \arrow{d} \arrow{r} & \cA'(\bC) \arrow{d} \arrow{r} & \fA_{g,T^\nu}(\bC) \arrow{d}\\
		S(\bC) \arrow{r}{q} \arrow[bend right, swap]{rr}{p^\nu}& S'(\bC) \arrow{r}{q'} & T^\nu(\bC).
	\end{tikzcd}
\end{equation}
Notice that the abelian scheme $\cA' \to S'$ is an unramified family by construction, since $q':S'(\bC) \to T^\nu(\bC) \subseteq \bA_g(\bC)$ is in particular an unramified morphism.

\begin{rem}\label{algTopPropConstr}
    We want to point out some algebraic and topological properties of the above construction. By \cite[Exposé I, Corollaire~10.2]{SGA1} the finite étale morphism $q'$ is obtained as normalization of $T^\nu$ in the field extension $\bC(S')$ of $\bC(T^\nu)$. By universal property of normalization in a field extension, the morphism $q$ is dominant. Since $p$ and $q'$ are both finite morphisms then $q$ is quasi-finite and proper, hence a finite surjective morphism as well. Additionally, since $S'$ is connected and $T^\nu$ is integral and normal it follows that $S'$ is normal and irreducible (see \cite[Tag 0BQL]{stacks-project}).

    From the topological point of view, observe that the induced homomorphism $q_*:\pi_1(S(\bC),s) \to \pi_1(S'(\bC),s')$ is surjective. This follows from the diagram
    \begin{equation}
    \begin{tikzcd}
	   \pi_1(S(\bC),s) \arrow{r}{q_*} \arrow[bend right, swap]{rr}{p^\nu_*}& \pi_1(S'(\bC),s') \arrow{r}{q'_*} & p^\nu_*\pi_1(S(\bC),s),
    \end{tikzcd}
    \end{equation}
    where $q'_*:\pi_1(S'(\bC),s') \to p^\nu_*\pi_1(S(\bC),s)$ is an isomorphism.
\end{rem}

Let's fix some notations. We denote by $\exp$ and $\exp'$ the abelian exponential maps of the schemes $\cA \to S$ and $\cA' \to S'$, respectively. Moreover, recall that $\Sigma^{\an}$ is the sheaf of holomorphic sections of the scheme $\cA \to S$, while $\Sigma(S)$ denotes its algebraic sections. Analogously, we denote by $\Sigma'^{,\an}$ and $\Sigma'(S')$ the sheaf of holomorphic sections and the algebraic sections of the abelian scheme $\cA' \to S'$, respectively. Now, we want to define a way of mapping sections of $\cA \to S$ to sections of $\cA'\to S'$, and viceversa.

First of all we define the the pull-back operator: in fact notice that for any $\tau' \in \Sigma'^{,\an}(S')$ the composition $\tau' \circ q$ defines a holomorphic map $S(\bC) \to \cA'(\bC)$; after canonically identifying the fibers of the type $\cA_y$ and $\cA'_{q(y)}$, we obtain an element $q^*(\tau') \in \Sigma^{\an}(S)$. In other words, we obtain a morphism of abelian groups
\[
q^*: \Sigma'^{,\an}(S') \to \Sigma^{\an}(S).
\]
In addition if $\tau'$ is algebraic then $q^*(\tau')$ is still algebraic, i.e. $q^*(\Sigma'(S'))\subseteq \Sigma(S)$.

In the opposite direction, since $q$ is a finite flat morphism the notion of degree is well-behaved with respect to the ramification index (see \cite[Proposition 12.21]{GorWed}). Thus, we can define the trace operator
\[
\Tr: \Sigma^{\an}(S) \to \Sigma'^{,\an}(S')
\]
in the following way: we already noticed that for each $y \in S(\bC)$ the fibers $\cA_y$ and $\cA'_{q(y)}$ are canonically identified, so for $\tau \in \Sigma^{\an}(S)$ we define $\Tr(\tau):S'(\bC) \to \cA'(\bC)$ by setting
\begin{equation}\label{the_trace}
\Tr(\tau)(y'):=\sum_{y \in q^{-1}(y')} m_y\tau(y) \in \cA'_{y'},
\end{equation}
where $m_y$ is the ramification index of $q$ at $y$. We have $\Tr(\Sigma(S)) \subseteq \Sigma'(S')$. Moreover, we point out that the following property holds for each holomorphic section $\tau':S'(\bC) \rightarrow \cA'(\bC)$:
\begin{equation}\label{propTrPu}
    \Tr \circ \, q^*(\tau') = \deg{q}\cdot \tau'.
\end{equation}

It is clear that the trace operator and the pullback operator can be defined also for the sections of the vector bundles $\left(\cO^{\an}_S\right)^{\oplus g}$ and $\left(\cO^{\an}_{S'}\right)^{\oplus g}$, i.e. for vectors of holomorphic functions on $S$ and $S'$. They commute with the exponentials, in the sense that for all holomorphic functions $f:S(\bC) \rightarrow \bC^g$ and $f':S'(\bC) \rightarrow \bC^g$ we get
\begin{equation}\label{propTrExp}
    \Tr(\exp(f)) = \exp'(\Tr(f)) \qquad \textrm{and} \qquad \exp(q^*(f')) = q^*(\exp'(f')).
\end{equation}

Now we want to consider some cohomology groups related to the monodromy of logarithms. In our situation the bases of abelian schemes are too large to be able of having control on their cohomology. If we consider an algebraic curve $C \subset S(\mathbb C)$, we can think of restricting the diagram of \Cref{factorScheme} to $C$: more precisely, if we denote $C':=q(C)$ and $D:=q'(C')$ we get a new diagram:
\begin{equation}\label{factorSchemeRestr}
\begin{tikzcd}
	\cA_{|_C}(\bC) \arrow{d} \arrow{r} & \cA'_{|_{C'}}(\bC) \arrow{d} \arrow{r} & \fA_{g|_D}(\bC) \arrow{d}\\
	C \arrow{r}{q_C} \arrow[bend right, swap]{rr}{p_C}& C' \arrow{r}{q'_{C'}} & D,
\end{tikzcd}
\end{equation}
where $q_C, q'_{C'}$ and $p_C$ denote the restrictions of the corresponding maps. In this situation, if $C'$ and $D$ are still of dimension $1$, we can clearly consider the analogous notions of trace operator and pullback operator as well as the restriction of a section $\sigma:S \to \cA$ to $C$, which will be denoted by $\sigma_C$. We also have analogous notions of monodromy representations and monodromy groups, e.g. we will write $\rho_C, \theta_{\sigma_C}, \textrm{Mon}(\cA|_C)$ and $M^\textrm{rel}_{\sigma_C}$ for the obvious objects associated to the scheme $\cA|_C \to C$ and to the section $\sigma_C:C\to \cA|_C$; we do the same for $C'$. Moreover, if we denote by $i: C \hookrightarrow S(\bC)$ and $i': C' \hookrightarrow S'(\bC)$ the natural embeddings and choose base points $s \in C$ and $s' \in C'$, from a topological point of view we obtain the following diagram involving the induced homomorphisms between fundamental groups (for the surjectivity of the upper horizontal arrow see \Cref{algTopPropConstr}):
\begin{equation}\label{topDiagram}
\begin{tikzcd}
	\pi_1(S(\bC),s) \arrow[two heads]{r}{q_*} & \pi_1(S'(\bC),s')\\
	\pi_1(C,s) \arrow{r}{{q_{C}}_\ast} \arrow{u}{i_*} & \pi_1(C',s') \arrow[swap]{u}{i'_*}.
\end{tikzcd}
\end{equation}
Anyway, this kind of restriction causes the loss of much information from the starting schemes. With the next result we want to overcome the problem of having large bases, by showing that we can restrict the abelian schemes to some suitable curves lying into the bases so that the topological and monodromy properties are preserved.

\begin{lem}\label{LefCurve}
    There exists a curve $C \subseteq S(\bC)$ with the following properties:
    \begin{enumerate}
        \item[$(i)$] the morphism $q_C:C \to C'$ appearing in \Cref{factorSchemeRestr} is finite, the curves $C$ and $C'$ are affine and the abelian schemes $\cA_{|_C} \to C$, $\cA'_{|_{C'}} \to C'$ are without fixed part; %and the section $\sigma_C:C \to \cA|_C$ is non-torsion and non-isoconstant;

        \item[$(ii)$] $\Tr(\sigma_C)=\Tr(\sigma)|_{C'}$ for any $\sigma \in \Sigma^\textnormal{an}(S)$ and $\textnormal{Tr}\circ q_C^*(\tau) = \deg{q} \cdot \tau$ for any $\tau \in \Sigma^\textnormal{an}(C')$;

        \item[$(iii)$] the homomorphisms $i_*$ and $i'_*$ appearing in \Cref{topDiagram} are surjective;

        \item[$(iv)$] if we denote by $\rho'$ the monodromy representation of the abelian scheme $\cA'\to S'$, then we have $\rho_C=\rho\circ i_*, \rho_{C'}=\rho'\circ i'_*$ and $\textrm{Mon}(\cA)=\textrm{Mon}(\cA|_C), \textrm{Mon}(\cA')=\textrm{Mon}(\cA'|_{C'})$;

        \item[$(v)$] $\ker{{q_{C}}_\ast}\subseteq \ker{\rho_C}$;

        \item[$(vi)$] $\theta_{\sigma_C}=\theta_\sigma \circ i_*$ and $\theta_\sigma(\pi_1(S(\bC),s)) = \theta_{\sigma_C}(\pi_1(C,s))$, for any $\sigma \in \Sigma^\textnormal{an}(S)$;

        \item[$(vii)$] if $M_\sigma^\textrm{rel}=\{0\}$, then $M_{\sigma_C}^\textrm{rel}=\{0\}$, for any $\sigma \in \Sigma^\textnormal{an}(S)$.
    \end{enumerate}
\end{lem}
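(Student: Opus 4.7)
The plan is to construct $C \subseteq S(\bC)$ as an irreducible affine curve obtained by iterated generic hyperplane sections, and to define $C' := q(C)$, arranging matters so that the set-theoretic equality $q^{-1}(C') = C$ holds. Concretely, I would fix compatible projective embeddings of (closures of) $S$ and $S'$ and apply the Goresky--MacPherson Lefschetz hyperplane theorem for quasi-projective varieties \cite[Part II, 1.2, Theorem]{GorMac} successively: after $\dim S - 1$ generic hyperplane cuts one obtains an irreducible affine curve $C \subseteq S(\bC)$ with $i_* \colon \pi_1(C,s) \twoheadrightarrow \pi_1(S(\bC),s)$. By Bertini's theorem, for sufficiently general choices $C' = q(C)$ is also an irreducible affine curve, $q^{-1}(C') = C$, and the Lefschetz theorem applied on the $S'$-side (to the pushed-forward hyperplanes) yields $i'_*$ surjective as well. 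Finiteness of $q_C = q|_C$ is inherited from $q$, which together with $q_* \circ i_* = i'_* \circ q_{C*}$ and the surjectivity of $q_*$ recalled in \Cref{algTopPropConstr} establishes the first half of $(i)$ and the whole of $(iii)$.

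For the ``without fixed part'' claim in $(i)$ I would invoke the standard correspondence between the $\bC$-trace of the generic fiber of an abelian scheme and the monodromy-invariant sub-Hodge structure of its rational $H_1$: since $i_*$ is surjective we have $\mathrm{Mon}(\cA|_C) = \mathrm{Mon}(\cA)$, so any fixed part of $\cA|_C$ would already be a fixed part of $\cA$, and likewise for $\cA'|_{C'}$ via $i'_*$. The trace identities in $(ii)$ are then immediate: the equality $q_C^{-1}(y') = q^{-1}(y')$ (with matching ramification indices) for $y' \in C'$, applied to \Cref{the_trace}, gives $\Tr(\sigma)|_{C'} = \Tr(\sigma_C)$, while $\Tr \circ q_C^* = \deg(q_C) \cdot \mathrm{id} = \deg(q) \cdot \mathrm{id}$ follows from \Cref{propTrPu}.

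Items $(iv)$--$(vi)$ are then pure functoriality: restricting period functions along a loop in $C$ computes the same analytic continuation as along its image in $S$, so $\rho_C = \rho \circ i_*$ and $\rho_{C'} = \rho' \circ i'_*$, and the monodromy-group equalities follow from surjectivity of $i_*, i'_*$. Because $\cA|_C$ is the $q_C$-pullback of $\cA'|_{C'}$, one also has $\rho_C = \rho_{C'} \circ q_{C*}$, giving $(v)$. The simultaneous period--logarithm monodromy satisfies $\theta_{\sigma_C} = \theta_\sigma \circ i_*$ by the same principle, yielding $(vi)$. Finally, for $(vii)$, if $\gamma \in \ker \rho_C$ then $\rho(i_*\gamma) = \rho_C(\gamma) = e$, so $i_*(\ker \rho_C) \subseteq \ker \rho$ and $M_{\sigma_C}^{\textrm{rel}} = \theta_{\sigma_C}(\ker \rho_C) = \theta_\sigma(i_*(\ker \rho_C)) \subseteq \theta_\sigma(\ker \rho) = M_\sigma^{\textrm{rel}} = \{0\}$.

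The main technical difficulty will be the simultaneous Lefschetz--Bertini construction: one must arrange that the same family of generic hyperplane sections yields $\pi_1$-surjectivity on both $C \hookrightarrow S(\bC)$ and $C' \hookrightarrow S'(\bC)$ while preserving $q^{-1}(C') = C$ and the irreducibility of $C$. The quasi-projective (rather than projective) nature of the bases and the possible non-normality of $T$ require care in applying the Goresky--MacPherson version of Lefschetz, as does the passage between the algebraic-scheme and analytic-space viewpoints.
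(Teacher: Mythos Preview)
Your arguments for parts $(ii)$--$(vii)$ are essentially correct and match the paper's reasoning. The gap is in the construction of $C$ itself.

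You propose to take $C$ as an \emph{irreducible} iterated hyperplane section of $S$ and then hope that, for generic choices, $q^{-1}(q(C)) = C$. This is generally impossible when $\deg q > 1$: a generic linear slice $C_1 \subseteq S$ will not be saturated under $q$, i.e.\ $q^{-1}(q(C_1))$ will contain other components mapping onto $q(C_1)$. Pulling hyperplanes back from $S'$ does not help either, since then the preimage is a (generally reducible) degree-$d$ hypersurface section of $S$, not a hyperplane section, and Lefschetz on $S$ no longer applies directly. So the ``simultaneous Lefschetz--Bertini construction'' you flag as the main difficulty is not just delicate; as stated it cannot be carried out.

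The paper's resolution is to drop the irreducibility requirement on $C$. One first applies Lefschetz on $S$ to obtain an irreducible affine curve $C_1 \subseteq S$ with $\pi_1(C_1,s) \twoheadrightarrow \pi_1(S(\bC),s)$, sets $C' := q(C_1)$, and then \emph{defines} $C := q^{-1}(C')$. This forces $q^{-1}(C') = C$ (so $(ii)$ is automatic), and $q_C$ is finite because it is the restriction of the finite map $q$ to a closed subscheme. The curve $C$ may now be reducible, but $C_1$ is one of its components, and since $C_1 \hookrightarrow S$ factors through $C$, the surjection $\pi_1(C_1,s) \twoheadrightarrow \pi_1(S(\bC),s)$ factors through $\pi_1(C,s)$, giving $i_*$ surjective. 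Surjectivity of $i'_*$ then follows from the commutative square $i'_* \circ q_{C*} = q_* \circ i_*$ together with the surjectivity of $q_*$; no separate Lefschetz argument on $S'$ is needed. With this modified construction, your proofs of $(iv)$--$(vii)$ go through verbatim, and your monodromy-invariant argument for ``without fixed part'' is a valid (and more explicit) substitute for the paper's appeal to genericity.
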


\begin{proof}
    \begin{enumerate}
        \item[$(i)$] In order to construct the curve $C$ let us consider the general hyperplane section $Y=H \cap S(\bC)$ and choose a base point $s \in Y$. By Lefschetz theorem on quasi-projective varieties \cite[Part II, 1.2, Theorem]{GorMac}, the natural map $\pi_1(Y,s) \to \pi_1(S(\bC),s)$ between fundamental groups is surjective. After iterating the process of taking such generic hyperplane sections we obtain an irreducible curve $C_1$ with the property that the natural map $\pi_1(C_1,s) \to \pi_1(S(\bC),s)$ is surjective for a base point $s \in C_1$. Note that $C_1$ can be assumed to be affine up to remove a finite number of points and using \cite[Proposition 2.10]{kol}. The restriction of $q$ to $C_1$ defines a morphism $q_{C_1}$ which is finite: in fact, since every closed embedding is a finite morphism and $q:S \to S'$ is a finite morphism (see \Cref{algTopPropConstr}), then the restriction $q_{C_1}$ is finite. We define $C':=q_{C_1}(C_1)$ and $C:=q^{-1}(C')$. Notice that $C'$ is an affine curve since $q_{C_1}$ is finite and surjective; moreover, $C$ could be no longer irreducible but it is still affine because $q$ is an affine morphism (since it is finite). Furthermore, since $q:S \to S'$ is finite then it is closed. We have a closed embedding $i:C \hookrightarrow S(\bC)$, and we can conclude that the restriction $q_C:C \to C'$ is a finite morphism. The abelian scheme $\cA_{|_C} \to C$ is without fixed part by genericity of the hyperplane sections; this implies that also $\cA'_{|_{C'}} \to C'$ is without fixed part.
        
        %Up to remove a finite number of points from $C'$ with the corresponding preimages, we can suppose that both $C$ and $C'$ are affine curves. In addition, by the genericity of the hyperplane sections we can assume that the section $\sigma_C:C \to \cA|_C$ is non-torsion and non-isoconstant.       

%Per ogni punto fuori dai valori di torsione, possiamo considerare il luogo degli iperpiani passani per il punto. Tale luogo $L$ è un chiuso irriducibile di dimensione $n-1$ (dove $n$ è la dimensione del proiettivo dove è immersa $S$). Il luogo cattivo di Lefschetz è un chiuso di Zariski, perciò ha un numero finito di componenti irriducibili. Nel peggiore dei casi $L$ è una componente irriducibile del luogo di Lefschetz. I valori di torsione sono unione numerabile di intersezioni di zeri di funzioni olomorfe (quindi di discreti), perciò danno un insieme numerabile. Dunque esistono infiniti valori di non-torsione. Ciò significa che esiste un iperpiano passante per un valore di non-torsione di $\sigma$ che sta fuori dal luogo cattivo di Lefschetz.

        \item[$(ii)$] This properties are true by construction, since $C=q^{-1}(C')$.

        \item[$(iii)$] By $(i)$ we have a surjective homomorphism $\pi_1(C_1,s) \to \pi_1(S(\bC),s)$. Recall that $C_1$ is an irreducible component of $C$. Since the inclusion map $C_1 \hookrightarrow S(\bC)$ factors through $C$, then the homomorphism $\pi_1(C_1,s) \to \pi_1(S(\bC),s)$ factors through $\pi_1(C,s)$. Thus, $i_*:\pi_1(C,s) \to \pi_1(S(\bC),s)$ is surjective. Then, the surjectivity of $i'_*:\pi_1(C',s') \to \pi_1(S'(\bC),s')$ follows by \Cref{topDiagram}.

        \item[$(iv)$] Just recall that for abelian schemes obtained as base change we define periods as in \Cref{periodFunctions}, so that we have $\rho_C=\rho\circ i_*$ and $\rho_{C'}=\rho'\circ i'_*$. Therefore, the claim follows from $(iii)$.

        \item[$(v)$] Let $h \in \ker{q_{C*}}$. Since we can define periods of $\cA|_C \to C$ as in \Cref{periodFunctions} and $q_{C*}(h)$ has a trivial monodromy action on the periods of $\cA'|_{C'} \to C'$, then we get $h \in \ker{\rho_C}$; hence the claim.

        \item[$(vi)$] Since we have the embedding $i:C \hookrightarrow S(\bC)$, we clearly have $\theta_{\sigma_C}(\pi_1(C),s) \subseteq \theta_\sigma(\pi_1(S(\bC),s))$. In the other direction, by $(iv)$ it is enough to show that each variation of a logarithm of $\sigma$ induced by an element of $\pi_1(S(\bC),s)$ can be obtained as a variation of a logarithm of $\sigma_C$ induced by the monodromy action of some element of $\pi_1(C,s)$; and this is true thanks to the surjectivity of $i_*$ which was proven in part $(iii)$. This proves the claim.

        \item[$(vii)$] Let us assume $M_\sigma^\textrm{rel}=\{0\}$ and consider $h \in \ker{\rho_C}$. By part $(iv)$ we have $\rho_C=\rho\circ i_*$, and this implies that $i_*(h) \in \ker{\rho}$. The assumption $M_\sigma^\textrm{rel}=\{0\}$ implies that $\theta_\sigma(i_*(h))=0$. By part $(vi)$ we have $\theta_{\sigma_C}=\theta_\sigma \circ i_*$ which implies $\theta_{\sigma_C}(h)=\theta_\sigma(i_*(h))=0$, i.e. $M_{\sigma_C}^\textrm{rel}=\{0\}$.
    \end{enumerate}
\end{proof}

Now, let us consider an algebraic curve $C \subset S(\mathbb C)$ satisfying \Cref{LefCurve}. We want to consider the restriction of the exact sequence in \Cref{exSeqSch} to the curve $C$. Since $C$ is a Stein space, by \cite[Theorem 5.3.1]{For} the restriction $\cLie(\cA)|_C$ is isomorphic to $\left(\mathcal O^{\an}_C\right)^{\oplus g}$, where $\mathcal O^{\an}_C$ denotes the sheaf of holomorphic functions on $C$. Therefore, we obtain the exact sequence
\begin{equation}\label{exactSeqSheaves}
    0 \rightarrow \Lambda_C \rightarrow \left(\mathcal O^{\an}_C\right)^{\oplus g} \rightarrow \Sigma^{\an}|_C \rightarrow 0.
\end{equation}

Let us study the exact sequence in cohomology groups induced by \Cref{exactSeqSheaves}. By \Cref{wellDefPeriods} we can conclude that no non-zero period can be globally defined on $C$. In other words, we get $H^0(C,\Lambda_C)=0$. In addition, since $C$ is a Stein space we have
\[
H^1(C,\left(\mathcal O^{\an}_C)^{\oplus g}\right)=\bigoplus_{i=1}^g H^1(C,\left(\mathcal O^{\an}_C)\right) = 0.
\]
Thus, we obtain the exact sequence of cohomology groups
\begin{equation}\label{exactSeqCoh}
	0 \rightarrow H^0(C,(\mathcal O^{\an}_C)^{\oplus g}) \rightarrow H^0(C,\Sigma^{\an}|_C) \rightarrow H^1(C,\Lambda_C) \rightarrow 0.
\end{equation}

By \Cref{coboundary}, we know that the obstruction to define a global logarithm of a section is given by some Galois cohomology class. We want to explain the interplay between the relevant Galois cohomology group and the exact sequence in \Cref{exactSeqCoh}. Let's fix the notation $G:=\pi_1(C,s)$ for a base point $s \in C$ and denote by $\rho_C:G \to \textrm{Mon}(\cA|_C)\subseteq \textrm{Sp}_{2g}(\bZ)$ the monodromy representation associated to the abelian scheme $\cA|_C \to C$. We give a description of the group $H^1(C, \Lambda_C)$ in terms of the Galois cohomology with respect to the action of $G$ on $\bZ^{2g}$ induced by the projection $\rho_C$. We have the following

\begin{prop}\label{cohoIso}
	With the above notation the Čech cohomology group $H^1(C,\Lambda_C)$ is canonically isomorphic to the group $H^1(G,\bZ^{2g})$, where the action of $G$ on $\bZ^{2g}$ is induced by the projection $\rho_C: G \rightarrow \textrm{Mon}(\cA|_C)$.
\end{prop}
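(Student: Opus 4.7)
The plan is to recognise $H^1(C,\Lambda_C)$ as the cohomology of a $K(G,1)$-space with coefficients in a local system, and then invoke the classical identification with group cohomology. Since $C$ is a smooth affine curve over $\mathbb{C}$, its analytification is a non-compact Riemann surface which deformation retracts onto a finite wedge of circles; in particular $\pi_q(C)=0$ for all $q\geq 2$, so $C$ is a $K(G,1)$ and its universal cover $u\colon \widetilde{C}\to C$ is contractible. Moreover, by construction $\Lambda_C$ is a locally constant sheaf with stalk $\Lambda_s\cong \mathbb{Z}^{2g}$, and the monodromy action of $G=\pi_1(C,s)$ on the stalk is exactly $\rho_C$ (this is a tautological reformulation of how the monodromy representation on periods was defined).

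First I would make the description of $\Lambda_C$ explicit, writing
\[
\Lambda_C \;\cong\; (\widetilde{C}\times \mathbb{Z}^{2g})/G,
\]
where $G$ acts diagonally through deck transformations on $\widetilde{C}$ and through $\rho_C$ on $\mathbb{Z}^{2g}$. This identifies, for every open $V\subseteq C$, the group $\Gamma(V,\Lambda_C)$ with the $G$-equivariant locally constant maps $u^{-1}(V)\to \mathbb{Z}^{2g}$. Then I would apply the Cartan--Leray spectral sequence of the Galois cover $u\colon\widetilde{C}\to C$, namely
\[
E_2^{p,q}=H^p\!\left(G,H^q(\widetilde{C},u^{*}\Lambda_C)\right) \;\Longrightarrow\; H^{p+q}(C,\Lambda_C).
\]
The pullback $u^{*}\Lambda_C$ is the constant sheaf $\underline{\mathbb{Z}^{2g}}$ on $\widetilde{C}$, and since $\widetilde{C}$ is contractible one has $H^0(\widetilde{C},u^{*}\Lambda_C)=\mathbb{Z}^{2g}$ and $H^q(\widetilde{C},u^{*}\Lambda_C)=0$ for $q\geq 1$. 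Hence the spectral sequence degenerates at the $E_2$-page and collapses to canonical isomorphisms
\[
H^p(C,\Lambda_C)\;\cong\; H^p\!\left(G,\mathbb{Z}^{2g}\right)
\]
for every $p\geq 0$, with $G$ acting on $\mathbb{Z}^{2g}$ through $\rho_C$; specialising to $p=1$ yields the statement.

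The main obstacle is not really conceptual but presentational: one must set up the Cartan--Leray spectral sequence in a framework consistent with the Čech cohomology appearing in the statement, and verify asphericity of $C$. Both are standard (the first can be replaced by a direct construction of the isomorphism in degree one, matching a Čech $1$-cocycle for $\Lambda_C$ on a good cover of $C$ with its associated crossed homomorphism $G\to\mathbb{Z}^{2g}$ obtained by lifting the cocycle to $\widetilde{C}$), so the argument reduces to routine checks. For our purposes, the explicit description in degree one will moreover match, under the isomorphism, the cocycle $c\colon G\to\mathbb{Z}^{2g}$ of Section~2 with the class in $H^1(C,\Lambda_C)$ measuring the obstruction to globalising $\log_\sigma$ as provided by the connecting map in~\Cref{exactSeqCoh}.
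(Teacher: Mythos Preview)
Your argument is correct and matches the paper's approach: the paper simply cites the general fact that for a space with contractible universal cover, \v{C}ech cohomology of a local system coincides with group cohomology of the fundamental group acting on the stalk, and your Cartan--Leray spectral sequence is precisely the standard proof of that fact (the paper also mentions the alternative explicit cocycle-matching argument you sketch at the end). One minor caveat: in the paper's construction $C=q^{-1}(C')$ need not be smooth or irreducible, so calling it a ``non-compact Riemann surface'' is slightly imprecise; the essential point (contractibility of the universal cover) still holds, since any connected affine complex curve has the homotopy type of a $1$-dimensional CW complex.
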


\begin{proof}
It follows from \cite[Proposition 2.1.1]{Ach} since the universal cover of $C$ is contractible. One can also generalize the independent proof given in \cite[Proposition 4.1]{CZ}. 
\end{proof}

Therefore, thanks to the exact sequence \Cref{exactSeqCoh} and \Cref{cohoIso} we obtain a surjective group homomorphism
\[
\Sigma^{\an}|_C(C) \ni \tau \mapsto [\tau] \in H^1(C,\Lambda_C) \simeq H^1(G,\bZ^{2g})
\]
associating to every holomorphic section a cohomology class, which measures the obstruction of having global abelian logarithm.

\begin{rem}\label{sameForC'}
    Notice that by \Cref{LefCurve} also the curve $C'$ is a Stein space. Moreover, by \Cref{wellDefPeriods} we still obtain  $H^0(C',\Lambda_{C'})=0$. Therefore, the analogous results of \Cref{exactSeqCoh} and \Cref{cohoIso} hold if we replace the curve $C$ by the curve $C'$. In particular, for some base point $s' \in C'$ we have the surjective group homomorphism
    \[
    \Sigma'^{,\an}|_{C'}(C') \ni \tau' \mapsto [\tau'] \in H^1(C',\Lambda_{C'}) \simeq H^1(\pi_1(C',s'),\bZ^{2g}),
    \]
    where the action of $\pi_1(C',s')$ on $\bZ^{2g}$ is induced by the monodromy representation $\rho_{C'}: \pi_1(C',s') \rightarrow \textrm{Mon}(\cA'|_{C'})$.
\end{rem}

\begin{rem}\label{injectiveCoho}
    Let us point out the relationship between the cohomology groups $H^1(\pi_1(S'(\bC),s'),\bZ^{2g})$ and $H^1(\pi_1(C',s'),\bZ^{2g})$. By part $(iii)$ of \Cref{LefCurve} the homomorphism $i'_*: \pi_1(C',s') \to \pi_1(S',s')$ is surjective and this induces a homomorphism
    \[
    f:H^1(\pi_1(S'(\bC),s'),\bZ^{2g}) \to H^1(\pi_1(C',s'),\bZ^{2g});
    \]
    more precisely, if $c': \pi_1(S'(\bC),s') \to \bZ^{2g}$ is a cocycle representing some cohomology class $[c']_{S'}$ in $H^1(\pi_1(S'(\bC),s'),\bZ^{2g})$ we can define $f([c']_{S'})$ as the cohomology class in $H^1(\pi_1(C',s'),\bZ^{2g})$ of the cocycle $c'':\pi_1(C',s') \to \bZ^{2g}$ defined as $c''(h):=c'(i'_*(h))$. The analogous relation holds for the cohomology groups $H^1(\pi_1(S(\bC),s),\bZ^{2g})$ and $H^1(\pi_1(C,s),\bZ^{2g})$.
\end{rem}

We are now ready to prove the first main result of the paper:

\paragraph{Proof of \Cref{mainThm1}.}
    Let $C$ be a curve satisfying \Cref{LefCurve}. We assume $M_\sigma^\textrm{rel}=\{0\}$ and we prove that $\sigma$ is unramified over $S$. By part $(vii)$ of \Cref{LefCurve} this implies $M_{\sigma_C}^\textrm{rel}=\{0\}$, or equivalently $\ker{\rho_C}\subseteq \ker{\theta_{\sigma_C}}$. Let ${q_{C}}_\ast:\pi_1(C,s) \to \pi_1(C',s')$ be the homomorphism appearing in \Cref{topDiagram}. By part $(v)$ of \Cref{LefCurve} we have $\ker{{q_{C}}_\ast}\subseteq \ker{\rho_C}$; thus, we have $\ker{{q_{C}}_\ast}\subseteq \ker{\theta_{\sigma_C}}$. Introducing the notation $\overline{G}:={q_{C}}_\ast\pi_1(C,s)$ we obtain that there exists a homomorphism $f:\overline{G}\rightarrow \textrm{SL}_{2g+1}(\bZ)$ such that the diagram
	\begin{equation}\label{factorDiag}
    \begin{tikzcd}
		\pi_1(C,s) \arrow[two heads]{r}{{q_{C}}_\ast} \arrow[swap]{rd}{\theta_{\sigma_C}} & \overline{G} \arrow[dashed]{d}{f}\\
		& \textrm{SL}_{2g+1}(\bZ)
	\end{tikzcd}
    \end{equation}
	commutes; in other words, $\theta_{\sigma_C}$ factors through ${q_{C}}_\ast$. By using the same notation as in \Cref{simultaneousRepresentation}, this means that for $h \in \pi_1(C,s)$ the vector $c(h)$ only depends on the element $\overline{h}:={q_{C}}_\ast(h) \in \overline{G}$. Then we obtain a cocycle
    \begin{equation}\label{inducedCocycle}
        \overline{G}\ni \overline{h} \mapsto c(h) \in \bZ^{2g},
    \end{equation}
	representing a certain cohomology class in $H^1(\overline{G},\bZ^{2g})$ which we denote with $[\sigma_C]_{\overline{G}}$. In other words, we have obtained an injective homomorphism $H^1(\pi_1(C,s),\bZ^{2g}) \hookrightarrow H^1(\overline{G},\bZ^{2g})$ which can be used to define a homomorphism
    \[
    \xi: H^0(\Sigma^{\an}|_C,C) \to H^1(\overline{G},\bZ^{2g}),
    \]
    which to a section $\tau$ over $C$ associates a cohomology class which we denote by $[\tau]_{\overline{G}}$. We want to show that $[\sigma_C]_{\overline{G}}$ can be canonically identified with a cohomology class $[\sigma]_{C'}$ in $H^1(\pi_1(C',s'),\bZ^{2g})$.

    To this end, let us construct some homomorphisms between cohomology groups. Since we are assuming $M_\sigma^\textrm{rel}=\{0\}$ and the map $q_*:\pi_1(S(\bC),s) \to \pi_1(S'(\bC),s')$ is surjective by \Cref{algTopPropConstr}, with the same reasoning as in \Cref{factorDiag} we obtain an injective homomorphism
    \[
    \zeta_1:H^1(\pi_1(S(\bC),s),\bZ^{2g}) \hookrightarrow H^1(\pi_1(S'(\bC),s'),\bZ^{2g})
    \]
    which to a cohomology class $[\tau]_S$ associates a cohomology class denoted with $[\tau]_{S'}$. By \Cref{injectiveCoho} we have a homomorphism
    \[
    \zeta_2:=f: H^1(\pi_1(S'(\bC),s'),\bZ^{2g}) \to H^1(\pi_1(C',s'),\bZ^{2g}).
    \]
    By composition we obtain a map
    \[
    \zeta_3:=\zeta_2\circ \zeta_1:H^1(\pi_1(S(\bC),s),\bZ^{2g}) \to H^1(\pi_1(C',s'),\bZ^{2g})
    \]
    which to a class $[\tau]_S$ associates a class denoted with $[\tau]_{C'}$. Since $\overline{G}$ is a subgroup of $\pi_1(C',s')$ the restriction homomorphism induces a homomorphism between cohomology groups
    \[
    \eta:H^1(\pi_1(C',s'),\bZ^{2g}) \to H^1(\overline{G},\bZ^{2g}).
    \]

    Notice that we cannot identify the cohomology groups $H^1(\overline{G},\bZ^{2g})$ and $H^1(\pi_1(C',s'),\bZ^{2g})$, but we want to prove that $\eta$ becomes an isomorphism when restricted to cohomology classes coming from global sections in $\Sigma(S)$. Thus, let us denote by $r:H^0(S,\Sigma^{\an}) \to H^0(C,\Sigma^{\an}|_C)$ the restriction map which to a section $\tau$ over $S$ associates the restriction $\tau_C$. Define the homomorphism
    \[
    \zeta:r(H^0(S,\Sigma^{\an})) \to H^1(\pi_1(C',s'),\bZ^{2g}), \qquad \tau_C \mapsto [\tau]_{C'},
    \]
    where $\tau \in \Sigma(S)$ is such that $r(\tau)=\tau_C$: notice that this map is well-defined by \Cref{LefCurve}. In fact, if $\tau_1, \tau_2 \in \Sigma(S)$ are such that $\tau_{1,C} = \tau_{2,C}$, then $\theta_{\tau_1}=\theta_{\tau_2}$ by part $(iii)$ and $(vi)$ of \Cref{LefCurve}. This implies $[\tau_1]_S=[\tau_2]_S$ and so $[\tau_1]_{C'}=[\tau_2]_{C'}$.

    Furthermore, by restriction of $\xi$ we define the homomorphism
    \[
    \xi':=\xi|_{r(H^0(S,\Sigma^{\an}))}: r(H^0(S,\Sigma^{\an})) \to H^1(\overline{G},\bZ^{2g}), \qquad \tau_C \mapsto [\tau_C]_{\overline{G}}.
    \]
    We claim that $\textrm{Im}(\zeta) \cong \textrm{Im}(\xi|_{r(H^0(S,\Sigma^{\an})})$. The restriction of $\eta$ to $\textrm{Im}(\zeta)$ induces a homomorphism
    \[
    \eta':=\eta|_{\textrm{Im}(\zeta)}: \textrm{Im}(\zeta) \to H^1(\overline{G},\bZ^{2g}), \qquad [\tau]_{C'} \mapsto [\tau_C]_{\overline{G}},
    \]
    where $r(\tau)=\tau_C$. Thanks to \Cref{topDiagram}, we have $\textrm{Im}(\eta') = \textrm{Im}(\xi')$ and the following commutative diagram:
    \[
    \begin{tikzcd}
        r(H^0(S,\Sigma^{\an})) \arrow{r}{\zeta} \arrow[swap]{dr}{\xi'} & \textrm{Im}(\zeta)\arrow{d}{\eta'}\arrow[r, phantom,"\subseteq"] & H^1(\pi_1(C',s'),\bZ^{2g}) \\
         & \textrm{Im}(\xi') \arrow[r, phantom,"\subseteq"] & H^1(\overline{G},\bZ^{2g}).
    \end{tikzcd}
    \]
    Let's prove that $\eta'$ is an isomorphism. Notice that it is well-defined and surjective by construction, so it's enough to prove that it is injective. To this end, consider $\tau_1, \tau_2 \in \Sigma(S)$ such that $[\tau_{1,C}]_{\overline{G}} = [\tau_{2,C}]_{\overline{G}}$. This implies
    \[
    [\tau_{1,C} - \tau_{2,C}]_{\overline{G}} = [\tau_{1,C}]_{\overline{G}} - [\tau_{2,C}]_{\overline{G}} = 0.
    \]
    Thanks to the injective homomorphism obtained by \Cref{inducedCocycle}, we get $[\tau_{1,C} - \tau_{2,C}]_{C}=0$. By part $(vi)$ of \Cref{LefCurve}, we also obtain $[\tau_1 - \tau_2]_S = 0$. By applying $\zeta_3$ we get $[\tau_1 - \tau_2]_{C'}=0$, which means that $\eta'$ is injective.
    
    Thus, by what we have just proven and by \Cref{sameForC'} the section $\sigma_C$ determines a cohomology class $[\sigma_C]_{C'}$ in $H^1(\pi_1(C',s'),\bZ^{2g})\cong H^1(C',\Lambda_{C'})$. Again thanks to \Cref{sameForC'}, we have that there exists a (possibly transcendental) section $\sigma':C' \rightarrow \cA'|_{C'}$ whose cohomology class $[\sigma']$ in $H^1(\pi_1(C',s'),\bZ^{2g})$ coincides with the class $[\sigma_C]_{C'}$ defined above, i.e. $[\sigma']=[\sigma_C]_{C'}$. Therefore, by \Cref{coboundary} we obtain a section of the form $\sigma_C - q_C^*(\sigma') \in \Sigma^{\an}(C)$ that admits a logarithm; let us write
	\begin{equation}\label{equationExp}
		\sigma_C - q_C^*(\sigma') =\exp(f),
	\end{equation}
	for some holomorphic function $f:C \rightarrow \bC^g$.
	
	Now, by using properties of the operators $q_C^*$ and $\Tr$, we will get a contradiction. Applying first the trace operator and then the pullback operator, by part $(ii)$ of \Cref{LefCurve} we obtain:
    \[
    q^*_C\Tr(\sigma_C) - \deg{q}\cdot q^*_C(\sigma')=\exp(q^*_C\Tr(f)).
    \]
    By using \Cref{equationExp}, we finally obtain
    \[
    q^*_C\Tr(\sigma_C)-\deg{q}\cdot \sigma_C = \exp(q^*_C\Tr(f)-\deg{q}\cdot f).
    \]
    Thus, the section $q^*_C\Tr(\sigma_C)-\deg{q}\cdot \sigma_C$ is algebraic and admits a global logarithm. Notice that
    \[
    q^*_C\Tr(\sigma_C)-\deg{q}\cdot \sigma_C = (q^*\Tr(\sigma)-\deg{q}\cdot \sigma)_{|_C}.
    \]
    By part $(vi)$ of \Cref{LefCurve}, the section $q^*\Tr(\sigma)-\deg{q}\cdot \sigma$ is also an algebraic section admitting a global logarithm on $S(\bC)$. By \Cref{wellDefLog} we must have $\deg{q}\cdot\sigma=q^*\Tr(\sigma)$. By \Cref{prop:unramifiedMiddle}, the section $\deg{q}\cdot\sigma$ is unramified. By \Cref{cor:divisible_ram} we conclude that $\sigma$ is an unramified section.\\
    \qed

%This identity implies that for each point $y' \in S'$ there exists a torsion point $T_{y'}$ with order $\deg{q}$ such that $\deg{q}$ divides $\Tr(\sigma)(y')-T_{y'}$. Passing to logarithms we obtain that the Betti coordinates of $T_{y'}$ are rational numbers continuous with respect to $y'$, hence locally constant. Since $S'$ is connected, this implies that there exists a torsion section $\tau'$ of order $\deg{q}$ such that $\Tr(\sigma)-\tau'$ is divisible by $\deg{q}$. Define the sections
%\begin{equation}\label{eq:pullback}    \widetilde{\sigma}:=q^*\left(\frac{\Tr(\sigma)-\tau'}{\deg{q}}\right), \qquad \tau:=q^*\tau'.
%\end{equation}
%We obtain $\deg{q}\cdot\widetilde{\sigma}=\deg{q}\cdot\sigma - \tau$. This implies that $\sigma - \widetilde{\sigma}$ is a torsion section. By \Cref{eq:pullback}, \Cref{prop:unramifiedMiddle} and \Cref{cor:tor_ram} we conclude that $\sigma$ is an unramified section.\\

%Notice that $\sigma_C - q_C^*(\sigma')$ cannot be the zero-section, more precisely we can prove that $\sigma'$ is a holomorphic section which is not algebraic. In fact, if we assume by contradiction that $\sigma'$ is an algebraic section then we get\todo{Dire perché} $q_C^*(\sigma')=\sigma_C$. By part $(ii)$ of \Cref{LefCurve} and \Cref{propTrPu} we have $(q^*\Tr(\sigma))|_C=q^*_C\Tr(\sigma)|_{C'} = q^*_C\Tr(\sigma_C) =\deg{q}\cdot q^*_C(\sigma')=\deg{q}\cdot\sigma_C = (\deg{q}\cdot\sigma)|_C$. This implies that $\sigma=q^*(\Tr(\sigma)/\deg{q})$, which is a contradiction since $\sigma$ is a ramified section.\todo{Spiegare tutto meglio}

\subsection{Rank of relative monodromy}\label{section:3.3}

Here, we prove \Cref{mainThm2}. 

We keep the same notations as above, in particular recall that $\rho: \pi_1(S(\bC),s) \to \textrm{Mon}(\cA)$ denotes the monodromy representation associated to the abelian scheme $\phi: \cA \to S$, which we are assuming to induce an irreducible action of $\textrm{Mon}(\cA)$ on the lattice of periods. In what follows we identify the $\bZ$-module $M_\sigma^\textrm{rel}$ with the corresponding $\bZ$-submodule of the lattice of periods.

\paragraph{Proof of \Cref{mainThm2}}
    Let us proceed by contradiction by assuming that $M_\sigma^\textrm{rel}$ is a submodule of $\bZ^{2g}$ of rank $g'$ with $g' < 2g$. By the non-triviality of $M_\sigma^\textrm{rel}$ proved in \Cref{mainThm1} we have $g'>0$. In other words, this means that for every $h \in H:=\ker{\rho}$ the logarithm $\log_\sigma$ is transformed by $h$ as
    \begin{equation*}%\label{transfH}
        \log_\sigma \xmapsto{h} \log_\sigma + u_1^h\mu_1 + \cdots + u_{g'}^h\mu_{g'},
    \end{equation*}
    where $\mu_1, \ldots, \mu_{g'}$ are fixed non-zero periods (depending on $\sigma$). Recall that, for $k \in \pi_1(S(\bC),s)$ the logarithm $\log_\sigma$ will be sent by $k$ to a new determination of the form
    \[
    \log_\sigma + v_1\omega_1 + \cdots + v_{2g} \omega_{2g},
    \]
    where $v_1, \ldots, v_{2g} \in \bZ$ and $\omega_1, \ldots, \omega_{2g}$ denote period functions. Since we are assuming that the action of $\textrm{Mon}(\cA)$ is irreducible, there exists $k \in \pi_1(S(\bC),s)$ such that $\rho(k) \cdot M_\sigma^{\textrm{rel}} \nsubseteq M_\sigma^{\textrm{rel}}$, i.e. the action of $\rho(k)$ does not preserve the relative monodromy group. Therefore, we can fix $k \in \pi_1(S(\bC),s)$ and $h \in H$ such that $\rho(k^{-1})\cdot (u_1^h\mu_1 + \cdots + u_{g'}^h\mu_{g'}) \notin M_\sigma^{\textrm{rel}}$; define $h':=k^{-1}hk$ and observe that $h' \in H$ since $H \unlhd G$. If we look at the monodromy action of $h'$ on the abelian logarithm we obtain
    \begin{align*}
        \log_\sigma &\xmapsto{k} \log_\sigma + v_1\omega_1 + \cdots + v_{2g} \omega_{2g} \xmapsto{h} \log_\sigma + v_1\omega_1 + \cdots + v_{2g} \omega_{2g} + u_1^h\mu_1 + \cdots + u_{g'}^h\mu_{g'}\\
        & \xmapsto{k^{-1}} \log_\sigma +\rho(k^{-1})\cdot(u_1^h\mu_1 + \cdots + u_{g'}^h\mu_{g'}).
    \end{align*}
    Since $h' \in H$, then we have $\rho(k^{-1})\cdot (u_1^h\mu_1 + \cdots + u_{g'}^h\mu_{g'}) \in M_\sigma^{\textrm{rel}}$, which contradicts the choice of $k$ and $h$. This concludes the proof.
\qed

\subsection{The case of thin Monodromy of periods}\label{sec:3.5}
In \cite{And_mon}, André employs Hodge-theoretic methods to study the problems considered in this paper. Under the following hypotheses, he establishes  $M_\sigma^{\textrm{rel}} \cong \mathbb{Z}^{2g}$:
\begin{itemize}
    \item[$(i)$] $\cA \to S$ is without fixed part, even after pull-back to a finite \'etale cover of $S$.

    \item[$(ii)$] $\mathbb Z\sigma(S)$ is Zariski dense in $\mathcal A$.

    \item[$(iii)$] The monodromy group $\textrm{Mon}(\cA)$ is not thin.
\end{itemize}
Essentially, André's hypothesis $(iii)$ serves as an alternative to our assumption on the modular map~$p$. The geometric distinction is clear: while we require a certain ``sparseness'' condition on isomorphic fibers of the family, hypothesis $(iii)$ intuitively imposes that the image of the modular map is ``sufficiently large.'' A more precise comparison of these conditions, or a characterization of when they coincide, would be an interesting direction for further investigation.

Notice that our hypotheses in \Cref{mainThm2} don't exclude the case when $\textrm{Mon}(\cA)$ is thin. Indeed, as a consequence of \Cref{mainThm2} and the study of fiber products, we provide below an explicit construction showing a case where  $M_\sigma^\textrm{rel}$ is of maximal rank even though $\textrm{Mon}(\cA)$ is thin.

\begin{expl}\label{exa:thinMon}
    We consider the same construction as in \Cref{expl:tor_ram}. Note that the two elliptic schemes $\cE_1\to B$ and $\cE_2 \to B$ are not isogenous, since they have a diﬀerent set of bad reduction. The fundamental group $G$ of $B(\mathbb C)$ is free on three generators, and the monodromy representation on the periods of $\phi: \cA \to B$ gives a homomorphism $\rho:G \to \Gamma_2 \times \Gamma_2$, where
    \[
    \Gamma_2:=\left\langle
    \begin{pmatrix}
        1 & 2\\
        0 & 1
    \end{pmatrix},
    \;
    \begin{pmatrix}
        1 & 0\\
        2 & 1
    \end{pmatrix}
    \right\rangle \le \textrm{SL}_2(\mathbb Z).
    \]
    We denote the monodromy group of periods as $\textrm{Mon}(\cA):=\rho(G)$. Since $\phi:\cA \to B$ is the product of two non-isogenous elliptic schemes, then the Zariski closure of $\textrm{Mon}(\cA)$ is the whole $\textrm{SL}_2\times \textrm{SL}_2$ \cite[Theorem 5.3.10]{PONCELET}.

    Notice that $\textrm{Mon}(\cA)$ is generated by three elements, and the same holds for its projection on the abelianized group of $\Gamma_2 \times \Gamma_2$, which is $\mathbb Z^4$. In particular, the projection in the abelianized group must have infinite index. Therefore, the group $\textrm{Mon}(\cA)$ is not of finite index in the integral points of its Zariski-closure: in other words, the group $\textrm{Mon}(\cA)$ is thin.

    The abelian scheme $\phi\colon\cA\to B$ has trivial Mordell-Weil rank. Let us consider the map $\sigma:B \to \cA$ defined by
    \[
    \sigma(\lambda):=\left(\left(2,\sqrt{2(2-\lambda)}\right), \left(3, \sqrt{6(1+\lambda)}\right)\right).
    \]
    This map is a multisection of the abelian scheme $\phi$, but it becomes a section whether we consider the base change induced by the degree $4$ field extension $\bC(B) \subseteq \bC(B)(\psi,\mu)$ where $\psi^2 + \mu^2 = 3$ and $\mu^2=1+\lambda$. Denote by $\phi': \cA' \to B'$ the base change we just described. The monodromy of periods $\textrm{Mon}(\cA')$ is a finite index subgroup of $\textrm{Mon}(\cA)$: in particular, it is Zariski dense in $\textrm{SL}_2\times \textrm{SL}_2$ and it has infinite index in $\Gamma_2 \times \Gamma_2$; so $\textrm{Mon}(\cA')$ is still thin. The morphism $\sigma$ is a section of $\phi'$ whose image is not contained in any proper group subscheme, thus the relative monodromy group $M_\sigma^\textrm{rel}$ is isomorphic to $\bZ^4$ by \cite[Theorem 4.11]{T1}. Notice that $\sigma$ is ramified, since the field extension defining $B'$ induces a ramified morphism onto $B$ and therefore this gives rise to a ramified (finite flat) modular map.

    %\textcolor{red}{PRENDENDO COME COMPONENTI DUE SEZIONI CON ASCISSA COSTANTE 2 OTTENIAMO UN ESEMPIO IN CUI LA SEZIONE È DEFINITA SU UN CAMBIO BASE NON RAMIFICATO MA È DI NON-TORSIONE, DIMOSTRANDO CHE L'ANALOGO DI MOK-TO NON VALE IN QUESTI CASI IN CUI LA BASE NON È SURIETTIVA SULLA FAMIGLIA UNIVERSALE. IN PIÙ, QUESTA SEZIONE È CONTENUTA IN UN SOTTOSCHEMA PROPRIO E HA MONODROMIA RELATIVA ISOMORFA A $\bZ^2$. FORSE RIUSCIAMO ANCHE A DIRE QUALCOSA SUL RANGO DI $\phi'$ E SULLE MONODROMIE RELATIVE DI OGNI SEZIONE.}
\end{expl}

\section{Some applications}\label{section4}
We present some useful applications of our results. As a consequence of the non-triviality of relative monodromy we obtain a strong version of Manin's kernel theorem under some hypothesis on unramified sections. In the case of elliptic schemes over a curve the same result was obtained in \cite{CZ}. We refer to the Betti coordinates defined in \Cref{perAbLog} (see \Cref{log}).

\begin{thm}
    Let $\phi:\cA \to S$ be an abelian scheme without fixed part such that the modular map $p:S \to T \subseteq \mathbb A_g$ is generically finite. Denote by $U_p \subseteq S$ the Zariski open subset on which $p$ is finite and flat. Assume that all the sections unramified over $U_p$ are torsion. Let $\sigma:S \to \cA$ be a section. If the Betti coordinates $\beta_{\sigma,i}$ with $i=1, \ldots, 2g$ are globally defined functions on $S(\bC)$, then $\sigma$ is a torsion section. In that case, the Betti coordinates are rational constants.
\end{thm}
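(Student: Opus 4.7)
The plan is to deduce this result directly from \Cref{mainThm1}, by showing that the global definability of the Betti coordinates forces the triviality of $M_\sigma^{\textrm{rel}}$. The argument proceeds in three short steps.

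First, I would analyze the monodromy action on the Betti coordinates. On any simply connected open set $V \subseteq S(\bC)$ where both $\log_\sigma$ and the period map are defined, we have the defining identity $\log_\sigma = \sum_{i=1}^{2g}\beta_{\sigma,i}\,\omega_i$. Given a loop $h \in \ker\rho$, analytic continuation along $h$ fixes each $\omega_i$, while by \Cref{actionLog} it transforms the logarithm as $\log_\sigma \mapsto \log_\sigma + u_1^h\omega_1 + \cdots + u_{2g}^h\omega_{2g}$ with $u_i^h \in \bZ$. Comparing the two expressions shows that the Betti coordinates are updated by $\beta_{\sigma,i} \mapsto \beta_{\sigma,i} + u_i^h$. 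If every $\beta_{\sigma,i}$ extends to a single-valued function globally defined on $S(\bC)$, then each $u_i^h$ must vanish for every $h \in \ker\rho$. In terms of the cocycle $c$ from \Cref{simultaneousRepresentation}, this says $c(h)=0$ on $\ker\rho$, whence $M_\sigma^{\textrm{rel}} = \theta_\sigma(\ker\rho) = \{0\}$.

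Second, I would invoke \Cref{mainThm1}, whose hypotheses coincide with those assumed in the present statement. The vanishing of $M_\sigma^{\textrm{rel}}$ forces $\sigma$ to be unramified over $U_p$. By the standing assumption that every section unramified over $U_p$ is torsion, we conclude that $\sigma$ itself is a torsion section.

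Finally, for the last assertion, suppose $\sigma$ has order $m \in \bZ_{>0}$. Then $m\log_\sigma(s)$ lies in the period lattice $\Lambda_s$ for every $s \in V$, so we may write $m\log_\sigma = \sum_{i=1}^{2g} n_i\,\omega_i$. The coefficients $n_i$ are integer-valued continuous functions on $V$, hence locally constant; by connectedness of $V$ they are constants, and therefore $\beta_{\sigma,i} = n_i/m$ is a rational constant on $V$, which, by global definability, extends as the same rational constant to all of $S(\bC)$. I do not expect any substantive obstacle here: the full technical content has been absorbed into \Cref{mainThm1}, and the present deduction is essentially a clean translation between the cocycle $c$ and the Betti coordinates.
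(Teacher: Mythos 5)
Your proof is correct and follows essentially the same route as the paper's (which is a two-line deduction: global Betti coordinates $\Rightarrow$ $M_\sigma^{\textrm{rel}}$ trivial, then apply \Cref{mainThm1} in the contrapositive and use the hypothesis that sections unramified over $U_p$ are torsion). You have simply spelled out the two implicit steps: the monodromy computation showing $c(h)=0$ for $h\in\ker\rho$, and the elementary argument that torsion sections have locally constant rational Betti coordinates (the latter the paper treats as well known, cf.\ \Cref{exaTorsSec}).
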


\begin{proof}
    If the Betti coordinates are globally defined on $S(\bC)$ then the relative monodromy group $M_\sigma^{\textrm{rel}}$ is trivial. The claim then follows by \Cref{mainThm1}.\\
\end{proof}

Our theorem about the rank of the relative monodromy group \Cref{mainThm2} gives a different proof of a result due to André \cite[Theorem 3]{André} about the algebraic independence of logarithms under our hypothesis on the abelian scheme $\cA \to S$. Let's briefly describe the setting.

Recall that the monodromy action of the fundamental group $\pi_1(S(\bC),s)$ on periods and the logarithm of a section $\sigma$ induces a representation $\theta_\sigma: \pi_1(S(\bC),s) \rightarrow \textrm{SL}_{2g+1}(\bZ)$ described in \Cref{simultaneousRepresentation}. Consider the projection $\theta_\sigma(\pi_1(S(\bC),s)) \to \textrm{Sp}_{2g}(\bZ)$, whose image is the monodromy group $\textrm{Mon}(\cA)$. By general theory, the Zariski closure $\overline{\theta_\sigma(\pi_1(S(\bC),s))}$ in $\textrm{SL}_{2g+1}(\mathbb Z)$ is the differential Galois group of the Picard-Vessiot extension of $\bC(S)$ generated by the coordinates of $\omega_1, \ldots, \omega_{2g}, \log_\sigma$ and their directional derivatives along a tangent vector field $\partial$ with respect to the Gauss-Manin connection. Clearly the homomorphism $\theta_\sigma(\pi_1(S(\bC),s)) \to \textrm{Sp}_{2g}(\bZ)$ extends to an algebraic group homomorphism
\[
\xi: \overline{\theta_\sigma(\pi_1(S(\bC),s))} \to \textrm{Sp}_{2g}(\mathbb Z)
\]

\begin{thm}
    Let $\phi:\cA \to S$ be an abelian scheme without fixed part such that the modular map $p:S \to T \subseteq \mathbb A_g$ is generically finite. Denote by $U_p \subseteq S$ the Zariski open subset on which $p$ is finite and flat. Assume that the action of the monodromy group $\textrm{Mon}(\cA)$ is irreducible. If a section $\sigma:S \to \cA$ is ramified over $U_p$, then the kernel $\ker{\xi}$ is isomorphic to $\mathbb{G}_a^{2g}$. In particular, the coordinates of $\log_\sigma$ have transcendence degree $g$ over $\bC(S)(\omega_1, \ldots, \omega_{2g})$, where $\bC(S)(\omega_1, \ldots, \omega_{2g})$ denotes the extension of $\bC(S)$ generated by the coordinates of periods.
\end{thm}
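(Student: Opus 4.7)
The plan is to derive this from \Cref{mainThm2} via the Picard--Vessiot correspondence. First I would apply \Cref{mainThm2} to obtain $M_\sigma^{\textrm{rel}} \cong \bZ^{2g}$. Any $\tau \in \ker\xi$ sits inside $\overline{\theta_\sigma(\pi_1(S(\bC),s))}$ and projects trivially to $\textrm{Sp}_{2g}$, so by the matrix shape in \Cref{simultaneousRepresentation} it must lie in the unipotent algebraic subgroup $U$ consisting of matrices $\begin{psmallmatrix} I_{2g} & v \\ 0 & 1 \end{psmallmatrix}$ with $v \in \bC^{2g}$; note $U \cong \mathbb{G}_a^{2g}$. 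Since $M_\sigma^{\textrm{rel}} \subseteq \ker\xi \subseteq U$, and a full-rank lattice in $\bZ^{2g} \subset \bC^{2g}$ is Zariski-dense in $\mathbb{G}_a^{2g}$, this yields $\ker\xi = U \cong \mathbb{G}_a^{2g}$.

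To deduce the transcendence statement, I would invoke the Picard--Vessiot correspondence. Let $K$ be the smallest $\partial$-subfield of $F$ containing $\bC(S)$ together with all coordinates of the $\omega_i$, where $F$ denotes the full Picard--Vessiot extension further adjoining the coordinates of $\log_\sigma$. Then the differential Galois group of $F/K$ equals $\ker\xi \cong \mathbb{G}_a^{2g}$, acting on $\log_\sigma$ by translations $\tau_v(\log_\sigma) = \log_\sigma + \sum_i v_i\omega_i$ for $v \in \bC^{2g}$. Suppose for contradiction that $P \in K[x_1,\ldots,x_g] \setminus \{0\}$ vanishes on $\log_\sigma = (\ell_1,\ldots,\ell_g)$. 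Applying each $\tau_v$ yields the polynomial identity $P(\log_\sigma + v \cdot \omega) = 0$ in $v \in \bC^{2g}$; expanding in $v$ and extracting the linear terms gives $\omega^\top \nabla P(\log_\sigma) = 0$, where $\omega^\top$ is the $2g \times g$ matrix of period coordinates. Since $\omega_1(s), \ldots, \omega_{2g}(s)$ already span $\bC^g$ over $\bR$ (and hence over $\bC$), the map $y \mapsto (\omega_i \cdot y)_i$ from $\bC^g$ to $\bC^{2g}$ is injective, so $\omega^\top$ has rank $g$. Therefore $\nabla P(\log_\sigma) = 0$, and iterating the argument on each partial derivative of $P$ forces all partial derivatives of $P$ to vanish at $\log_\sigma$. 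The top-degree derivatives are constants in $K$, so $P$ must be identically zero, a contradiction. Hence the $\ell_i$ are algebraically independent over $K$, and in particular over the smaller field $\bC(S)(\omega_1,\ldots,\omega_{2g})$.

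The main technical obstacle will be setting up the Picard--Vessiot correspondence coherently: identifying the correct differential base field (closed under the Gauss--Manin derivation), and confirming that $\overline{\theta_\sigma(\pi_1(S(\bC),s))}$ is the differential Galois group of $F$ over $\bC(S)^\partial$, so that $\ker\xi$ identifies with the Galois group of $F/K$. These ingredients are standard but deserve care in our geometric setting. Once they are in place, the identification $\ker\xi \cong \mathbb{G}_a^{2g}$ from \Cref{mainThm2} and the unipotent-translation argument for algebraic independence are both formal.
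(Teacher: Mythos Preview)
Your argument is correct. For the identification $\ker\xi\cong\mathbb G_a^{2g}$ you do exactly what the paper does, only spelling out the step it leaves implicit: the paper just says that $M_\sigma^{\textrm{rel}}\cong\mathbb G_a^{2g}(\bZ)$ ``implies the first part of the statement'', whereas you make explicit the sandwich $M_\sigma^{\textrm{rel}}\subseteq\ker\xi\subseteq U$ and the Zariski density of a full-rank lattice.

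For the transcendence statement you take a somewhat different route. The paper invokes the Picard--Vessiot dimension formula directly: since $\dim\ker\xi=2g$ and $F=K(\log_\sigma,\partial\log_\sigma)$ is generated over $K=\bC(S)(\fP,\partial\fP)$ by $2g$ functions, all of them must be algebraically independent, so in particular the $g$ coordinates of $\log_\sigma$ are algebraically independent over $K$ and a fortiori over $\bC(S)(\omega_1,\dots,\omega_{2g})$. You instead prove algebraic independence of the $g$ coordinates of $\log_\sigma$ by a direct computation with the translation action of $\mathbb G_a^{2g}$ and the full rank of the period matrix, never touching $\partial\log_\sigma$. Your argument is more self-contained (it only uses that $\ker\xi$ acts by $K$-automorphisms, not the full dimension formula) at the cost of a short explicit calculation; the paper's version is shorter but presupposes the reader is fluent with the Picard--Vessiot correspondence. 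The ``technical obstacle'' you flag --- identifying $\ker\xi$ with the Galois group of $F/K$ --- is exactly the normal-subgroup case of the Picard--Vessiot Galois correspondence, and the paper takes it for granted too.
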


\begin{proof}
    By \Cref{mainThm2} the kernel of the morphism $\xi|_{\theta_\sigma(\pi_1(S(\bC),s))}$ is isomorphic to $\mathbb{G}_a^{2g}(\bZ)$, which implies the first part of the statement. In particular, the transcendence degree of the extension generated by the coordinates of $\log_\sigma$ and their directional derivatives over $\bC(S)(\omega_1, \ldots, \omega_{2g}, \partial\omega_1, \ldots, \partial \omega_{2g})$ is $2g$, where we are fixing a tangent vector field $\partial$. The conclusion follows.\\
\end{proof}

\bibliographystyle{plainurl}

\bibliography{biblio}

\begin{thebibliography}{10}

\bibitem{Ach}
P.~Achinger.
\newblock {\em $K(\pi, 1)$ {S}paces in {A}lgebraic {G}eometry}.
\newblock ProQuest LLC, Ann Arbor, MI, 2015.
\newblock Thesis (Ph.D.)--University of California, Berkeley.

\bibitem{André}
Y.~Andr\'e.
\newblock {M}umford-{T}ate groups of mixed {Hodge} structures and the theorem
  of the fixed part.
\newblock {\em Compositio Mathematica}, 82(1):1--24, 1992.

\bibitem{And_mon}
Y~Andr\'{e}.
\newblock On relative integral monodromy of abelian logarithms and normal
  functions, 2024.
\newblock Preprint.

\bibitem{ACZ}
Y.~Andr\'{e}, P.~Corvaja, and U.~Zannier.
\newblock The {B}etti map associated to a section of an abelian scheme.
\newblock {\em Invent. Math.}, 222(1):161--202, 2020.
\newblock With an appendix by Z. Gao.

\bibitem{André2}
Y.~André.
\newblock Groupes de galois motivques et p\'eriodes, 2016.
\newblock \href {https://arxiv.org/abs/1606.03714} {\path{arXiv:1606.03714}}.

\bibitem{Ber2}
D.~Bertrand.
\newblock {\em Extensions de D-modules et groupes de Galois differentiels},
  pages 125--141.
\newblock Springer Berlin Heidelberg, Berlin, Heidelberg, 1990.

\bibitem{Bor}
A.~Borel.
\newblock Density properties for certain subgroups of semi-simple groups
  without compact components.
\newblock {\em Annals of Mathematics}, 72(1):179--188, 1960.

\bibitem{BorHC}
A.~Borel and Harish-Chandra.
\newblock Arithmetic subgroups of algebraic groups.
\newblock {\em Annals of Mathematics}, 75(3):485--535, 1962.

\bibitem{Con}
B.~Conrad.
\newblock Chow's {$K/k$}-image and {$K/k$}-trace, and the {L}ang-{N}\'{e}ron
  theorem.
\newblock {\em Enseign. Math. (2)}, 52(1-2):37--108, 2006.

\bibitem{Cor_Sil}
G.~Cornell and J.~Silverman.
\newblock {\em {Arithmetic Geometry}}.
\newblock Springer, 1986.

\bibitem{CMZ}
P.~Corvaja, D.~Masser, and U.~Zannier.
\newblock Torsion hypersurfaces on abelian schemes and {B}etti coordinates.
\newblock {\em Math. Ann.}, 371(3-4):1013--1045, 2018.

\bibitem{PONCELET}
P.~Corvaja and U.~Zannier.
\newblock {\em Poncelet games, Manin's kernel theorem, the Betti map and
  torsion in group schemes}.
\newblock (Monograph in progress, draft sent privately).

\bibitem{CZ}
P.~Corvaja and U.~Zannier.
\newblock Unramified sections of the {L}egendre scheme and modular forms.
\newblock {\em J. Geom. Phys.}, 166:Paper No. 104266, 26, 2021.

\bibitem{CH}
T.~Crespo and Z.~Hajto.
\newblock {\em Algebraic Groups and Differential Galois Theory}.
\newblock Graduate studies in mathematics. American Mathematical Society, 2011.

\bibitem{DK}
J.~F. Davis and P.~Kirk.
\newblock {\em Lecture notes in algebraic topology}, volume~35 of {\em Graduate
  Studies in Mathematics}.
\newblock American Mathematical Society, Providence, RI, 2001.

\bibitem{DGH}
V.~Dimitrov, Z.~Gao, and P.~Habegger.
\newblock Uniformity in {M}ordell-{L}ang for curves.
\newblock {\em Ann. of Math. (2)}, 194(1):237--298, 2021.

\bibitem{DT}
P.~Dolce and F.~Tropeano.
\newblock Finite translation orbits on double families of abelian varieties,
  2024.
\newblock \href {https://arxiv.org/abs/2401.07015} {\path{arXiv:2401.07015}}.

\bibitem{Ell}
J.S. Ellenberg.
\newblock Superstrong approximation for monodromy groups.
\newblock In {\em Thin groups and superstrong approximation}, volume~61 of {\em
  Math. Sci. Res. Inst. Publ.}, pages 51--71. Cambridge Univ. Press, Cambridge,
  2014.

\bibitem{For}
F.~Forstneri\v{c}.
\newblock {\em Stein manifolds and holomorphic mappings}, volume~56 of {\em
  Ergebnisse der Mathematik und ihrer Grenzgebiete. 3. Folge. A Series of
  Modern Surveys in Mathematics}.
\newblock Springer, second edition, 2017.

\bibitem{LiuLor}
O.~Gabber, Q.~Liu, and D.~Lorenzini.
\newblock Hypersurfaces in projective schemes and a moving lemma.
\newblock {\em Duke Math. J.}, 164(7):1187--1270, 2015.

\bibitem{GH2}
Z.~Gao and P.~Habegger.
\newblock Heights in families of abelian varieties and the geometric bogomolov
  conjecture.
\newblock {\em Annals of Mathematics}, 189(2):527--604, 2019.

\bibitem{GHDeg}
Z.~Gao and P.~Habegger.
\newblock Degeneracy loci in the universal family of abelian varieties, 2023.
\newblock \href {https://arxiv.org/abs/2303.04936} {\path{arXiv:2303.04936}}.

\bibitem{GH}
Z.~Gao and P.~Habegger.
\newblock The relative {M}anin-{M}umford conjecture, 2023.
\newblock \href {https://arxiv.org/abs/2303.05045} {\path{arXiv:2303.05045}}.

\bibitem{GorMac}
M.~Goresky and R.~MacPherson.
\newblock {\em Stratified {M}orse theory}, volume~14 of {\em Ergebnisse der
  Mathematik und ihrer Grenzgebiete (3)}.
\newblock Springer-Verlag, Berlin, 1988.

\bibitem{GorWed}
Ulrich G\"{o}rtz and Torsten Wedhorn.
\newblock {\em Algebraic geometry {I}. {S}chemes---with examples and
  exercises}.
\newblock Springer Studium Mathematik---Master. Springer Spektrum, Wiesbaden,
  [2020] \copyright 2020.
\newblock Second edition [of 2675155].
\newblock \href {https://doi.org/10.1007/978-3-658-30733-2}
  {\path{doi:10.1007/978-3-658-30733-2}}.

\bibitem{SGA1}
A.~Grothendieck and Raynaud M.
\newblock {\em Revêtements étales et groupe fondamental ({SGA} 1). Augmenté
  de deux exposés de Mme M. Raynaud}, volume 224 of {\em Lecture Notes in
  Mathematics}.
\newblock Springer-Verlag, Berlin, Heidelberg, 1971.
\newblock Séminaire de Géométrie Algébrique du Bois Marie 1960--1961.

\bibitem{Gro}
L.~C. Grove.
\newblock {\em Classical groups and geometric algebra}, volume~39 of {\em
  Graduate Studies in Mathematics}.
\newblock American Mathematical Society, Providence, RI, 2002.

\bibitem{Hart}
R.~Hartshorne.
\newblock {\em {Algebraic Geometry}}, volume~52 of {\em Graduate Texts in
  Mathematics}.
\newblock Springer, 1977.

\bibitem{kol}
J.~Koll\'{a}r.
\newblock {\em Shafarevich maps and automorphic forms}.
\newblock M. B. Porter Lectures. Princeton University Press, Princeton, NJ,
  1995.

\bibitem{LanNér}
S.~Lang and A.~Néron.
\newblock Rational points of abelian varieties over function fields.
\newblock {\em American Journal of Mathematics}, 81(1):95--118, 1959.

\bibitem{Lee2}
J.~M. Lee.
\newblock {\em {Introduction to Topological Manifolds}}.
\newblock Graduate Texts in Mathematics. Springer, 1 edition, May 2000.

\bibitem{Man}
Yu.~I. Manin.
\newblock {R}ational points on algebraic curves over function fields.
\newblock {\em Izv. Akad. Nauk SSSR Ser. Mat.}, 53(2):447--448, 1989.

\bibitem{MokTo}
N.~Mok and W.-K. To.
\newblock Eigensections on kuga families of abelian varieties and finiteness of
  their mordell-weil groups.
\newblock {\em Journal für die reine und angewandte Mathematik}, 444:29--78,
  1993.

\bibitem{NORI}
M.~V. Nori.
\newblock A nonarithmetic monodromy group.
\newblock {\em C. R. Acad. Sci. Paris S\'{e}r. I Math.}, 302(2):71--72, 1986.

\bibitem{PiWi}
J.~Pila and A.~Wilkie.
\newblock The rational points of a definable set.
\newblock {\em Duke Mathematical Journal}, 133, 06 2006.

\bibitem{PZ}
J.~Pila and U.~Zannier.
\newblock Rational points in periodic analytic sets and the {M}anin-{M}umford
  conjecture.
\newblock {\em Atti Accad. Naz. Lincei Rend. Lincei Mat. Appl.},
  19(2):149--162, 2008.

\bibitem{SARNAK}
P.~Sarnak.
\newblock Notes on thin matrix groups.
\newblock In {\em Thin groups and superstrong approximation}, volume~61 of {\em
  Math. Sci. Res. Inst. Publ.}, pages 343--362. Cambridge Univ. Press,
  Cambridge, 2014.

\bibitem{Shio}
T.~Shioda.
\newblock {On elliptic modular surfaces}.
\newblock {\em Journal of the Mathematical Society of Japan}, 24(1):20 -- 59,
  1972.

\bibitem{stacks-project}
The {Stacks project authors}.
\newblock The stacks project.
\newblock \url{https://stacks.math.columbia.edu}, 2025.

\bibitem{T2}
F.~Tropeano.
\newblock Monodromy of elliptic logarithms: some topological methods and
  effective results, 2024.
\newblock \href {https://arxiv.org/abs/2402.07741} {\path{arXiv:2402.07741}}.

\bibitem{T1}
F.~Tropeano.
\newblock Monodromy of double elliptic logarithms.
\newblock {\em Rend. Sem. Mat. Univ. Padova (to appear)}, 2025.

\bibitem{XY}
J.~Xie and X.~Yuan.
\newblock Partial heights, entire curves, and the geometric {B}ombieri-{L}ang
  conjecture, 2023.
\newblock \href {https://arxiv.org/abs/2305.14789} {\path{arXiv:2305.14789}}.

\bibitem{YZ}
X.~Yuan and S.-W. Zhang.
\newblock Adelic line bundles on quasi-projective varieties, 2023.
\newblock \href {https://arxiv.org/abs/2105.13587} {\path{arXiv:2105.13587}}.

\bibitem{Zbook}
U.~Zannier.
\newblock {\em Some problems of unlikely intersections in arithmetic and
  geometry}, volume 181 of {\em Annals of Mathematics Studies}.
\newblock Princeton University Press, Princeton, NJ, 2012.
\newblock With appendixes by David Masser.

\end{thebibliography}
 
\Addresses

\end{document}